\documentclass[a4paper,oneside,10pt]{article}%
\usepackage{amsmath}
\usepackage{amsfonts}
\usepackage{amssymb}
\usepackage{graphicx}
\usepackage{color}
\usepackage{authblk}
\usepackage{comment}
\usepackage[square,numbers,sort&compress]{natbib}%
\providecommand{\U}[1]{\protect\rule{.1in}{.1in}}

\newtheorem{theorem}{Theorem}[section]

\newtheorem{corollary}[theorem]{Corollary}

\newtheorem{definition}[theorem]{Definition}

\newtheorem{lemma}[theorem]{Lemma}

\newtheorem{proposition}[theorem]{Proposition}
\newtheorem{remark}[theorem]{Remark}

\newenvironment{proof}[1][Proof]{\noindent \textbf{#1.} }{\  $\Box$}
\numberwithin{equation}{section}

\begin{document}

\title{Stochastic wave equations driven by  space-time $G$-white noise under sublinear expectation}

\author[a, b]{Xiaojun Ji\thanks{Corresponding author}}

\affil[a]{Research Center for Mathematics and Interdisciplinary Sciences, Shandong University, Qingdao, Shandong 266237, P. R. China.}
\affil[b]{Frontiers Science Center for Nonlinear Expectations (Ministry of Education), Shandong University, Qingdao, Shandong 266237, P. R. China.}

\affil[ ]{\textit {Email: jixj@sdu.edu.cn}}

\renewcommand*{\Affilfont}{\small\it}

\date{}

\maketitle

{
\begin{abstract}
In this paper, we study one-dimensional semilinear stochastic wave
equations driven by multiplicative space-time $G$-white noise under
sublinear expectation. We establish the existence and uniqueness of
global mild solutions. By developing the quadratic variation structure
of space-time $G$-white noise and establishing a
Burkholder-Davis-Gundy inequality for the associated stochastic
integrals, we derive higher-order moment estimates and prove the
quasi-sure H\"older continuity of the solutions. We further show that
the mild solution satisfies the corresponding weak formulation.

\medskip
\medskip

\noindent\textbf{Key words}. Sublinear expectation, space-time $G$-white noise, stochastic wave equations, quadratic variation, Burkholder-Davis-Gundy inequality.

\medskip
\medskip

\noindent{\textbf{MSC(2020)}. }  
60G65, 60H15,  60H40

\end{abstract}}

\section{Introduction}

Stochastic wave equations are a  fundamental class of stochastic partial
differential equations (SPDEs) of hyperbolic type. They arise naturally when wave
propagation is subject to random forcing or fluctuations and provide
mathematical models for a variety of randomly perturbed wave phenomena, such as  randomly forced elastic systems and wave propagation in
physical and geophysical media \cite{Dalang09}. 

The theory of stochastic wave equations has been developed
extensively within the classical probability framework. Walsh's
martingale measure theory \cite{Walsh} provides a fundamental random field
approach to SPDEs driven by space-time white noise. Dalang
\cite{Dalang} later extended this stochastic integration theory to a broad class
of spatially homogeneous noises and  introduced an integrability condition
that has become central to the random field theory of stochastic heat
and wave equations.  
For stochastic wave equations,
existence, uniqueness, moment estimates, and sample path regularity have
subsequently been studied in various settings; see, among others, Conus and Dalang \cite{ConusDalang},
Dalang and Frangos \cite{DalangFrangos}, Dalang and Mueller
\cite{DalangMueller}, and Sanz-Sol\'e
 and Sarr\`a \cite{Sanz}.
For broader accounts of stochastic heat and wave equations and related
SPDEs, we refer to \cite{Dalang09,DS,DaP-Z,Hai,LR15} and the references therein.

The classical theory is formulated under a prescribed probability
measure, and therefore assumes that the probabilistic law governing the
random environment is known. However, in many applications,  the
underlying probability distribution, and in particular its variance or
volatility, may itself be uncertain. Such uncertainty is conceptually
different from ordinary randomness and is often referred to as
Knightian uncertainty or model uncertainty \cite{Knight,EJ13}. A
probabilistic formulation based on a single reference measure is not well suited to capturing such distributional ambiguity.

To address uncertainty in probability distributions, Peng developed the
nonlinear  expectation theory, and, in particular, introduced
$G$-expectation and $G$-Brownian motion 
\cite{P2007,Peng08,P2019}. The $G$-expectation can be represented by
 a non-dominated family of probability measures and provides a
stochastic calculus in which volatility uncertainty is incorporated into
the probabilistic model itself. Since its
introduction, stochastic analysis under $G$-expectation has developed
rapidly. In particular, stochastic differential equations driven by
$G$-Brownian motion have been studied in \cite{Gao09,HJL},
while backward stochastic differential equations under $G$-expectation
were developed in \cite{HJPS,P2019}. Compared with the finite-dimensional
theory, however, stochastic partial differential equations under
sublinear expectation require additional random field and
infinite-dimensional stochastic integration tools and remain much less
developed.

A fundamental step toward such a theory was taken by Ji and Peng
\cite{JP20}, who introduced $G$-Gaussian random fields, spatial
$G$-white noise, and space-time $G$-white noise, together with the
corresponding stochastic integration framework under sublinear
expectation. 
More recently, Ji and Peng
\cite{JP25} studied  stochastic heat equations driven by
multiplicative space-time $G$-white noises. They established existence and uniqueness of mild solutions, developed
a stochastic Fubini theorem under sublinear expectation, and proved that
the mild solution satisfies the corresponding weak formulation. 
These results provide a natural probabilistic and
analytic foundation for the study of further classes of SPDEs under
distributional uncertainty.

The hyperbolic counterpart, namely stochastic wave equations driven by
space-time $G$-white noise, presents  additional difficulties.
 Unlike the
heat kernel, the wave kernel does not possess a comparable smoothing
effect, 
and its characteristic structure requires estimates that are specific to
hyperbolic equations. 
Moreover, nonlinear expectation prevents one from
freely using several classical arguments based on a single probability
measure. Suitable moment inequalities for stochastic
integrals, deterministic and stochastic convolution estimates, and
quasi-sure continuity criteria are required in order to develop a well-posedness and regularity
theory for stochastic wave equations under sublinear expectation. 

Motivated by these considerations, in this paper we develop a
random field theory for the semilinear stochastic wave equation driven by
space-time $G$-white noise  on a bounded interval:
\begin{equation*}
\begin{cases}
\partial_{tt}u(t,x)-\partial_{xx}u(t,x)
=
b(u(t,x))+a(u(t,x))\dot{\mathbf W}(t,x),
& t>0,\ 0<x<L,\\
  u(0,x)=u_0(x),& 0\le x\le L,\\
  \frac{\partial}{\partial t}u(0,x)=v_0(x),& 0<x<L,\\
  u(t,0)=u(t,L)=0,&t\ge 0,
\end{cases}
\end{equation*}
where $\dot{\mathbf W}$ denotes the generalized derivative of a
space-time $G$-white noise and $a,b:\mathbb R\to\mathbb R$ are globally
Lipschitz continuous. We refer to this equation as the
$G$-stochastic wave equation.

Our first objective is to develop the stochastic integral estimates
needed for the wave equation. Building on the quadratic variation theory
of $G$-Brownian motion \cite{P2019}, we study the quadratic and mutual
variations of spatial projections of space-time $G$-white noise. Using
the representation of the sublinear expectation by a family of
probability measures, we identify the corresponding classical quadratic
variations and brackets under each representing measure. This leads to a
Burkholder-Davis-Gundy (BDG) inequality for stochastic integrals with
respect to space-time $G$-white noise, and in particular to the
fixed-time $\mathbf L_G^p$ estimates needed for the subsequent
 analysis.

We next establish deterministic and stochastic convolution estimates
associated with the Dirichlet wave kernel on $(0,L)$.
These estimates
show that the corresponding convolution operators preserve
$\mathbf S_G^p$ and provide the increment bounds required for both
well-posedness and regularity. 
Under suitable assumptions on the initial  conditions, 
we prove, by Picard iteration on arbitrary finite horizons, the
existence and uniqueness of a global mild solution to the $G$-stochastic wave equation. 
Uniqueness ensures consistency of the finite-horizon solutions and hence
yields a global solution on $[0,\infty)\times[0,L]$.
Moreover, using deterministic Fubini arguments together with the
stochastic Fubini theorem under sublinear expectation from \cite{JP25},
we show that every mild solution also satisfies the corresponding weak
formulation.

Finally, we investigate the path regularity of the solution. If, in
addition,
\[
u_0\in C_0^\gamma([0,L]),\qquad
v_0\in L^\infty(0,L),\qquad
0<\gamma\le1,
\]
we derive higher-order moment estimates for the joint space-time
increments of the solution. A Kolmogorov continuity criterion under capacity then yields a
modification whose sample paths are jointly continuous quasi-surely and
locally H\"older continuous on every finite space-time cylinder.
More precisely, if $\gamma\ge1/2$, every H\"older
exponent $\alpha<1/2$ is admissible, whereas if
$0<\gamma<1/2$, the exponent $\gamma$ is attained. Thus the solution
exhibits the characteristic $1/2$ regularity threshold of the
one-dimensional stochastic wave convolution in the framework of
sublinear expectation.

The remainder of the paper is organized as follows.
Section~2 recalls the basic theory of sublinear expectations, space-time
$G$-white noise, and the associated stochastic integrals.
Section~3 studies the quadratic variation structure of spatial
projections of space-time $G$-white noise and establishes the BDG
inequality. Section~4 develops the Dirichlet wave kernel estimates and
proves the existence and uniqueness of global mild solutions, the
mild-to-weak implication, moment estimates, and quasi-sure H\"older
regularity.

\section{Preliminaries}

In this section, we recall some basic notions and results in the sublinear
expectation theory. More details can be found in {\cite{DHP, JP20, JP25, P2007,  P2019}} and references therein.

\subsection{Basic notions of sublinear expectations}

Let $\Omega$ be a  nonempty set and let $\mathcal{H}$ be a linear space of
real-valued functions on $\Omega$ such that if $X_1$, $\cdots$, $X_d \in
\mathcal{H}$, then $\varphi (X_1, X_2, \cdots, X_d) \in \mathcal{H}$ for each
$\varphi \in C_{l.Lip} (\mathbb{R}^d)$, where $C_{l.Lip} (\mathbb{R}^d)$
denotes the linear space of local Lipschitz functions satisfying that for any $x, y \in
\mathbb{R}^d$,
\begin{equation*}
 | \varphi (x) - \varphi (y) | \leq C_{\varphi}  (1 + |x|^m + |y|^m)  |x -
   y|, \text{for some } C_{\varphi} > 0, m \in \mathbb{N} \text{ depending on }
   \varphi .
\end{equation*}
$\mathcal{H}$ is considered as the space of random variables. For each $X_i \in \mathcal{H}$, $1 \leq i \leq d$, $X = (X_1,
\cdots, X_d)$ is called a
$d$-dimensional random vector, denoted by $X \in \mathcal{H}^d$.

\begin{definition}
  \label{sublinear expectation}A {sublinear expectation}
  $\hat{\mathbb{E}}$ on $\mathcal{H}$ is a functional $\hat{\mathbb{E}} :
  \mathcal{H} \rightarrow \mathbb{R}$ satisfying the following properties: for
  each $X, Y \in \mathcal{H}$,
  \begin{itemize}
    \item[(i)] {Monotonicity:}\quad$\hat{\mathbb{E}} [X] \geq
    \hat{\mathbb{E}} [Y]  \text{ if } X \geq Y$;
    
    \item[(ii)] {Constant preserving:}\quad$\hat{\mathbb{E}} [c] = c
    \text{ for } c \in \mathbb{R}$; 
    
    \item[(iii)] {Sub-additivity:}\quad$\hat{\mathbb{E}}  [X + Y]
    \leq \hat{\mathbb{E}} [X] + \hat{\mathbb{E}} [Y]$;
    
    \item[(iv)] {Positive homogeneity:}\quad$\hat{\mathbb{E}}
    [\lambda X] = \lambda \hat{\mathbb{E}} [X]  \text{ for } \lambda \geq 0$.
  \end{itemize}
  The triplet $(\Omega, \mathcal{H}, \hat{\mathbb{E}})$ is called a
  {sublinear expectation space}. In particular, if only the properties (i) and (ii) are satisfied,
  then $\hat{\mathbb{E}}$ is called a {nonlinear expectation} and  $(\Omega, \mathcal{H}, \hat{\mathbb{E}})$ is called a
  {nonlinear expectation space}.
\end{definition}

Throughout this paper, we are mainly interested in sublinear expectations satisfying
the following regular property:
\begin{itemize}
  \item[(v)] {Regularity:}
  {\hspace{0.17em}}{\hspace{0.17em}}{\hspace{0.17em}} If $\{X_i \}_{i =
  1}^{\infty} \subset \mathcal{H}$ satisfies that $X_i (\omega) \downarrow 0$
  as $i \rightarrow \infty$, for each $\omega \in \Omega$, then
  \begin{equation*}
   \lim_{i \rightarrow \infty}  \hat{\mathbb{E}} [X_i] = 0. 
  \end{equation*}
\end{itemize}

Within the framework of  sublinear expectations,  important definitions, including the distribution and independence of random variables,  are introduced as follows.
\begin{definition}
 For a $d$-dimensional random vector $X$ on the sublinear
  expectation space $(\Omega, \mathcal{H}, \hat{\mathbb{E}})$, its distribution is the 
functional  $\mathbb{F}_X$ on $C_{l.Lip} (\mathbb{R}^d)$ defined by
\begin{equation*} \mathbb{F}_X [\varphi] := \hat{\mathbb{E}} [\varphi (X)], \text{ for
   } \varphi \in C_{l.Lip} (\mathbb{R}^d) .
\end{equation*}
\end{definition}
\begin{definition}
  Two $d$-dimensional random vectors $X_1$ and $X_2$,  defined on sublinear
  expectation spaces $(\Omega_1, \mathcal{H}_1, \hat{\mathbb{E}}_1)$ and
  $(\Omega_2, \mathcal{H}_2, \hat{\mathbb{E}}_2)$, respectively, are
  called {identically distributed}, denoted by $X_1 \overset{d}{=}
  X_2$, if $\mathbb{F}_{X_1} =\mathbb{F}_{X_2}$, i.e.,
  \begin{equation*} \hat{\mathbb{E}}_1 [\varphi (X_1)] = \hat{\mathbb{E}}_2 [\varphi (X_2)], 
     \text{ for } \varphi \in C_{l.Lip} (\mathbb{R}^d) .
  \end{equation*}
\end{definition}

\begin{definition}
 A $d_1$-dimensional random vector $Y$ is said to be
  {independent} of another $d_2$-dimensional random vector $X$ on $(\Omega, \mathcal{H}, \hat{\mathbb{E}})$ if, for each test function $\varphi \in
  C_{l.Lip} (\mathbb{R}^{d_1 + d_2})$,
  \begin{equation*} 
  \hat{\mathbb{E}} [\varphi (X, Y)] = \hat{\mathbb{E}} [\hat{\mathbb{E}}
     [\varphi (x, Y)]_{x = X}] . 
  \end{equation*}
  In particular, let $\bar{X}$ and $X$ be two $d$-dimensional random vectors on $(\Omega,
  \mathcal{H}, \hat{\mathbb{E}})$. $\bar{X}$ is called an {independent copy} of
  $X$ if $\bar{X} \overset{d}{=} X$ and $\bar{X}$ is independent of $X$.
\end{definition}

\begin{remark}
We emphasize that independence under a sublinear expectation is in general not symmetric, namely,  ``$Y$ is independent of $X$'' does not necessarily imply
  that ``$X$ is independent of $Y$'' (See  Peng {\cite{P2019}}).
\end{remark}

\subsection{$G$-white noises under sublinear expectations}
Within the  sublinear expectation framework, essential random fields,  including  $G$-Brownian motion and $G$-white noises, are introduced in \cite{P2007, JP20, P2019}.

\begin{definition}
  A $d$-dimensional random vector $X$ on a sublinear
  expectation space $(\Omega, \mathcal{H}, \hat{\mathbb{E}})$ is called
  (centralized) {$G$-normally distributed} if
  \begin{equation*}
   aX + b \bar{X} \overset{d}{=} \sqrt{a^2 + b^2} X, \text{ for } a, b \geq 0,
  \end{equation*}
  where $\bar{X}$ is an independent copy of $X$.
\end{definition}

\begin{definition}
  A $d$-dimensional random vector $\eta$ on a sublinear
  expectation space $(\Omega, \mathcal{H}, \hat{\mathbb{E}})$ is called
  maximally distributed if there exists a bounded, closed, and convex subset $\Lambda\subset \mathbb{R}^d$ such that
    \begin{equation*}
   \hat{\mathbb{E}}[\varphi(\eta)]=\max_{y\in\Lambda}\varphi(y), \text{ for } \varphi\in C_{l.Lip}(\mathbb{R}^d).
  \end{equation*}
  We denote $\eta\sim\mathcal{M}(\Lambda)$.
\end{definition}

\begin{definition}
  On a sublinear expectation space
  $(\Omega, \mathcal{H}, \hat{\mathbb{E}})$, a $d$-dimensional stochastic process $(B_t)_{t \geq 0}$ with $B_t\in \mathcal{H}^d$ for $t\geq0$  is called a {(symmetric) $G$-Brownian motion} if
  the following properties are satisfied:
  \begin{itemize}
    \item[(i)] $B_0 = 0 ;$
    
    \item[(ii)] For each $t, s \geq 0,$ $B_{t + s} - B_t$ and $B_s$ are
    identically distributed and $B_{t + s} - B_t$ is independent of $(B_{t_1}, \cdots, B_{t_n})$ for any $n \in \mathbb{N}$ and $0 \leq t_1 \leq \cdots \leq t_n \leq t.$
    
    \item[(iii)] $\hat{\mathbb{E}} [B_t] = \hat{\mathbb{E}} [- B_t] = 0$ and $\lim_{t
    \downarrow 0} \hat{\mathbb{E}} [| B_t |^3] t^{- 1} = 0$.
  \end{itemize}
\end{definition}

\begin{definition}
  Let $\Gamma$ be any parameter set. A family of random variables $(W_{\gamma})_{\gamma \in \Gamma}\subset\mathcal{H}$ on $(\Omega, \mathcal{H}, \hat{\mathbb{E}})$ is
  called a $1$-dimensional {$G$-Gaussian random field} if for each $n\in\mathbb{N}$ and
  $\gamma_1, \cdots, \gamma_n \in
  \Gamma$,  $n$-dimensional random vector
  $ (W_{\gamma_1}, \cdots, W_{\gamma_n}) \in
  \mathcal{H}^n$ is $G$-normally distributed.
\end{definition}

 It is worth noting that for a $G$-Brownian motion $(B_t)_{t\geq0}$, $B_t$ is $G$-normally distributed for each given $t>0$. However, the finite-dimensional distributions of $(B_t)_{t\geq0}$ are generally not $G$-normally distributed, implying that $G$-Brownian motion is no longer a  $G$-Gaussian random field.

Let $0 \leq \underline{\sigma}^2 \leq \overline{\sigma}^2<\infty$ be any given numbers, and define  $\mathcal{B}_0 (\mathbb{R}) := \{A
  \in \mathcal{B}(\mathbb{R}) : \lambda_A < \infty\}$, where $\lambda_A$
  denotes the Lebesgue measure of $A \in \mathcal{B} (\mathbb{R})$.  

\begin{definition}
  \label{Gwhitenoise}A
  $1$-dimensional  $G$-Gaussian random field $\mathbb{W} = (\mathbb{W}_A)_{A
  \in \mathcal{B}_0 (\mathbb{R})}$ on $(\Omega, \mathcal{H}, \hat{\mathbb{E}})$ is called a {spatial $G$-white
    noise} with parameters $\underline{\sigma}^2$ and   $\overline{\sigma}^2$ if
  \begin{itemize}
    \item[(i)] For each $A \in \mathcal{B}_0 (\mathbb{R})$,
    $\hat{\mathbb{E}} [\mathbb{W}_A^2] = \overline{\sigma}^2 \lambda_A$, $-
    \hat{\mathbb{E}} [- \mathbb{W}_A^2] = \underline{\sigma}^2 \lambda_A$;
    
    \item[(ii)] For each $A_1, A_2 \in \mathcal{B}_0 (\mathbb{R})$ such that $A_1
    \cap A_2 = \emptyset$,
    \begin{align*}
      & \hat{\mathbb{E}} [\mathbb{W}_{A_1} \mathbb{W}_{A_2}] =
      \hat{\mathbb{E}} [- \mathbb{W}_{A_1} \mathbb{W}_{A_2}] = 0, \\
      & \hat{\mathbb{E}} [(\mathbb{W}_{A_1 \cup A_2} - \mathbb{W}_{A_1} -
      \mathbb{W}_{A_2})^2] = 0. 
    \end{align*}
  \end{itemize}
\end{definition}

Throughout this paper, we consistently fix two parameters  $\underline{\sigma}^2$ and   $\overline{\sigma}^2$. 
Building upon the $G$-Brownian motion and  spatial $G$-white noise, we introduce a  spatio-temporal random field, which is called the space-time $G$-white noise.
Set the parameter set as
\begin{equation*}
 \Gamma = \{ [s, t) \times A : 0 \leq s \leq t < \infty, A \in \mathcal{B}_0
   (\mathbb{R}) \} . 
\end{equation*}
\begin{definition}
  \label{tx-Gwhitenoise}A random field $\{ \mathbf{W} ([s, t) \times
  A)\}_{([s, t) \times A) \in \Gamma}$ on a sublinear expectation space
  $(\Omega, \mathcal{H}, \hat{\mathbb{E}})$ is called a $1$-dimensional
  {space-time $G$-white noise} if it satisfies the following conditions:
  \begin{itemize}
    \item[(i)] For each fixed $[s, t)$, the spatial random field $\{ \mathbf{W} ([s,
    t) \times A)\}_{A \in \mathcal{B}_0 (\mathbb{R})}$ is a $1$-dimensional
    spatial $G$-white noise and has the same family of finite-dimensional
    distributions as $(\sqrt{t - s} \mathbb{W}_A)_{A \in \mathcal{B}_0
    (\mathbb{R})}$, where $(\mathbb{W}_A)_{A \in \mathcal{B}_0 (\mathbb{R})}$ is a
  $1$-dimensional spatial $G$-white noise with parameters $\underline{\sigma}^2$ and $\overline{\sigma}^2$;
    
    \item[(ii)] For every $0\leq r \leq s \leq t$ and $A \in \mathcal{B}_0
    (\mathbb{R})$, $\mathbf{W} ([r, t) \times A)= \mathbf{W} ([r, s) \times A) + \mathbf{W} ([s, t) \times
    A) $;
    
    \item[(iii)] For every $0\leq s_i \leq t_i \leq s\leq t$ and $A, A_i \in \mathcal{B}_0 (\mathbb{R})$,  $i = 1,
    \cdots, n$, $n\in\mathbb{N}$, $\mathbf{W} ([s, t) \times A)$ is independent of $(\mathbf{W}
    ([s_1, t_1) \times A_1), \cdots, \mathbf{W} ([s_n, t_n) \times A_n))$.
  \end{itemize}
  
\end{definition}

We work on the canonical realization of the space-time $G$-white noise constructed in {\cite{JP20,JP25}}. For completeness, we briefly recall the corresponding topological structure.

Set 
\begin{align*}
  \Gamma_1 = &\{ [s, t) \times (x,y): \forall0 \leq s \leq t < \infty, -\infty<x\leq y<\infty\}.
\end{align*}
Let $C_0(\Gamma_1;\mathbb{R})$ denote the space of continuous real-valued functions on $\Gamma_1$ vanishing on degenerate rectangles. Define
\begin{equation*} 
\Omega = \{ \omega \in C_0(\Gamma_1;\mathbb{R}): \omega (\gamma_1\cup \gamma_2) =
   \omega (\gamma_1) + \omega (\gamma_2),\text{ for } \gamma_1, \gamma_2\in\Gamma_1 \text{ with } \gamma_1\cup\gamma_2\in \Gamma_1 \text{ and } \gamma_1\cap \gamma_2=\emptyset\}
\end{equation*}
  endowed with the distance
  \begin{equation*}
   d(\omega,\tilde{\omega})=\sum_{k=1}^\infty2^{-k}\left[\left(\max_{\substack{0\leq s\leq t\leq k\\ -k\leq x\leq y\leq k}}|\omega([s,t)\times(x,y))-\tilde{\omega}([s,t)\times(x,y))|\right)\wedge1\right],\forall\omega,\tilde{\omega}\in\Omega.
   \end{equation*}

For each $\omega \in \Omega$, the canonical process
$(\mathbf{W}_{\gamma})_{\gamma \in \Gamma_1}$ is given by
$\mathbf{W}_\gamma(\omega) = \omega (\gamma)$, for $\gamma\in\Gamma_1$. 
For  $T \geq 0$, set $\mathcal{F}_T = \sigma \{ \mathbf{W} ([s, t) \times
(x,y)): 0 \leq s \leq t \leq T, -\infty<x\leq y<\infty\}$,
$\mathcal{F}= \bigvee_{T \geq 0} \mathcal{F}_T$, and
\begin{align*}
  L_{ip} (\mathcal{F}_T) := & \{\varphi (\mathbf{W} ([s_1, t_1) \times (x_1,y_1)),
  \cdots, \mathbf{W} ([s_n, t_n) \times (x_n,y_n))) : \text{for any } n \in
  \mathbb{N}, \\
  &\hspace{0.17em} \hspace{0.17em} \hspace{0.17em} \varphi \in C_{l.Lip}(\mathbb{R}^n), 0\leq s_i \leq t_i \leq T,-\infty<x_i\leq y_i<\infty  \text{ for } i = 1, \cdots, n  \},\\
  L_{ip} (\mathcal{F}) := & \cup_{n = 1}^{\infty} L_{ip} (\mathcal{F}_n) .
\end{align*}

In \cite{JP20}, we construct a sublinear expectation $\hat{\mathbb{E}}$ and a conditional expectation $\hat{\mathbb{E}}[\cdot|\mathcal{F}_t]$ on
$L_{ip} (\mathcal{F})$, under which the canonical process $(\mathbf{W}_\gamma)_{\gamma\in\Gamma_1}$ is a
$1$-dimensional space-time $G$-white noise. For each $p \geq 1$, $T \geq
0$, denote by $\mathbf{L}_G^p (\mathcal{F}_T)$ (resp., $\mathbf{L}_G^p
(\mathcal{F})$) the completion of $L_{ip} (\mathcal{F}_T)$ (resp., $L_{ip}
(\mathcal{F})$) under the norm $\|\cdot\|_p := (\hat{\mathbb{E}} [| \cdot
|^p])^{1 / p}$. Then the sublinear expectation and conditional expectation can be extended to the complete space $\mathbf{L}_G^p(\mathcal{F})$.

The canonical field can be extended from spatial intervals to every $A\in\mathcal{B}_0(\mathbb{R})$ by approximation in Lebesgue measure. In particular, if $\{A_n\}_{n=1}^\infty$ is a sequence of finite unions of bounded open intervals satisfying $\lim_{n\rightarrow \infty }\lambda_{A_n\triangle A}=0$, then  $\{\mathbf{W}([s,t)\times A_n)\}_{n=1}^\infty$ is a Cauchy sequence in $\mathbf{L}_G^p(\mathcal{F}_t)$ and denote the limit by $\mathbf{W}([s,t)\times A)\in \mathbf{L}_G^p(\mathcal{F}_t)$. Specifically, this limit is independent of the chosen approximating sequence. Thus, we construct a spatio-temporal $G$-white noise $\mathbf{W}=(\mathbf{W}_\gamma)_{\gamma\in\Gamma}$ indexed by $\Gamma$   under $\hat{\mathbb{E}}$. 

 For the sublinear expectation $\hat{\mathbb{E}}$, we have the following robust representation theorem which can be used to connect with the capacity theory.

\begin{theorem}[See \cite{DHP, HP, JP20}]\label{rep-thm}
There exists a weakly compact set $\mathcal{P}$ of probability measures on $(\Omega,\mathcal{B}(\Omega))$ such that
  \begin{equation*} 
  \hat{\mathbb{E}} [X] = \sup_{P \in \mathcal{P}} E_P [X],  \text{ for all }
     X \in \mathbf{L}_G^1(\mathcal{F}),
  \end{equation*}
 where $\mathcal{B}(\Omega)$ is the $\sigma$-algebra generated by all open sets.  $\mathcal{P}$ is called the  family of
  probability measures that represents $\hat{\mathbb{E}}$.
\end{theorem}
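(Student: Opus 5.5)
We will prove Theorem~\ref{rep-thm} in the standard way for $G$-expectation, combining a linear representation of the sublinear functional with a tightness argument on the Polish path space $\Omega$.

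\textbf{Step 1 (linear representation).} Since $\hat{\mathbb{E}}$ is sublinear on the linear space $L_{ip}(\mathcal{F})$, the Hahn--Banach theorem applies. It yields a family $\{E_\theta\}_{\theta\in\Theta}$ of linear, monotone, constant-preserving functionals with $\hat{\mathbb{E}}[X]=\max_\theta E_\theta[X]$ for all $X\in L_{ip}(\mathcal{F})$. Each $E_\theta$ is dominated by $\hat{\mathbb{E}}$.

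\textbf{Step 2 (countable additivity).} We need each $E_\theta$ to be given by a probability measure on $(\Omega,\mathcal{B}(\Omega))$. Fix finitely many rectangles $\gamma_1,\dots,\gamma_n\in\Gamma_1$. The map $\varphi\mapsto E_\theta[\varphi(\mathbf{W}_{\gamma_1},\dots,\mathbf{W}_{\gamma_n})]$ is then a positive linear functional on $C_{l.Lip}(\mathbb{R}^n)$. It is dominated by the finite-dimensional distribution of $\hat{\mathbb{E}}$, which has finite moments of all orders because the noise is $G$-normal. By the Daniell--Stone/Riesz theorem, this functional is integration against a probability measure $\mu^\theta_{\gamma_1,\dots,\gamma_n}$ on $\mathbb{R}^n$. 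These measures are consistent, so Kolmogorov's extension theorem gives a measure $P_\theta$ on $\mathbb{R}^{\Gamma_1}$.

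\textbf{Step 3 (tightness on $\Omega$; main obstacle).} The key difficulty is to show that $P_\theta$ is carried by $\Omega$, i.e.\ by continuous additive set functions, and that the family $\{P_\theta\}$ is tight there. We use the uniform moment bound
\[
E_\theta\big[|\mathbf{W}([s,t)\times(x,y))-\mathbf{W}([s',t')\times(x',y'))|^{2m}\big]\le \hat{\mathbb{E}}[\cdots]\le C_m\,\overline{\sigma}^{2m}\,\lambda(\triangle)^m,
\]
where $\triangle$ is the symmetric difference of the two rectangles. This bound follows from $G$-normality and properties (i)--(ii) of Definitions~\ref{Gwhitenoise} and \ref{tx-Gwhitenoise}. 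The area of the symmetric difference is controlled linearly by the distance between the corner points $(s,t,x,y)\in\mathbb{R}^4$. Taking $m$ large, the multiparameter Kolmogorov--Chentsov criterion applies with constants independent of $\theta$. It yields a continuous modification, and additivity on disjoint unions holds almost surely for rational rectangles, hence for all rectangles by continuity. The same bound gives uniform H\"older-norm estimates on each compact set $[0,k]^2\times[-k,k]^2$. Arzel\`a--Ascoli then shows that $\{P_\theta\}$ is tight for the metric $d$.

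\textbf{Step 4 (closure and extension).} Let $\mathcal{P}$ be the weak closure of $\{P_\theta\}$, which is weakly compact by Prokhorov's theorem. For $X=\varphi(\mathbf{W}_{\gamma_1},\dots)$ with $\varphi$ bounded Lipschitz, $X$ is continuous on $\Omega$, so $\sup_{P\in\mathcal{P}}E_P[X]=\hat{\mathbb{E}}[X]$. For $\varphi$ of polynomial growth, we pass to the limit by truncation, using the uniform moment bounds. Finally, both sides are continuous for the norm $\|\cdot\|_1$: for $\hat{\mathbb{E}}$ this holds by definition, and for the supremum because $E_P|X|\le\hat{\mathbb{E}}|X|$. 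Extending by density gives the identity on $\mathbf{L}_G^1(\mathcal{F})$.

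The same argument also shows that the representation can be taken through the $G$-Brownian constructions of \cite{DHP,JP20}, as cited.
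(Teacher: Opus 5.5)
Your proposal is correct and follows the same route as the cited sources: the paper does not prove this theorem itself but quotes it from \cite{DHP,HP,JP20}. Your scheme (dominated linear expectations via Hahn--Banach, Daniell--Stone and Kolmogorov extension on cylinder functions, a uniform four-parameter Kolmogorov--Chentsov bound giving continuous additive versions and tightness on $\Omega$, Prokhorov, then density) is the Hu--Peng construction that \cite{JP20} carries over to space-time $G$-white noise, with the standard understanding that $E_P[X]$ for $X$ in the completion $\mathbf{L}_G^1(\mathcal F)$ is defined by the continuous extension coming from $E_P|X|\le\hat{\mathbb{E}}|X|$.
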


Let $\mathcal{P}$ be the weakly compact set that represents $\hat{\mathbb{E}}$.
For this set $\mathcal{P}$, the related capacity can be defined by
\begin{equation*} 
c (A) := \sup_{P \in \mathcal{P}} P (A), \text{ for }A \in \mathcal{B}(\Omega).
\end{equation*}
A set $A \in \mathcal{B}(\Omega)$ is called polar if $c (A) = 0$. A
property holds ``quasi-surely'' (q.s.) if it holds outside a polar set. In the
following, we do not distinguish two random variables $X$ and $Y$ if $X =Y$ q.s.

Let $ L^0(\Omega)$ denote the space of all $\mathcal{B}(\Omega)$-measurable real functions. We can define the related upper expectation for $X\in L^0(\Omega)$, making the following definition meaningful,
\begin{equation*}
\bar{\mathbb{E}}[X]:=\sup_{P\in\mathcal{P}}E_P[X], \text{ for } X\in L^0(\Omega).
\end{equation*}
Then $\hat{\mathbb{E}}=\bar{\mathbb{E}}$ on $L_{ip}({\mathcal{F}})$ and the completions of $L_{ip}({\mathcal{F}})$ under norms $(\hat{\mathbb{E}} [| \cdot
|^p])^{1 / p}$ and $(\bar{\mathbb{E}} [| \cdot
|^p])^{1 / p}$ are the same.

\begin{definition}
A random variable $X$ on $\Omega$ is called quasi-continuous if
for each $ \epsilon> 0$, there exists an open set $O$ with $c(O) <\epsilon$ such that $X|_{O^c}$ is continuous.

Furthermore, we say  $X$ has a quasi-continuous version if there exists a quasi-continuous
random variable $Y$ such that $ X = Y$ q.s.
\end{definition}

By Theorem 25 in Denis et al. \cite{DHP} and Proposition 4.4 in Hu and Peng \cite{HP}, we have the following pathwise characterization of $\mathbf{L}_G^p(\mathcal{F})$,
\begin{equation*}
\mathbf{L}_G^p(\mathcal{F})=\left\{X\in L^0(\Omega): X \text{ has a quasi-continuous version and }\lim_{N\rightarrow\infty}\bar{\mathbb{E}}[|X|^p\mathbf{1}_{\{|X|>N\}}]=0 \right\}.
\end{equation*}

In particular, if $X= Y$  in $\mathbf{L}_G^p(\mathcal{F})$, then $X = Y$ q.s. For $ X\in \mathbf{L}_G^p(\mathcal{F})$ and $\{X_n\}_{n=1}^\infty\subset\mathbf{L}_G^p(\mathcal{F})$, we say that $\{X_n\}_{n=1}^\infty$ converges to $X$ in $\mathbf{L}_G^p(\mathcal{F})$, denoted by $X=\mathbb{L}^p-\lim_{n\rightarrow\infty}X_n$, if 
\begin{equation*}
\lim_{n\rightarrow\infty}\|X_n-X\|_{\mathbf{L}_G^p}=\lim_{n\rightarrow\infty}\hat{\mathbb{E}}[|X_n-X|^p]^{1/p} = 0.
\end{equation*}

\subsection{Stochastic integrals with respect to space-time $G$-white noises}
For each given $p\geq1$, $T>0$,
let $\mathbf{M} ^{p, 0} ([0, T] \times \mathbb{R})$ be the collection of
simple random fields of the form:
\begin{equation}
  \eta (t, x ; \omega) := \sum_{i = 0}^{n - 1} \sum_{j = 1}^m X_{ij} (\omega) 
    \textbf{1}_{[t_i, t_{i + 1})} (t)\textbf{1}_{A_j} (x), \label{simple-func}
\end{equation}
where $X_{ij} \in \mathbf{L}_G^p (\mathcal{F}_{t_i})$, $i = 0, \cdots, n -
1$, $j = 1, \cdots, m$, $0 = t_0 < t_1 < \cdots < t_n = T$, and $\{A_j \}_{j =
1}^m \subset \mathcal{B}_0 (\mathbb{R})$ is a mutually disjoint sequence. The Bochner integral and stochastic integral for $\eta\in\mathbf{M} ^{p, 0} ([0, T] \times \mathbb{R})$ can be defined as follows:
\begin{align}
\int_0^T\int_{\mathbb{R}}\eta (t, x)dxdt&:=\sum_{i = 0}^{n - 1} \sum_{j = 1}^m X_{ij} (t_{i+1}-t_i)\lambda_{A_j},\label{bointeg}\\
\mathcal{I}_T(\eta):=\int_0^T\int_{\mathbb{R}}\eta (t, x)\mathbf{W} (dt,
    dx)&:=\sum_{i = 0}^{n - 1} \sum_{j = 1}^m X_{ij} \mathbf{W} ([t_i,t_{i+1})\times A_j).\label{stinteg}
\end{align}
These integrals (\ref{bointeg}) and (\ref{stinteg}) are linear mappings from $\mathbf{M} ^{p, 0} ([0, T] \times \mathbb{R})$ to $\mathbf{L}_G^p(\mathcal{F}_T)$. 

Set
\begin{align*} 
&\|\eta\|_{\mathbf{M}^p} := \left( \hat{\mathbb{E}} \left[ \int_0^T
   \int_{\mathbb{R}} |\eta (t, x) |^p dxdt \right] \right)^{\frac{1}{p}},\\ 
   &\|\eta\|_{\mathbf{H}^p} := \left( \hat{\mathbb{E}} \left[ \left(\int_0^T
   \int_{\mathbb{R}} |\eta (t, x) |^2 dxdt\right)^{\frac{p}{2}} \right] \right)^{\frac{1}{p}},\quad p\geq2,\\
   &\|\eta\|_{\mathbf{S}^p} := \sup_{t \in [0, T]} \sup_{x \in \mathbb{R}}
   (\hat{\mathbb{E}} [| \eta (t, x) |^p])^{\frac{1}{p}}, 
\end{align*}
and denote by $\mathbf{M}_G^p ([0, T] \times \mathbb{R})$, $\mathbf{H}_G^p ([0, T] \times \mathbb{R})$ and  $\mathbf{S}_G^p
([0, T] \times \mathbb{R})$ the completions of $\mathbf{M} ^{p, 0} ([0, T]
\times \mathbb{R})$ under the norm $\| \cdot \|_{\mathbf{M}^p}$, $\| \cdot \|_{\mathbf{H}^p}$ and $\|
\cdot \|_{\mathbf{S}^p}$, respectively. Then $\mathbf{H}_G^p([0, T] \times \mathbb{R})\subset\mathbf{H}_G^2([0, T] \times \mathbb{R})=\mathbf{M}_G^2([0, T] \times \mathbb{R})$ for $p\geq2$, and the integral (\ref{bointeg}) (resp., (\ref{stinteg})) can be continuously extended to the complete space $\mathbf{M}_G^1 ([0, T] \times \mathbb{R})$ (resp., $\mathbf{M}_G^2([0, T] \times \mathbb{R})$).  

Specifically, we mention that for the $\mathbf S^p$-norm, we adopt the convention that a simple
random field of the form \eqref{simple-func} takes at $t=T$ the same
value as on the last interval $[t_{n-1},T)$. This convention is
irrelevant for the $\mathbf M^p$-norm and $\mathbf H^p$-norm and for the
deterministic and stochastic integrals, since the single time point
$t=T$ has zero Lebesgue measure.

For the  stochastic integrals, we have the following important and useful properties.

\begin{lemma}[See Ji and Peng \cite{JP20,JP25}]\label{nle-1}
  For each $0 \leq s \leq r \leq T$, $\xi\in\mathbf{L}_G^2(\mathcal{F}_s)$, $\eta \in
  \mathbf{M}^2_G ([0, T] \times \mathbb{R})$, and $A, \bar{A} \in
  \mathcal{B}_0 (\mathbb{R})$ such that $A \cap \bar{A} = \emptyset$, we have
  \begin{itemize}
    \item[(i)] $\hat{\mathbb{E}} \left[ \xi\mathbf{W} ([s,r)\times A) \right] = 0$; 

    \item[(ii)] $\hat{\mathbb{E}} \left[ \pm \xi\mathbf{W} ([s,r)\times A)\mathbf{W} ([s,r)\times \bar{A}) \right] = 0$;
    
    \item[(iii)]  $\hat{\mathbb{E}} \left[ \pm \xi \left( \int_s^r \int_{\mathbb{R}} \eta (t, x) \mathbf{W} (dt, dx)
    \right) \right] = 0$;
    \item[(iv)] $\hat{\mathbb{E}} \left[ \pm \left( \int_0^s \int_{A} \eta (t, x)
    \mathbf{W} (dt, dx) \right) \left( \int_0^s \int_{\bar{A}} \eta (t, x) \mathbf{W}
    (dt, dx) \right) \right] = 0$;
    \item[(v)] $\hat{\mathbb{E}} \left[ \left|\int_0^T \int_{\mathbb{R}} \eta (t, x) \mathbf{W} (dt,
    dx) \right|^2\right] \leq \overline{\sigma}^2\hat{\mathbb{E}} \left[ \int_0^T \int_{\mathbb{R}} |\eta (t, x)|^2 dxdt \right]$.
  \end{itemize}
\end{lemma}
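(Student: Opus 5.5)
The plan is to prove all five items first for simple integrands $\eta\in\mathbf{M}^{2,0}([0,T]\times\mathbb{R})$ and for $\xi\in L_{ip}(\mathcal{F}_s)$. Each item is then extended by density, since both sides are continuous in the relevant norms. For $\xi\in\mathbf{L}_G^2$, the Cauchy--Schwarz inequality under $\hat{\mathbb{E}}$ gives $|\hat{\mathbb{E}}[X]-\hat{\mathbb{E}}[Y]|\le\hat{\mathbb{E}}[|X-Y|]$. Combined with the $\mathbf{L}_G^2$ bound in (v), this controls the error terms.

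For (i), approximate $A$ by finite unions of intervals, so that $\xi\in L_{ip}(\mathcal{F}_s)$ and $\mathbf{W}([s,r)\times A)$ is a function of increments after time $s$. By Definition~\ref{tx-Gwhitenoise}(iii), $\mathbf{W}([s,r)\times A)$ is independent of the variables defining $\xi$. Hence
\[
\hat{\mathbb{E}}[\xi\mathbf{W}([s,r)\times A)]=\hat{\mathbb{E}}\bigl[\hat{\mathbb{E}}[x\mathbf{W}([s,r)\times A)]_{x=\xi}\bigr].
\]
The inner term is $x^+\hat{\mathbb{E}}[\mathbf{W}]+x^-\hat{\mathbb{E}}[-\mathbf{W}]=0$, because a $G$-normal variable is symmetric with zero mean.

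For (ii), the same independence applies to the pair $(\mathbf{W}([s,r)\times A),\mathbf{W}([s,r)\times\bar A))$. It reduces the claim to $\hat{\mathbb{E}}[\pm\mathbf{W}_A\mathbf{W}_{\bar A}]=0$, which is Definition~\ref{Gwhitenoise}(ii) after the scaling in Definition~\ref{tx-Gwhitenoise}(i).

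For (iii), write the simple integral $\int_s^r\int\eta\,d\mathbf{W}$ as $\sum_{i,j}X_{ij}\mathbf{W}([t_i,t_{i+1})\times A_j)$, using a partition refined to include $s$. Then $\pm\xi X_{ij}\in\mathbf{L}_G^1(\mathcal{F}_{t_i})$. Peel off terms backwards in time with the conditional expectation $\hat{\mathbb{E}}[\cdot|\mathcal{F}_{t_{n-1}}]$. By the argument of (i), applied conditionally, the last term has zero conditional expectation in both signs. This gives $\hat{\mathbb{E}}[Y+Z]=\hat{\mathbb{E}}[Y]$ whenever $\hat{\mathbb{E}}[\pm Z|\mathcal{F}_{t}]=0$ and $Y$ is $\mathcal{F}_t$-measurable. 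Iterating removes all terms. A technical point is that $\xi X_{ij}$ is only in $\mathbf{L}_G^1$; I will handle this by truncating to bounded Lipschitz approximations.

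For (iv), with simple $\eta$, refine so that every $A_j$ lies inside $A$ or inside $\bar A$. Expand the product into diagonal time terms and off-diagonal time terms. An off-diagonal term, with $i<k$, has the form $(\text{$\mathcal{F}_{t_k}$-measurable})\times\mathbf{W}([t_k,t_{k+1})\times A_l)$, and vanishes by the conditional version of (i). A diagonal term has the form $X_{ij}X_{il}\mathbf{W}(\cdot\times A_j)\mathbf{W}(\cdot\times A_l)$ with $A_j\cap A_l=\emptyset$, and vanishes by the conditional version of (ii). Again I apply the backward induction with conditional expectations, using that adding terms with zero conditional mean in both signs does not change $\hat{\mathbb{E}}$.

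For (v), expand the square similarly. Cross terms vanish as above. The square terms give
\[
\hat{\mathbb{E}}\bigl[X_{ij}^2\mathbf{W}([t_i,t_{i+1})\times A_j)^2\big|\mathcal{F}_{t_i}\bigr]=X_{ij}^2\overline{\sigma}^2(t_{i+1}-t_i)\lambda_{A_j},
\]
by independence and Definition~\ref{Gwhitenoise}(i). Sub-additivity then yields the $\le$ inequality.

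The main obstacle is justifying the conditional forms of (i) and (ii) for general $\mathbf{L}_G^p(\mathcal{F}_{t_i})$ coefficients, together with the tower-type identity $\hat{\mathbb{E}}[Y+Z]=\hat{\mathbb{E}}[Y]$. I would do this first on $L_{ip}$ via the construction of $\hat{\mathbb{E}}[\cdot|\mathcal{F}_t]$ in \cite{JP20}, and then pass to the limit.
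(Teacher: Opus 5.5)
The paper does not prove this lemma; it quotes it from \cite{JP20,JP25}, so I judge your proposal on its own terms. Your overall scheme (cylinder coefficients and simple integrands first, then density, with independence of future increments) works for (i)--(iv) and for the cross terms in (v). The final step of (v), however, fails as written.

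After the cross terms are removed you have $\hat{\mathbb{E}}[|\mathcal{I}_T(\eta)|^2]=\hat{\mathbb{E}}[\sum_{i,j}X_{ij}^2\mathbf{W}_{ij}^2]$, where $\mathbf{W}_{ij}:=\mathbf{W}([t_i,t_{i+1})\times A_j)$ and $\Delta_{ij}:=(t_{i+1}-t_i)\lambda_{A_j}$. Sub-additivity together with your conditional identity only gives
\[
\hat{\mathbb{E}}\Bigl[\sum_{i,j}X_{ij}^2\mathbf{W}_{ij}^2\Bigr]\le\overline{\sigma}^2\sum_{i,j}\Delta_{ij}\hat{\mathbb{E}}[X_{ij}^2].
\]
Statement (v) asserts the bound $\overline{\sigma}^2\hat{\mathbb{E}}[\sum_{i,j}X_{ij}^2\Delta_{ij}]$, which is smaller and in general strictly so. For example, take $\eta=(\cos Y\,\mathbf{1}_{A_1}+\sin Y\,\mathbf{1}_{A_2})\mathbf{1}_{[t_1,t_2)}$ with $Y=\mathbf{W}([0,t_1)\times B)$, $\lambda_{A_1}=\lambda_{A_2}$ and $\underline{\sigma}<\overline{\sigma}$; then $\hat{\mathbb{E}}[\cos^2Y]+\hat{\mathbb{E}}[\sin^2Y]>1$. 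So your argument yields neither (v) nor continuity of $\eta\mapsto\int\eta\,d\mathbf{W}$ in the $\mathbf{M}^2$-norm, and your density arguments rely on that continuity.

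Conditioning on $\mathcal{F}_{t_i}$ does not cure this. After the last step the leftover terms $\overline{\sigma}^2X_{n-1,j}^2\Delta_{n-1,j}$ are not $\mathcal{F}_{t_{n-2}}$-measurable, so you face the same splitting again. The missing idea is centering: write
\[
\sum_{i,j}X_{ij}^2\mathbf{W}_{ij}^2
=\overline{\sigma}^2\sum_{i,j}X_{ij}^2\Delta_{ij}
+\sum_{i,j}X_{ij}^2\bigl(\mathbf{W}_{ij}^2-\overline{\sigma}^2\Delta_{ij}\bigr).
\]
Keep the first sum inside a single $\hat{\mathbb{E}}$ and apply sub-additivity only to the centered terms. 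By independence and $x^2\ge0$, each of these satisfies $\hat{\mathbb{E}}[X_{ij}^2(\mathbf{W}_{ij}^2-\overline{\sigma}^2\Delta_{ij})]=\hat{\mathbb{E}}[X_{ij}^2(\hat{\mathbb{E}}[\mathbf{W}_{ij}^2]-\overline{\sigma}^2\Delta_{ij})]=0$, which gives (v). This is the same device the paper uses in the proof of Lemma~\ref{le:Ito-ineq}.

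Two further remarks. First, the obstacle you single out at the end is largely self-imposed. If $\hat{\mathbb{E}}[Z]=\hat{\mathbb{E}}[-Z]=0$, then $\hat{\mathbb{E}}[Y+Z]=\hat{\mathbb{E}}[Y]$ for every integrable $Y$, by two applications of sub-additivity (as in the proof of Lemma~\ref{lem:cross-zero}). This needs no measurability condition on $Y$ and no tower property. Approximate the coefficients $X_{ij}$ in $L_{ip}(\mathcal{F}_{t_i})$, which is dense in $\mathbf{L}_G^2(\mathcal{F}_{t_i})$, instead of truncating. Then every off-diagonal term in (iii)--(v) has the form $\pm\zeta\,\mathbf{W}([t_k,t_{k+1})\times A_l)$ or $\pm\zeta\,\mathbf{W}([t_k,t_{k+1})\times A_j)\mathbf{W}([t_k,t_{k+1})\times A_l)$, with $\zeta$ a cylinder function of increments before $t_k$. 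The unconditional forms of (i) and (ii) remove such terms directly, and all integrability questions disappear.

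Second, (ii) requires the pair $(\mathbf{W}([s,r)\times A),\mathbf{W}([s,r)\times\bar{A}))$ to be jointly independent of the past. Definition~\ref{tx-Gwhitenoise}(iii) as stated covers only a single set, so take the joint independence from the canonical construction in \cite{JP20}, where it holds by construction.
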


Denote by $L^2  ([0, T] \times \mathbb{R})$ the linear space of square-integrable functions on $[0, T] \times \mathbb{R}$, that is,
\begin{equation*}
L^2  ([0, T] \times \mathbb{R})=\left\{h:[0, T] \times \mathbb{R}\rightarrow\mathbb{R} \text{ s.t. } \int_0^T\int_{\mathbb{R}}|h(t,x)|^2dxdt< \infty\right\}.
\end{equation*}
For each $h (t, x) \in
L^2  ([0, T] \times \mathbb{R})$ and $\eta \in\mathbf{S}_G^2([0,T]\times \mathbb{R})$,   we can define the following  stochastic  
integral:
\begin{align*} 
\int_0^T \int_{\mathbb{R}} h (t, x) \eta (t, x) \mathbf{W} (dt, dx) &:=
   \mathbb{L}^2 - \lim_{n \rightarrow \infty} \int_0^T \int_{\mathbb{R}} h
   (t, x) \eta_n  (t, x) \mathbf{W} (dt, dx)\\
   &=\mathbb{L}^2 - \lim_{n \rightarrow \infty}\sum_{i = 0}^{m_n - 1} \sum_{j = 1}^{k_n} X_{ij}^{(n)}  \int_{t^{(n)}_i}^{t^{(n)}_{i + 1}}
  \int_{A^{(n)}_j} h(t, x)\mathbf{W} (dt, dx) , 
\end{align*}
where $\eta_n(t,x)=\sum_{i = 0}^{m_n - 1}\sum_{j = 1}^{k_n} X_{ij}^{(n)}   \textbf{1}_{[t^{(n)}_i, t^{(n)}_{i + 1})} (t)  \textbf{1}_{A^{(n)}_j} (x) \in
\mathbf{M}^{2, 0} ([0, T] \times \mathbb{R})$ satisfies that $\eta_n$ converges to $\eta$ in $\mathbf{S}_G^2([0,T]\times \mathbb{R})$ as $n\rightarrow\infty$.

\begin{lemma}[See Ji and Peng \cite{JP25}]
  \label{Ito ineq}For each $h \in L^2  ([0, T] \times \mathbb{R}), \eta
   \in \mathbf{S}_G^2([0,T]\times \mathbb{R})$, we have
  \begin{align}
    \label{nneq1} &\hat{\mathbb{E}} \left[  \int_0^T \int_{\mathbb{R}}
    h (t, x) \eta (t, x) \mathbf{W} (dt, dx)  \right] =0,\\
    &\label{neq-1} \hat{\mathbb{E}} \left[ \left| \int_0^T \int_{\mathbb{R}}
    h (t, x) \eta (t, x) \mathbf{W} (dt, dx) \right|^2 \right] \leq
    \overline{\sigma}^2 || \eta ||_{\mathbf{S}^2}^2 || h ||_{L^2}^2 .
  \end{align}
\end{lemma}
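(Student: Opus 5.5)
The natural route is by density: prove \eqref{nneq1} and \eqref{neq-1} first for simple $\eta_n\in\mathbf{M}^{2,0}([0,T]\times\mathbb{R})$, and then pass to the $\mathbb{L}^2$-limit that defines the integral. Write the approximating field as
\[
\eta_n=\sum_{i}\sum_j X_{ij}\mathbf 1_{[t_i,t_{i+1})}(t)\mathbf 1_{A_j}(x),
\qquad X_{ij}\in\mathbf L_G^2(\mathcal F_{t_i}),
\]
with the $A_j$ disjoint. Then
\[
\int_0^T\int_{\mathbb R} h\,\eta_n\,\mathbf W(dt,dx)=\sum_{i,j}X_{ij}\,Y_{ij},
\qquad
Y_{ij}:=\int_{t_i}^{t_{i+1}}\int_{A_j}h\,\mathbf W(dt,dx).
\]
Each $Y_{ij}$ is a stochastic integral of the deterministic integrand $h\mathbf 1_{[t_i,t_{i+1})\times A_j}\in \mathbf M_G^2$, so Lemma \ref{nle-1} applies to it.

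\textbf{Mean-zero property \eqref{nneq1}.} For simple $\eta_n$ this follows from Lemma \ref{nle-1}(iii), applied with $\xi=X_{ij}$ and $\eta=h\mathbf 1_{A_j}$, which gives $\hat{\mathbb E}[\pm X_{ij}Y_{ij}]=0$. A sum of terms whose expectation and negative expectation both vanish again has zero sublinear expectation, by sub-additivity in both directions. For general $\eta$ we then pass to the limit, using $|\hat{\mathbb E}[X]-\hat{\mathbb E}[Y]|\le\hat{\mathbb E}[|X-Y|]$.

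\textbf{Second-moment bound \eqref{neq-1}.} First expand the square as
\[
\Big(\sum_{i,j} X_{ij}Y_{ij}\Big)^2
=\sum_{i,j}X_{ij}^2Y_{ij}^2+\sum_{(i,j)\neq(k,l)}X_{ij}X_{kl}Y_{ij}Y_{kl}.
\]
The cross terms fall into two groups.
\begin{itemize}
\item Terms with different time indices $i<k$: the factor $X_{ij}X_{kl}Y_{ij}\in\mathbf L_G^1(\mathcal F_{t_k})$, and $Y_{kl}$ is an integral over $[t_k,t_{k+1})$. Lemma \ref{nle-1}(iii) gives zero with either sign. One needs some integrability care here, handled by truncating the $X$'s and approximating.
\item Terms with the same time index and $j\ne l$: conditionally on $\mathcal F_{t_i}$, one uses Lemma \ref{nle-1}(iv) (suitably shifted in time, via the conditional expectation) with coefficient $X_{ij}X_{il}$ to get zero.
\end{itemize}
Since every cross term has $\hat{\mathbb E}[\pm\cdot]=0$, sub-additivity gives
\[
\hat{\mathbb E}\Big[\Big(\sum_{i,j} X_{ij}Y_{ij}\Big)^2\Big]\le\sum_{i,j}\hat{\mathbb E}[X_{ij}^2Y_{ij}^2].
\]
Next, $Y_{ij}$ is independent of $\mathcal F_{t_i}$. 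This yields
\[
\hat{\mathbb E}[X_{ij}^2Y_{ij}^2]=\hat{\mathbb E}\big[X_{ij}^2\,\hat{\mathbb E}[Y_{ij}^2]\big]\le \hat{\mathbb E}[X_{ij}^2]\,\overline{\sigma}^2\int_{t_i}^{t_{i+1}}\int_{A_j}h^2\,dx\,dt,
\]
where the last step is Lemma \ref{nle-1}(v) for the deterministic integrand. Finally,
\[
\hat{\mathbb E}[X_{ij}^2]\le\sup_{t,x}\hat{\mathbb E}[|\eta_n(t,x)|^2]=\|\eta_n\|^2_{\mathbf S^2},
\]
and summing over $i,j$ gives $\overline\sigma^2\|\eta_n\|^2_{\mathbf S^2}\|h\|^2_{L^2}$.

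\textbf{Passage to the limit.} The simple-case bound, applied to $\eta_n-\eta_m$, shows that the integrals form a Cauchy sequence in $\mathbf L_G^2$. This also shows the definition of the integral is independent of the approximating sequence. The inequality then passes to the limit because $\|\eta_n\|_{\mathbf S^2}\to\|\eta\|_{\mathbf S^2}$.

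The main obstacle is the step $\hat{\mathbb E}[X^2Y^2]=\hat{\mathbb E}[X^2\hat{\mathbb E}[Y^2]]$. Independence is defined only for $C_{l.Lip}$ test functions and $L_{ip}$ variables, while $Y_{ij}$ involves a general $L^2$ kernel $h$. I would first take $h$ a step function, so that $Y_{ij}$ is a linear combination of $\mathbf W$-increments and the $X_{ij}$ can be taken in $L_{ip}$. I would then extend to general $h$ and $X_{ij}$ by $L^2$-approximation combined with truncation, using the continuity estimates above.
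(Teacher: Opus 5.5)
The paper does not prove this lemma; it imports it from \cite{JP25}. So there is no in-paper argument to compare with, and your proposal has to stand on its own. It does.

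\begin{itemize}
\item The cross terms vanish as you say. For $i<k$ this is Lemma~\ref{nle-1}(iii). For $i=k$, $j\neq l$, use Lemma~\ref{nle-1}(iv) with integrand $h\mathbf 1_{[t_i,t_{i+1})}$ and $s=t_{i+1}$, lifted through the conditional expectation.
\item The diagonal identity $\hat{\mathbb E}[X_{ij}^2Y_{ij}^2]=\hat{\mathbb E}[X_{ij}^2]\,\hat{\mathbb E}[Y_{ij}^2]$ is legitimate because $X_{ij}^2\ge 0$.
\item The integrability issues you flag really are only bookkeeping. For instance, $X_{ij}X_{kl}Y_{ij}\in\mathbf L_G^1$ follows by Cauchy--Schwarz once $X_{ij}Y_{ij}\in\mathbf L_G^2$, and that membership is itself a consequence of the independence identity.
\end{itemize}

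You can avoid the expansion, and your ``main obstacle'', by using tools the paper already quotes. Take simple $\eta_n$ and deterministic step functions $h_m\to h$ in $L^2$.
\begin{itemize}
\item After refining partitions, each $h_m\eta_n$ is simple, and $\hat{\mathbb E}[\int_0^T\int_{\mathbb R}(h_m-h_{m'})^2\eta_n^2\,dxdt]\le\max_{i,j}\hat{\mathbb E}[X_{ij}^2]\,\|h_m-h_{m'}\|_{L^2}^2$. Hence $h\eta_n\in\mathbf M_G^2([0,T]\times\mathbb R)$.
\item H\"older's inequality, together with Lemma~\ref{nle-1}(v) for the deterministic integrands $(h_m-h)\mathbf 1_{[t_i,t_{i+1})\times A_j}$, identifies the stochastic integral of $h\eta_n$ with $\sum_{i,j}X_{ij}Y_{ij}$.
\item Lemma~\ref{nle-1}(iii) with $\xi=1$ then gives \eqref{nneq1} for $\eta_n$.
\item Lemma~\ref{nle-1}(v) plus sub-additivity give
\[
\hat{\mathbb E}\Bigl[\Bigl|\sum_{i,j}X_{ij}Y_{ij}\Bigr|^2\Bigr]\le\overline\sigma^2\,\hat{\mathbb E}\Bigl[\sum_{i,j}X_{ij}^2\int_{t_i}^{t_{i+1}}\int_{A_j}h^2\,dxdt\Bigr]\le\overline\sigma^2\|\eta_n\|_{\mathbf S^2}^2\|h\|_{L^2}^2 .
\]
\end{itemize}
After that, your limit passage applies verbatim.

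In effect, your expansion re-derives Lemma~\ref{nle-1}(v) by hand for the particular integrand $h\eta_n$, in the same cross-term style the paper uses in Lemma~\ref{lem:cross-zero}. That makes it self-contained and driven directly by the independence structure. The price is truncation and independence arguments for non-cylinder variables. The shortcut hands all of that to the already established $\mathbf M_G^2$ estimate and needs only H\"older and sub-additivity.
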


{Similarly,  for any given compact set $K\in\mathcal{B}_0(\mathbb{R})$, $h\in L^2([0,T]\times K)$ and $\eta\in\mathbf{S}_G^2([0,T]\times K)$, we can define the related Bochner integral from $\mathbf{S}_G^2([0,T]\times K)$ to $\mathbf{L}_G^2 (\mathcal{F}_T)$ as follows:
 \begin{align} \int_0^T \int_{K} h (t, x) \eta (t, x)  dxdt&:= \mathbb{L}^2 - \lim_{n \rightarrow \infty} \int_0^T \int_{K} h
   (t, x) \eta_n  (t, x) dxdt \label{b-int1}\\
   &=\mathbb{L}^2 - \lim_{n \rightarrow \infty}\sum_{i = 0}^{m_n - 1} \sum_{j = 1}^{k_n} X_{ij}^{(n)}  \int_{t^{(n)}_i}^{t^{(n)}_{i + 1}}
  \int_{A^{(n)}_j} h(t, x)dxdt,\label{b-int2}
 \end{align}
where $\eta_n(t,x)=\sum_{i = 0}^{m_n - 1}\sum_{j = 1}^{k_n} X_{ij}^{(n)}   \textbf{1}_{[t^{(n)}_i, t^{(n)}_{i + 1})} (t)  \textbf{1}_{A^{(n)}_j} (x) \in
\mathbf{M}^{2, 0} ([0, T] \times K)$ satisfies that $\eta_n$ converges to $\eta$ in $\mathbf{S}_G^2([0,T]\times K)$ as $n\rightarrow\infty$.}

\section{Quadratic variation processes and a BDG inequality}
We next develop the quadratic variation theory associated with the space-time $G$-white noise and establish a BDG inequality.

\subsection{Quadratic  and mutual variation processes}
For the space-time $G$-white noise $\{\mathbf{W}([s,t)\times A)\}_{([s,t)\times A)\in\Gamma}$, we now fix the initial time $s=0$, and set 
\begin{equation*}
  \mathbf{W}^A_t :=\mathbf{W}([0,t)\times A), \text{ for } t\geq0, A\in\mathcal{B}_0(\mathbb{R}).
\end{equation*} 
Then for each fixed $A\in\mathcal{B}_0(\mathbb{R})$, by Definition \ref{tx-Gwhitenoise}, it is easy to check that $\mathbf{W}^A=(\mathbf{W}^A_t)_{t\geq0}$ is a $G$-Brownian motion with upper variance $\hat{\mathbb{E}}[|\mathbf{W}^A_t|^2]=\overline{\sigma}^2\lambda_At$ and lower variance $-\hat{\mathbb{E}}[-|\mathbf{W}^A_t|^2]=\underline{\sigma}^2\lambda_At$. Hence, following Peng \cite{P2019}, the quadratic variation process of $\mathbf{W}^A$ can be defined by 
\begin{align}
\langle \mathbf{W}^A\rangle_t&:=\mathbb{L}^2-\lim_{\mu(\pi_t^N)\rightarrow0}\sum_{j=0}^{N-1}(\mathbf{W}^A_{t_{j+1}^N}-\mathbf{W}^A_{t_{j}^N})^2  \nonumber\\
&=(\mathbf{W}^A_t)^2-2\int_0^t\mathbf{W}^A_sd\mathbf{W}^A_s,
\end{align}
where $\pi_t^N=\{t_{0}^N,t_{1}^N,\cdots,t_{N}^N\}$, $N=1,2,\cdots$, denote a sequence of partitions of  $[0,t]$, and $\mu(\pi_t^N):=\max\{|t_{j+1}^N-t^N_j|,j=0, 1,\cdots,N-1\}$. 
By Sections 3.4 and 3.5 in Peng \cite{P2019}, $\langle \mathbf{W}^A\rangle_t$ has the following properties.

\begin{proposition}[See Peng \cite{P2019}]\label{prop:quad-var}
For each $A\in\mathcal{B}_0(\mathbb{R})$, the process
$\langle\mathbf W^A\rangle
=
\bigl(
\langle\mathbf W^A\rangle_t
\bigr)_{t\ge0}$
has quasi-surely continuous and nondecreasing paths, with
$\langle\mathbf W^A\rangle_0=0$.
Moreover, for every $0\le s<t<\infty$, we have
  \begin{itemize}
    \item[(i)]  $\langle \mathbf{W}^A\rangle_t$ is maximally distributed and  $\langle \mathbf{W}^A\rangle_t\sim\mathcal{M}([\underline{\sigma}^2\lambda_At,\overline{\sigma}^2\lambda_At])$;
    \item[(ii)] $\langle \mathbf{W}^A\rangle_t-\langle \mathbf{W}^A\rangle_s\overset{d}{=}\langle \mathbf{W}^A\rangle_{t-s}$;
    \item[(iii)] $\langle \mathbf{W}^A\rangle_s\leq\langle \mathbf{W}^A\rangle_t$;
    \item[(iv)] $\underline{\sigma}^2\lambda_A(t-s)\leq \langle \mathbf{W}^A\rangle_t-\langle \mathbf{W}^A\rangle_s\leq\overline{\sigma}^2\lambda_A(t-s)$;
    \item[(v)] For $0\leq r<u\leq s<t$, $\langle \mathbf{W}^A\rangle_t-\langle \mathbf{W}^A\rangle_s$ is independent of $\langle \mathbf{W}^A\rangle_u-\langle \mathbf{W}^A\rangle_r$.
  \end{itemize}
\end{proposition}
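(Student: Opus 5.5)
The approach is to reduce the proposition to the corresponding results for a one-dimensional $G$-Brownian motion in Sections 3.4–3.5 of Peng \cite{P2019}. The key observation is that, for fixed $A\in\mathcal{B}_0(\mathbb{R})$ with $\lambda_A>0$, the normalized process $B_t:=\mathbf{W}^A_t/\sqrt{\lambda_A}$ is a standard symmetric $G$-Brownian motion with variance interval $[\underline{\sigma}^2,\overline{\sigma}^2]$. When $\lambda_A=0$, condition (i) of Definition \ref{Gwhitenoise} gives $\hat{\mathbb E}[|\mathbf W^A_t|^2]=0$, so $\mathbf W^A\equiv0$ q.s. and every claim is trivial.

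\textbf{Step 1: $B$ is a $G$-Brownian motion.} We verify the axioms of the $G$-Brownian motion definition using Definition \ref{tx-Gwhitenoise}.
\begin{itemize}
\item[(a)] $B_0=0$, since $[0,0)\times A$ is degenerate.
\item[(b)] Stationarity: by additivity (ii), $B_{t+s}-B_t=\mathbf W([t,t+s)\times A)/\sqrt{\lambda_A}$. By (i), this increment is distributed as $\sqrt{s}\,\mathbb W_A/\sqrt{\lambda_A}$, which is also the law of $B_s$.
\item[(c)] Independence of increments from $(B_{t_1},\dots,B_{t_n})$ follows directly from (iii).
\item[(d)] $B_t$ is $G$-normal with $\hat{\mathbb E}[B_t^2]=\overline\sigma^2t$ and $-\hat{\mathbb E}[-B_t^2]=\underline\sigma^2 t$. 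Hence $\hat{\mathbb E}[\pm B_t]=0$ and $\hat{\mathbb E}[|B_t|^3]=Ct^{3/2}$, so $\hat{\mathbb E}[|B_t|^3]t^{-1}\to0$.
\end{itemize}

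\textbf{Step 2: scaling of the quadratic variation.} The Riemann sums defining $\langle\mathbf W^A\rangle_t$ equal $\lambda_A$ times those defining $\langle B\rangle_t$. Taking $\mathbb L^2$-limits gives $\langle\mathbf W^A\rangle_t=\lambda_A\langle B\rangle_t$ in $\mathbf L^2_G$, hence q.s. The same identity holds for the Itô-integral representation, since $\int_0^t\mathbf W^A_s d\mathbf W^A_s=\lambda_A\int_0^t B_sdB_s$.

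\textbf{Step 3: importing Peng's results.} From \cite{P2019}, $\langle B\rangle$ has a q.s. continuous, nondecreasing version with $\langle B\rangle_0=0$. Moreover, $\langle B\rangle_t\sim\mathcal M([\underline\sigma^2t,\overline\sigma^2t])$, its increments are stationary and independent of the past, and $\underline\sigma^2(t-s)\le\langle B\rangle_t-\langle B\rangle_s\le\overline\sigma^2(t-s)$ q.s. Multiplying by $\lambda_A$ gives (i)–(v). For (i), note that $\varphi(\lambda_A y)$ is maximized over $y\in[\underline\sigma^2t,\overline\sigma^2t]$ exactly when $\varphi$ is maximized over the scaled interval. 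Property (iii) is a special case of (iv).

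\textbf{Main obstacle.} The one subtle point is that Peng's results are proved on the canonical $G$-expectation space of a $G$-Brownian motion, whereas here $B$ lives on the white-noise canonical space. The plan is to argue that all statements concern only finite-dimensional laws, together with $\mathbb L^2$-limits of functionals of the increments of $B$. These laws are determined by the $G$-Brownian structure from Step 1, so they transfer. Pathwise continuity and monotonicity follow as in \cite{P2019} by taking the q.s. limit of dyadic sums under each $P\in\mathcal P$ of Theorem \ref{rep-thm}, using the q.s. bounds in (iv) and a Kolmogorov-type argument.
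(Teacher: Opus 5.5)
Your proposal is correct and follows essentially the paper's route: the paper simply notes, from Definition~\ref{tx-Gwhitenoise}, that $\mathbf W^A$ is itself a $G$-Brownian motion with variance interval $[\underline\sigma^2\lambda_A,\overline\sigma^2\lambda_A]$ and then imports Sections 3.4--3.5 of Peng \cite{P2019}, so your normalization by $\sqrt{\lambda_A}$ is only cosmetic. Your transfer argument from Peng's canonical space to the white-noise space, via finite-dimensional laws and $\mathbb{L}^2$-limits, fills in a point the paper leaves implicit; for the continuous nondecreasing version, the q.s.\ bounds in (iv) over countably many rational pairs already give a Lipschitz nondecreasing version directly, so no Kolmogorov-type argument is needed.
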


For $A,B\in\mathcal B_0(\mathbb R)$, we next consider the mutual
quadratic variation between the spatial projections $
\mathbf W^A$ and
$\mathbf W^B$.
Let
\[
\pi=\{0=t_0<t_1<\cdots<t_N=t\}
\]
be any deterministic partition of $[0,t]$, and define the cross-variation sum
\begin{equation}
\label{eq:cross-sum}
C_t^{\pi}(A,B)
:=
\sum_{k=0}^{N-1}(\mathbf W^A_{t_{k+1}}-\mathbf W^A_{t_k})(\mathbf W^B_{t_{k+1}}-\mathbf W^B_{t_k}).
\end{equation}

\begin{lemma}
\label{lem:cross-zero}
Let $A,B\in\mathcal B_0(\mathbb R)$ satisfy
$A\cap B=\emptyset$. Then, for every $t\geq0$,
\begin{equation}
\label{eq:disjoint-cross-qv}
\mathbb{L}^2-\lim_{\mu(\pi)\to0}
C_t^\pi(A,B)
=
0.
\end{equation}
\end{lemma}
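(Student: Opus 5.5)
Write $\Delta_k^A:=\mathbf W^A_{t_{k+1}}-\mathbf W^A_{t_k}=\mathbf W([t_k,t_{k+1})\times A)$, and similarly $\Delta_k^B$. The plan is to show directly that
\[
\hat{\mathbb E}\bigl[|C_t^\pi(A,B)|^2\bigr]\le C\sum_k (t_{k+1}-t_k)^2\le C\,t\,\mu(\pi)\to0 .
\]

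\textbf{Expanding the square.} We have
\[
|C_t^\pi|^2=\sum_k (\Delta_k^A\Delta_k^B)^2+2\sum_{j<k}\Delta_j^A\Delta_j^B\,\Delta_k^A\Delta_k^B .
\]
By sub-additivity, $\hat{\mathbb E}[|C^\pi_t|^2]$ is bounded by the sum of the upper expectations of the diagonal part and of the cross part. A single sublinear expectation of a sum cannot be split as an equality, so the cross part is handled by a backward induction in $k$ instead.

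\textbf{Diagonal terms.} By Cauchy--Schwarz (valid for $\hat{\mathbb E}$),
\[
\hat{\mathbb E}[(\Delta_k^A)^2(\Delta_k^B)^2]\le \hat{\mathbb E}[(\Delta_k^A)^4]^{1/2}\,\hat{\mathbb E}[(\Delta_k^B)^4]^{1/2}.
\]
By Definition \ref{tx-Gwhitenoise}(i), $\Delta_k^A\overset{d}{=}\sqrt{t_{k+1}-t_k}\,\mathbb W_A$ with $\mathbb W_A$ $G$-normal. Hence $\hat{\mathbb E}[(\Delta_k^A)^4]=(t_{k+1}-t_k)^2\hat{\mathbb E}[\mathbb W_A^4]\le 3\overline\sigma^4\lambda_A^2(t_{k+1}-t_k)^2$, where $\hat{\mathbb E}[\mathbb W_A^4]$ is a finite constant (at most $3\overline\sigma^4\lambda_A^2$). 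The diagonal part is therefore $O(\sum_k(t_{k+1}-t_k)^2)\le O(t\,\mu(\pi))$.

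\textbf{Cross terms.} Define $S_k:=\sum_{j<k}\Delta_j^A\Delta_j^B\in\mathbf L_G^2(\mathcal F_{t_k})$. Then
\[
|C_t^\pi|^2=\sum_k(\Delta_k^A\Delta_k^B)^2+2\sum_k S_k\,\Delta_k^A\Delta_k^B .
\]
Use conditional expectation, peeling off from the last index. For $\xi\in\mathbf L_G^2(\mathcal F_{t_k})$ and any $Y\in\mathbf L^1_G(\mathcal F_{t_k})$ (in the relevant integrable class),
\[
\hat{\mathbb E}[Y+2\xi\Delta_k^A\Delta_k^B\mid\mathcal F_{t_k}]=Y+2\hat{\mathbb E}[\xi\,\Delta_k^A\Delta_k^B\mid\mathcal F_{t_k}].
\]
This last quantity is zero by the independence of Definition \ref{tx-Gwhitenoise}(iii), together with $\hat{\mathbb E}[\pm \mathbf W([s,r)\times A)\mathbf W([s,r)\times B)]=0$ for disjoint $A,B$ (Lemma \ref{nle-1}(ii), or Definition \ref{Gwhitenoise}(ii)). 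That identity applies to either sign of $\xi$, so the cross term contributes zero regardless of the sign of $S_k$. The diagonal terms at indices $\ge k$ are not $\mathcal F_{t_k}$-measurable, however, so the induction needs care. I would run it as follows: condition on $\mathcal F_{t_{N-1}}$, apply sub-additivity of the conditional expectation to split off the last diagonal term, and use the vanishing of the last cross term. Iterating backward then gives
\[
\hat{\mathbb E}[|C_t^\pi|^2]\le\sum_k\hat{\mathbb E}[(\Delta_k^A\Delta_k^B)^2].
\]

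\textbf{Main obstacle.} The key difficulty is justifying this conditional step rigorously on the completed spaces. The plan is to first approximate by $L_{ip}$ variables, using the canonical construction and the density of finite unions of intervals in $A$ and $B$, and then pass to the limit using continuity of $\hat{\mathbb E}$ in $\mathbf L^1_G$. With the bound in hand, $\hat{\mathbb E}[|C^\pi_t|^2]\le 3\overline\sigma^4\lambda_A\lambda_B\,t\,\mu(\pi)\to0$, which is \eqref{eq:disjoint-cross-qv}.
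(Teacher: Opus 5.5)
Your proposal is correct. It uses the same decomposition and the same diagonal estimate as the paper: Cauchy--Schwarz plus the $G$-normal fourth moment $3\overline\sigma^4\lambda_A^2(t_{k+1}-t_k)^2$. The difference lies only in how the cross terms are removed.

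You assert that a sublinear expectation of a sum cannot be split as an equality. You therefore run a backward induction with conditional expectations, which produces your ``main obstacle''. The paper avoids all of this with an exact identity. If $\hat{\mathbb E}[Y]=\hat{\mathbb E}[-Y]=0$, sub-additivity gives $\hat{\mathbb E}[Z+Y]\le\hat{\mathbb E}[Z]$ and $\hat{\mathbb E}[Z]\le\hat{\mathbb E}[Z+Y]+\hat{\mathbb E}[-Y]=\hat{\mathbb E}[Z+Y]$. So such a $Y$ can be dropped from inside $\hat{\mathbb E}$ with equality.

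Each cross term $\Delta_k^A\Delta_k^B\Delta_\ell^A\Delta_\ell^B$ with $k<\ell$ has this property. The same holds for each $S_\ell\,\Delta_\ell^A\Delta_\ell^B$ in your notation. This follows from the unconditional Lemma~\ref{nle-1}(ii), applied with $\xi=\Delta_k^A\Delta_k^B\in\mathbf L_G^2(\mathcal F_{t_\ell})$ (respectively $\xi=S_\ell$). Hence $\hat{\mathbb E}[|C_t^\pi(A,B)|^2]=\hat{\mathbb E}[\sum_k(\Delta_k^A\Delta_k^B)^2]$ holds directly, with no conditioning and no $L_{ip}$-approximation.

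Your induction does go through. After each conditioning step the last diagonal term becomes the constant $\hat{\mathbb E}[(\Delta_k^A\Delta_k^B)^2]$, and what remains has the same form with one fewer index. However, it buys nothing extra. Its conditional steps also need independence of the pair $(\Delta_k^A,\Delta_k^B)$ from $\mathcal F_{t_k}$. Definition~\ref{tx-Gwhitenoise}(iii) as stated treats one increment at a time, so it does not literally give this. The canonical construction does supply it.
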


\begin{proof}Let
$\pi=\{0=t_0<t_1<\cdots<t_N=t\}$
be any deterministic partition of $[0,t]$. For $0\leq k<\ell\leq N-1$,
\[
(\mathbf W^A_{t_{k+1}}-\mathbf W^A_{t_k})(\mathbf W^B_{t_{k+1}}-\mathbf W^B_{t_k})\in\mathbf L_G^2(\mathcal F_{t_\ell}),
\]
and Lemma \ref{nle-1}
implies
\begin{equation}
\label{eq:cross-orthogonality}
\hat{\mathbb E}[\pm(\mathbf W^A_{t_{k+1}}-\mathbf W^A_{t_k})(\mathbf W^B_{t_{k+1}}-\mathbf W^B_{t_k})(\mathbf W^A_{t_{\ell+1}}-\mathbf W^A_{t_\ell})(\mathbf W^B_{t_{\ell+1}}-\mathbf W^B_{t_\ell})]
=
0.
\end{equation}
Recall that if a random variable $Y$ satisfies
$\hat{\mathbb E}[Y]
=
\hat{\mathbb E}[-Y]
=
0,$
then
$\hat{\mathbb E}[X+Y]
=
\hat{\mathbb E}[X]$
for every integrable random variable $X$. Hence all the cross terms in
the expansion of $\bigl(C_t^\pi(A,B)\bigr)^2$ can be removed, and
\begin{equation}
\label{eq:cross-second-moment}
\hat{\mathbb E}
\left[
\left|C_t^\pi(A,B)\right|^2
\right]
=
\hat{\mathbb E}
\left[
\sum_{k=0}^{N-1}|(\mathbf W^A_{t_{k+1}}-\mathbf W^A_{t_k})(\mathbf W^B_{t_{k+1}}-\mathbf W^B_{t_k})|^2
\right]
\leq
\sum_{k=0}^{N-1}
\hat{\mathbb E}[(\mathbf W^A_{t_{k+1}}-\mathbf W^A_{t_k})^2(\mathbf W^B_{t_{k+1}}-\mathbf W^B_{t_k})^2].
\end{equation}
By   H\"older's inequality,
\[
\hat{\mathbb E}
\left[
(\mathbf W^A_{t_{k+1}}-\mathbf W^A_{t_k})^2(\mathbf W^B_{t_{k+1}}-\mathbf W^B_{t_k})^2
\right]
\leq
\hat{\mathbb E}
\left[
|\mathbf W^A_{t_{k+1}}-\mathbf W^A_{t_k}|^4
\right]^{1/2}
\hat{\mathbb E}
\left[
|\mathbf W^B_{t_{k+1}}-\mathbf W^B_{t_k}|^4
\right]^{1/2}.
\]
Since the increments are one-dimensional $G$-normally distributed, we have
\[
\hat{\mathbb E}
\left[
|\mathbf W^A_{t_{k+1}}-\mathbf W^A_{t_k}|^4
\right]
\leq
3\overline{\sigma}^4
\lambda_A^2
(t_{k+1}-t_k)^2,
\]
and similarly for $B$. Consequently,
\[
\hat{\mathbb E}[(\mathbf W^A_{t_{k+1}}-\mathbf W^A_{t_k})^2(\mathbf W^B_{t_{k+1}}-\mathbf W^B_{t_k})^2]
\leq
3\overline{\sigma}^4
\lambda_A\lambda_B
(t_{k+1}-t_k)^2.
\]
It follows from \eqref{eq:cross-second-moment} that
\[
\begin{aligned}
\hat{\mathbb E}
\left[
|C_t^\pi(A,B)|^2
\right]
&\leq
3\overline{\sigma}^4
\lambda_A\lambda_B
\sum_{k=0}^{N-1}(t_{k+1}-t_k)^2
\\
&\leq
3\overline{\sigma}^4
\lambda_A\lambda_Bt\mu(\pi).
\end{aligned}
\]
Letting $\mu(\pi)\to0$ proves
\eqref{eq:disjoint-cross-qv}.
\end{proof}

\begin{corollary}
\label{cor:qv-additivity}
If $A,B\in\mathcal{B}_0(\mathbb{R})$ are disjoint, then, for every $t\geq0$,
\begin{equation}\label{eq:qv-additivity}
\langle\mathbf W^{A\cup B}\rangle_t
=
\langle\mathbf W^A\rangle_t
+
\langle\mathbf W^B\rangle_t.
\end{equation}
\end{corollary}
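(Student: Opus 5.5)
We prove the identity at the level of the approximating sums, pass to the limit in $\mathbf L_G^2$, and then identify limits quasi-surely.

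Fix $t\ge0$ and a deterministic partition $\pi=\{0=t_0<\cdots<t_N=t\}$. By Definition \ref{tx-Gwhitenoise}(i), for each fixed $[t_k,t_{k+1})$ the field $A\mapsto \mathbf W([t_k,t_{k+1})\times A)$ is a spatial $G$-white noise. Since $A\cap B=\emptyset$, Definition \ref{Gwhitenoise}(ii) gives
\[
\hat{\mathbb E}\bigl[(\mathbf W([t_k,t_{k+1})\times (A\cup B))-\mathbf W([t_k,t_{k+1})\times A)-\mathbf W([t_k,t_{k+1})\times B))^2\bigr]=0 .
\]
Hence $\mathbf W^{A\cup B}_{t_{k+1}}-\mathbf W^{A\cup B}_{t_k}=(\mathbf W^{A}_{t_{k+1}}-\mathbf W^{A}_{t_k})+(\mathbf W^{B}_{t_{k+1}}-\mathbf W^{B}_{t_k})$ quasi-surely. (Equivalently, $\mathbf W^{A\cup B}_t=\mathbf W^A_t+\mathbf W^B_t$ follows from additivity over $t$ in Definition \ref{tx-Gwhitenoise}(ii).) Squaring and summing over $k$ yields the exact algebraic identity
\[
\sum_{k=0}^{N-1}(\mathbf W^{A\cup B}_{t_{k+1}}-\mathbf W^{A\cup B}_{t_k})^2
=\sum_{k=0}^{N-1}(\mathbf W^{A}_{t_{k+1}}-\mathbf W^{A}_{t_k})^2+\sum_{k=0}^{N-1}(\mathbf W^{B}_{t_{k+1}}-\mathbf W^{B}_{t_k})^2+2C^\pi_t(A,B),
\]
which holds quasi-surely and hence as an identity in $\mathbf L_G^2(\mathcal F_t)$.

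Now let $\mu(\pi)\to0$. The definition of the quadratic variation of the $G$-Brownian motions $\mathbf W^{A\cup B}$, $\mathbf W^A$ and $\mathbf W^B$ (each a $G$-Brownian motion by the remark preceding Proposition \ref{prop:quad-var}) gives $\mathbb L^2$-convergence of the three quadratic sums to $\langle\mathbf W^{A\cup B}\rangle_t$, $\langle\mathbf W^{A}\rangle_t$ and $\langle\mathbf W^{B}\rangle_t$. Lemma \ref{lem:cross-zero} gives $C^\pi_t(A,B)\to0$ in $\mathbf L_G^2$. By sub-additivity, the norm $\|\cdot\|_2$ satisfies the triangle inequality, so the limit of the right-hand side is the sum of the limits. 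Uniqueness of $\mathbb L^2$-limits then yields \eqref{eq:qv-additivity} in $\mathbf L_G^2$, and therefore quasi-surely.

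The only delicate point is the first step: we must make sure the spatial additivity of $\mathbf W$ holds for the time increments in the sense needed to manipulate the sums. This is exactly Definition \ref{Gwhitenoise}(ii) applied to each time slab, together with the fact that equality in $\mathbf L_G^2$ means equality quasi-surely. Everything else is linear algebra plus Lemma \ref{lem:cross-zero}.
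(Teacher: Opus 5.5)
Your proof is correct and follows the paper's argument exactly: you expand the squared increments of $\mathbf W^{A\cup B}=\mathbf W^A+\mathbf W^B$, pass to the $\mathbf L_G^2$ limit in the three quadratic sums, and eliminate the cross term $2C_t^\pi(A,B)$ with Lemma~\ref{lem:cross-zero}. One small slip: the parenthetical attributing $\mathbf W^{A\cup B}_t=\mathbf W^A_t+\mathbf W^B_t$ to Definition~\ref{tx-Gwhitenoise}(ii) is off, since that item gives additivity in time; the spatial additivity comes, as your main text correctly says, from Definition~\ref{tx-Gwhitenoise}(i) combined with Definition~\ref{Gwhitenoise}(ii).
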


\begin{proof} If $A\cap B=\emptyset$, then
\[
\mathbf W^{A\cup B}_t
=
\mathbf W^A_t+\mathbf W^B_t, \text{ for } t\geq0.
\]
For every partition $\pi$,
\[
\sum_k(\mathbf W^{A\cup B}_{t_{k+1}}-\mathbf W^{A\cup B}_{t_k})^2
=
\sum_k(\mathbf W^A_{t_{k+1}}-\mathbf W^A_{t_k})^2
+
\sum_k(\mathbf W^B_{t_{k+1}}-\mathbf W^B_{t_k})^2
+
2C_t^\pi(A,B).
\]
The first two sums converge in $\mathbf L_G^2$ to the corresponding
$G$-quadratic variations, while the last term converges to zero by
Lemma~\ref{lem:cross-zero}.  Passing to the $\mathbf L_G^2$ limit proves
\eqref{eq:qv-additivity}.
\end{proof}


\subsection{Stochastic integral with respect to the quadratic variation processes}

For every $\eta=\sum_{i=0}^{n-1}\sum_{j=1}^mX_{ij}(\omega)\mathbf{1}_{[t_i,t_{i+1})}(t)\mathbf{1}_{A_j}(x)\in \mathbf{M}^{1,0}([0, T ] \times \mathbb{R})$, the stochastic integral of $\eta$ with respect to $\langle \mathbf{W}\rangle$ can be defined by 
\begin{equation}\label{eq:int-quavar}
\mathcal Q_T(\eta)=\int_{[0,T]\times\mathbb{R}}\eta(t,x)d\langle \mathbf{W}\rangle:=\sum_{i=0}^{n-1}\sum_{j=1}^mX_{ij}\left(\langle\mathbf{W}^{A_j}\rangle_{t_{i+1}}-\langle\mathbf{W}^{A_j}\rangle_{t_{i}}\right),
\end{equation} 
For $0\le t\le T$ we write
\begin{equation}
\label{eq:qv-integral-time}
\mathcal Q_t(\eta)
:=
\mathcal Q_T
\bigl(
\eta\,\mathbf1_{[0,t)}
\bigr).
\end{equation}
Corollary \ref{cor:qv-additivity} implies that this integral  $\mathcal Q_T$ is independent of the particular disjoint spatial representation of $\eta$. Moreover, the integral  $\mathcal Q_T$ is a linear mapping from $\mathbf{M}^{1,0}([0, T ] \times \mathbb{R})$  to $\mathbf{L}^1_G(\mathcal{F}_T)$,  and can be continuously extended to the complete space $\mathbf{M}_G^1([0,T]\times\mathbb{R})$.


\begin{lemma}\label{le:Ito-ineq}
For every $\eta\in \mathbf{M}^{1,0}([0, T ] \times \mathbb{R})$, we have 
\begin{equation}\label{eq:Ito-ineq}
  \hat{\mathbb{E}}\left[\left|\int_{[0,T]\times\mathbb{R}}\eta(t,x)d\langle \mathbf{W}\rangle\right|\right]\leq\overline{\sigma}^2\hat{\mathbb{E}}\left[\int_{\mathbb{R}}\int_0^T\left|\eta(t,x)\right|dtdx\right].
\end{equation}
\end{lemma}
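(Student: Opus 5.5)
The plan is to argue pathwise, using the monotonicity of the sublinear expectation; no martingale or orthogonality structure is needed. Fix a simple field
\[
\eta=\sum_{i=0}^{n-1}\sum_{j=1}^mX_{ij}\mathbf 1_{[t_i,t_{i+1})}(t)\mathbf 1_{A_j}(x),
\]
with $X_{ij}\in\mathbf L_G^1(\mathcal F_{t_i})$ and $\{A_j\}_{j=1}^m$ mutually disjoint. By definition \eqref{eq:int-quavar}, the integral is
\[
\mathcal Q_T(\eta)=\sum_{i,j}X_{ij}\bigl(\langle\mathbf W^{A_j}\rangle_{t_{i+1}}-\langle\mathbf W^{A_j}\rangle_{t_i}\bigr).
\]

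First, Proposition \ref{prop:quad-var}(iv) gives, for each pair $(i,j)$, the quasi-sure bound
\[
0\le \langle\mathbf W^{A_j}\rangle_{t_{i+1}}-\langle\mathbf W^{A_j}\rangle_{t_i}\le \overline{\sigma}^2\lambda_{A_j}(t_{i+1}-t_i).
\]
Since each increment is nonnegative, the triangle inequality applied pointwise yields, quasi-surely,
\[
|\mathcal Q_T(\eta)|\le \sum_{i,j}|X_{ij}|\bigl(\langle\mathbf W^{A_j}\rangle_{t_{i+1}}-\langle\mathbf W^{A_j}\rangle_{t_i}\bigr)\le \overline{\sigma}^2\sum_{i,j}|X_{ij}|\lambda_{A_j}(t_{i+1}-t_i).
\]
Because the $A_j$ are disjoint, the right-hand side is exactly $\overline{\sigma}^2\int_{\mathbb R}\int_0^T|\eta(t,x)|\,dt\,dx$, computed as in \eqref{bointeg} with $|\eta|=\sum_{i,j}|X_{ij}|\mathbf 1_{[t_i,t_{i+1})}\mathbf 1_{A_j}$.

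Finally, take $\hat{\mathbb E}$ of both sides. Monotonicity and positive homogeneity give \eqref{eq:Ito-ineq}.

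The only delicate point is that the bound in Proposition \ref{prop:quad-var}(iv) holds quasi-surely, not identically on $\Omega$, while monotonicity in Definition \ref{sublinear expectation} is stated for pointwise inequalities. I will handle this through the representation in Theorem \ref{rep-thm}. Since $\hat{\mathbb E}=\bar{\mathbb E}=\sup_{P\in\mathcal P}E_P$ on $\mathbf L_G^1$, an inequality that holds outside a polar set holds $P$-a.s. for every $P\in\mathcal P$. Taking $E_P$ and then the supremum over $P$ therefore preserves the inequality. I also need both sides to lie in $\mathbf L_G^1$. This holds because $X_{ij}\in\mathbf L_G^1$ and the quadratic variation increments are bounded elements of $\mathbf L_G^2$, so their products with $X_{ij}$ belong to $\mathbf L_G^1$, for instance by approximating $X_{ij}$ by bounded elements of $L_{ip}$. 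The result for the continuous extension to $\mathbf M_G^1$ then follows by density, although the statement only requires simple $\eta$.
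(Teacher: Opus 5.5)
Your proof is correct, but it takes a different route from the paper.

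\textbf{The paper's route.} The paper does not use the quasi-sure bound in Proposition~\ref{prop:quad-var}(iv) here. By sub-additivity it splits off
$\xi=\sum_{i,j}|X_{ij}|\bigl(|\langle\mathbf W^{A_j}\rangle_{t_{i+1}}-\langle\mathbf W^{A_j}\rangle_{t_i}|-\overline\sigma^2\lambda_{A_j}(t_{i+1}-t_i)\bigr)$.
It then shows $\hat{\mathbb E}[\xi]\le 0$ by conditioning on $\mathcal F_{t_i}$. This uses two facts: the quadratic-variation increment is independent of $\mathcal F_{t_i}$, and its upper mean is exactly $\overline\sigma^2\lambda_{A_j}(t_{i+1}-t_i)$ because it is maximally distributed. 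The argument stays inside the $G$-expectation calculus, in the spirit of Peng's proof for $G$-Brownian motion. It needs only distributional and independence properties, with no capacity or representation arguments.

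\textbf{Your route.} You use the stronger pathwise information (iv), together with $\hat{\mathbb E}=\sup_{P\in\mathcal P}E_P$ on $\mathbf L_G^1$, to turn a quasi-sure inequality into an expectation inequality. In exchange, the argument is shorter and avoids conditional expectations. It also gives the stronger quasi-sure estimate $|\mathcal Q_T(\eta)|\le\overline\sigma^2\int_{\mathbb R}\int_0^T|\eta(t,x)|\,dt\,dx$, and, applied to $|\eta|$, the matching lower bound with $\underline\sigma^2$. This is exactly the reasoning the paper itself uses later, in the proof of Lemma~\ref{le:simple-bracket} and for \eqref{eq:A-two-sided}.

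Your handling of the finitely many polar exceptional sets and of the $\mathbf L_G^1$-membership of the two extreme terms is adequate. Only those two terms need to lie in $\mathbf L_G^1$, since the intermediate inequalities hold quasi-surely.
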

\begin{proof}
  For every $\eta=\sum_{i=0}^{n-1}\sum_{j=1}^mX_{ij}(\omega)\mathbf{1}_{[t_i,t_{i+1})}(t)\mathbf{1}_{A_j}(x)\in \mathbf{M}^{1,0}([0, T ] \times \mathbb{R})$,
  \begin{align*}
  \hat{\mathbb{E}}\left[\left|\int_{[0,T]\times\mathbb{R}}\eta(t,x)d\langle \mathbf{W}\rangle\right|\right]&=\hat{\mathbb{E}}\left[\left|\sum_{i=0}^{n-1}\sum_{j=1}^mX_{ij}\left(\langle\mathbf{W}^{A_j}\rangle_{t_{i+1}}-\langle\mathbf{W}^{A_j}\rangle_{t_{i}}\right)\right|\right]\\
  &\leq \hat{\mathbb{E}}\left[\sum_{i=0}^{n-1}\sum_{j=1}^m\left|X_{ij}\right|\left|\langle\mathbf{W}^{A_j}\rangle_{t_{i+1}}-\langle\mathbf{W}^{A_j}\rangle_{t_{i}}\right|\right]\\
  &\leq \hat{\mathbb{E}}\left[\xi\right]+\overline{\sigma}^2\hat{\mathbb{E}}\left[\sum_{i=0}^{n-1}\sum_{j=1}^m\left|X_{ij}\right|(t_{i+1}-t_{i})\lambda_{A_j}\right],
  \end{align*}
where
\begin{align*}
  \hat{\mathbb{E}}\left[\xi\right]&=\hat{\mathbb{E}}\left[\sum_{i=0}^{n-1}\sum_{j=1}^m\left|X_{ij}\right|\left(\left|\langle\mathbf{W}^{A_j}\rangle_{t_{i+1}}-\langle\mathbf{W}^{A_j}\rangle_{t_{i}}\right|-\overline{\sigma}^2(t_{i+1}-t_{i})\lambda_{A_j}\right)\right]\\
  &\leq\sum_{i=0}^{n-1}\sum_{j=1}^m\hat{\mathbb{E}}\left[\left|X_{ij}\right|\left(\left|\langle\mathbf{W}^{A_j}\rangle_{t_{i+1}}-\langle\mathbf{W}^{A_j}\rangle_{t_{i}}\right|-\overline{\sigma}^2(t_{i+1}-t_{i})\lambda_{A_j}\right)\right]\\
  &\leq\sum_{i=0}^{n-1}\sum_{j=1}^m\hat{\mathbb{E}}\left[\left|X_{ij}\right|\left(\hat{\mathbb{E}}\left[\left|\langle\mathbf{W}^{A_j}\rangle_{t_{i+1}}-\langle\mathbf{W}^{A_j}\rangle_{t_{i}}\right|\Big|\mathcal{F}_{t_i}\right]-\overline{\sigma}^2(t_{i+1}-t_{i})\lambda_{A_j}\right)\right]\\
  &=\sum_{i=0}^{n-1}\sum_{j=1}^m\hat{\mathbb{E}}\left[\left|X_{ij}\right|\left(\hat{\mathbb{E}}\left[\left|\langle\mathbf{W}^{A_j}\rangle_{t_{i+1}}-\langle\mathbf{W}^{A_j}\rangle_{t_{i}}\right|\right]-\overline{\sigma}^2(t_{i+1}-t_{i})\lambda_{A_j}\right)\right]\\
  &=0.
  \end{align*}
  Hence,
  \begin{equation*}
  \hat{\mathbb{E}}\left[\left|\int_{[0,T]\times\mathbb{R}}\eta(t,x)d\langle \mathbf{W}\rangle\right|\right]\leq \overline{\sigma}^2\hat{\mathbb{E}}\left[\sum_{i=0}^{n-1}\sum_{j=1}^m\left|X_{ij}\right|(t_{i+1}-t_{i})\lambda_{A_j}\right]=\overline{\sigma}^2\hat{\mathbb{E}}\left[\int_{\mathbb{R}}\int_0^T\left|\eta(t,x)\right|dtdx\right],
  \end{equation*}
which completes the proof.
\end{proof}

Thus, the stochastic integral (\ref{eq:int-quavar}) can be continuously extended to the completion space $\mathbf{M}_G^1([0,T]\times\mathbb{R})$ and the  inequality (\ref{eq:Ito-ineq}) still holds in $\mathbf{M}_G^1([0,T]\times\mathbb{R})$.

Similar to Proposition 3.4.5 in Peng \cite{P2019}, we have the following $G$-It\^o isometry formula. 

\begin{lemma}\label{le:Ito-iso}
For $\eta\in\mathbf{M}_G^2([0,T]\times\mathbb{R})$,
\begin{equation}\label{eq:Ito-iso}
  \hat{\mathbb{E}}\left[\left|\int_{\mathbb{R}}\int_0^T\eta(t,x)\mathbf{W}(dt,dx)\right|^2\right]=\hat{\mathbb{E}}\left[\int_{[0,T]\times\mathbb{R}}\left|\eta(t,x)\right|^2d\langle \mathbf{W}\rangle\right].
\end{equation}
\end{lemma}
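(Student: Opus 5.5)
We first prove \eqref{eq:Ito-iso} for simple fields $\eta=\sum_{i=0}^{n-1}\sum_{j=1}^m X_{ij}\mathbf 1_{[t_i,t_{i+1})}(t)\mathbf 1_{A_j}(x)\in\mathbf M^{2,0}([0,T]\times\mathbb R)$ with disjoint $A_j$, and then pass to the limit. For such $\eta$ we write $\Delta_{ij}:=\mathbf W([t_i,t_{i+1})\times A_j)=\mathbf W^{A_j}_{t_{i+1}}-\mathbf W^{A_j}_{t_i}$ and expand
\[
\Big|\sum_{i,j}X_{ij}\Delta_{ij}\Big|^2=\sum_{i,j}X_{ij}^2\Delta_{ij}^2+\sum_{i}\sum_{j\neq k}X_{ij}X_{ik}\Delta_{ij}\Delta_{ik}+2\sum_{i<l}\sum_{j,k}X_{ij}X_{lk}\Delta_{ij}\Delta_{lk}.
\]
The cross terms with $i<l$ have the form $\xi\,\Delta_{lk}$ with $\xi\in\mathbf L^2_G(\mathcal F_{t_l})$. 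The cross terms with equal time index and $j\ne k$ have the form $\xi\,\mathbf W([t_i,t_{i+1})\times A_j)\mathbf W([t_i,t_{i+1})\times A_k)$ with disjoint sets. By Lemma~\ref{nle-1}(i),(ii) each of these terms $Y$ satisfies $\hat{\mathbb E}[Y]=\hat{\mathbb E}[-Y]=0$. Using the remark in the proof of Lemma~\ref{lem:cross-zero} (adding such a $Y$ does not change $\hat{\mathbb E}$), we obtain
\[
\hat{\mathbb E}\big[|\mathcal I_T(\eta)|^2\big]=\hat{\mathbb E}\Big[\sum_{i,j}X_{ij}^2\Delta_{ij}^2\Big].
\]
Some care is needed on integrability: for $X_{ij}\in\mathbf L^2_G$ only, the products $X_{ij}X_{lk}\Delta\Delta$ lie in $\mathbf L^1_G$ by H\"older's inequality and independence, which suffices.

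Next we replace $\Delta_{ij}^2$ by the quadratic-variation increment $\langle\mathbf W^{A_j}\rangle_{t_{i+1}}-\langle\mathbf W^{A_j}\rangle_{t_i}$. This is the key step, following Peng's proof for $G$-Brownian motion. Since $\mathbf W^{A_j}$ is a $G$-Brownian motion, the definition of quadratic variation gives
\[
(\mathbf W^{A_j}_{t_{i+1}}-\mathbf W^{A_j}_{t_i})^2-(\langle\mathbf W^{A_j}\rangle_{t_{i+1}}-\langle\mathbf W^{A_j}\rangle_{t_i})=2\int_{t_i}^{t_{i+1}}(\mathbf W^{A_j}_s-\mathbf W^{A_j}_{t_i})\,d\mathbf W^{A_j}_s.
\]
Multiplying by $X_{ij}^2\in\mathbf L^1_G(\mathcal F_{t_i})$ gives a $G$-It\^o integral of an $\mathcal F$-adapted integrand. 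Its sublinear expectation and that of its negative both vanish; this follows from Lemma~\ref{nle-1}(iii) after approximating the integrand by simple processes, or from the corresponding statement in Peng \cite{P2019}. The main obstacle is that $X_{ij}^2$ is only in $\mathbf L^1_G$. We handle it by truncation: first take $X_{ij}$ bounded in $L_{ip}$, where $X_{ij}^2(\cdot)$ times the stochastic integral is a genuine integral of an $\mathbf M^2_G$ integrand, and then use density together with the bound $\hat{\mathbb E}[X^2\Delta^2]\le \hat{\mathbb E}[X^2]\,\overline\sigma^2\lambda(t_{i+1}-t_i)$, which holds by independence. Summing over $i,j$ and using the remark again yields
\[
\hat{\mathbb E}\big[|\mathcal I_T(\eta)|^2\big]=\hat{\mathbb E}\Big[\sum_{i,j}X_{ij}^2\big(\langle\mathbf W^{A_j}\rangle_{t_{i+1}}-\langle\mathbf W^{A_j}\rangle_{t_i}\big)\Big]=\hat{\mathbb E}\Big[\int_{[0,T]\times\mathbb R}|\eta|^2\,d\langle\mathbf W\rangle\Big],
\]
since $|\eta|^2$ is simple with coefficients $X_{ij}^2$, so the last equality is just definition \eqref{eq:int-quavar}.

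Finally, for general $\eta\in\mathbf M^2_G$ we choose simple $\eta_n\to\eta$ in $\mathbf M^2$. The left side converges because $\mathcal I_T$ is continuous from $\mathbf M^2_G$ to $\mathbf L^2_G$ by Lemma~\ref{nle-1}(v). For the right side, $|\eta_n|^2\to|\eta|^2$ in $\mathbf M^1_G$, by Cauchy--Schwarz applied to $|\eta_n|^2-|\eta|^2=(\eta_n-\eta)(\eta_n+\eta)$. Lemma~\ref{le:Ito-ineq} then gives convergence of the $d\langle\mathbf W\rangle$-integrals in $\mathbf L^1_G$. Since $\hat{\mathbb E}$ is $\mathbf L^1$-continuous, \eqref{eq:Ito-iso} follows.
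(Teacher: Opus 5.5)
Your proposal is correct and follows essentially the same route as the paper: remove the cross terms with Lemma~\ref{nle-1}, replace each squared increment by the corresponding increment of $\langle\mathbf W^{A_j}\rangle$ using the symmetric mean-zero It\^o integral (this is exactly the content of Lemma~\ref{le:w2-<W>}), and then extend from simple fields by density. Your truncation to bounded coefficients and your explicit Cauchy--Schwarz argument combined with Lemma~\ref{le:Ito-ineq} for the right-hand side make explicit the integrability and continuity points that the paper leaves implicit.
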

To prove the above It\^o isometry, we need the following lemma, which is a natural generalization of Proposition 3.4.4 in Peng \cite{P2019}.
\begin{lemma}\label{le:w2-<W>}
For $X\in \mathbf{L}^2_G(\mathcal{F})$ and $\xi_j\in \mathbf{L}^2_G(\mathcal{F}_s)$, $1\leq j\leq m$,
\begin{equation}
  \hat{\mathbb{E}}\left[X+\sum_{j=1}^m\xi_j^2|\mathbf{W}([s,t)\times A_j)|^2\right]=\hat{\mathbb{E}}\left[X+\sum_{j=1}^m\xi_j^2\left(\langle \mathbf{W}^{A_j}\rangle_t-\langle \mathbf{W}^{A_j}\rangle_s\right)\right].
\end{equation}
\end{lemma}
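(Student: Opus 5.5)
We follow the structure of the proof of Proposition 3.4.4 in Peng \cite{P2019}. The sets $A_1,\dots,A_m$ are implicitly taken pairwise disjoint (as in \eqref{simple-func}); if not, we first refine them into a disjoint family and use Corollary \ref{cor:qv-additivity}. The plan is to reduce the identity to an identity on a single time step where everything is a functional of the $m$ increments on $[s,t)$, and then compare square increments with quadratic variation increments through a fine partition.

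\textbf{Step 1: the identity on each subinterval.} Fix a partition $s=r_0<r_1<\cdots<r_N=t$ and write $\Delta_k^j:=\mathbf W^{A_j}_{r_{k+1}}-\mathbf W^{A_j}_{r_k}$. Then
\[
|\mathbf W([s,t)\times A_j)|^2=\sum_{k}(\Delta^j_k)^2+2\sum_{k<\ell}\Delta^j_k\Delta^j_\ell .
\]
The cross terms $\xi_j^2\Delta^j_k\Delta^j_\ell$ with $k<\ell$ have the form $\zeta\,\mathbf W([r_\ell,r_{\ell+1})\times A_j)$ with $\zeta\in\mathbf L_G^1(\mathcal F_{r_\ell})$. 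The integrability of $\zeta$ will need a truncation of $\xi_j$ (or the assumption that the $\xi_j$ are bounded), which is then removed at the end. By Lemma \ref{nle-1}(i) applied to $\pm\zeta$, each such term $Y$ satisfies $\hat{\mathbb E}[Y]=\hat{\mathbb E}[-Y]=0$. The same remark used in the proof of Lemma \ref{lem:cross-zero} then lets us drop these terms inside $\hat{\mathbb E}[X+\cdots]$. Hence the left side equals
\[
\hat{\mathbb E}\Bigl[X+\sum_j\xi_j^2\sum_k(\Delta^j_k)^2\Bigr].
\]

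\textbf{Step 2: passing to the limit.} By the definition of $\langle\mathbf W^{A_j}\rangle$, we have $\sum_k(\Delta^j_k)^2\to\langle\mathbf W^{A_j}\rangle_t-\langle\mathbf W^{A_j}\rangle_s$ in $\mathbf L_G^2$ as $\mu(\pi)\to0$. For bounded $\xi_j$ (say $|\xi_j|\le M$), the map $Z\mapsto\hat{\mathbb E}[X+Z]$ is $1$-Lipschitz in $\mathbf L^1_G$. We can therefore let $\mu(\pi)\to 0$ and obtain the identity for bounded $\xi_j$.

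\textbf{Step 3: removing boundedness, the main obstacle.} For general $\xi_j\in\mathbf L_G^2(\mathcal F_s)$, replace $\xi_j$ by $\xi_j^M=(\xi_j\wedge M)\vee(-M)$. We must show that both sides converge as $M\to\infty$. Using the independence of the increment $\mathbf W([s,t)\times A_j)$ from $\mathcal F_s$, we get
\[
\hat{\mathbb E}\bigl[(\xi_j^2-(\xi_j^M)^2)|\mathbf W([s,t)\times A_j)|^2\bigr]\le\overline\sigma^2\lambda_{A_j}(t-s)\,\hat{\mathbb E}\bigl[\xi_j^2\mathbf 1_{\{|\xi_j|>M\}}\bigr]\to0,
\]
where the convergence to zero follows from the pathwise characterization of $\mathbf L_G^2$. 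Proposition \ref{prop:quad-var}(iv) bounds the quadratic variation increment by $\overline\sigma^2\lambda_{A_j}(t-s)$, so the right-hand side is handled in the same way. This truncation step, and in particular checking that each product lies in $\mathbf L_G^1$ so that the sublinear expectation and the conditional expectation arguments are legitimate, is where most of the care is required. If it proves delicate, an alternative is to prove the identity first for $\xi_j\in L_{ip}(\mathcal F_s)$, where integrability is immediate, and then extend by density using the same bounds.
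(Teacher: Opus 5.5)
Your argument is correct, and it takes a different route from the paper. The paper uses no partition. It writes $|\mathbf W^{A_j}_t-\mathbf W^{A_j}_s|^2=(\mathbf W^{A_j}_t)^2-(\mathbf W^{A_j}_s)^2-2(\mathbf W^{A_j}_t-\mathbf W^{A_j}_s)\mathbf W^{A_j}_s$ and discards the last term. It then uses the It\^o representation $\langle\mathbf W^{A_j}\rangle_t-\langle\mathbf W^{A_j}\rangle_s=(\mathbf W^{A_j}_t)^2-(\mathbf W^{A_j}_s)^2-2\int_s^t\mathbf W^{A_j}_r\,d\mathbf W^{A_j}_r$ and discards $\xi_j^2\int_s^t\mathbf W^{A_j}_r\,d\mathbf W^{A_j}_r$. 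Both discards rest on $\hat{\mathbb E}[\pm\,\cdot\,]=0$.

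You work directly with the defining sums. The off-diagonal products $\Delta^j_k\Delta^j_\ell$ are removed by Lemma~\ref{nle-1}(i) alone, and the diagonal sums pass to the limit through the definition of $\langle\mathbf W^{A_j}\rangle$ as an $\mathbf L_G^2$-limit. The paper's version is a two-line computation, but it needs the $G$-It\^o integral $\int\mathbf W^{A}\,d\mathbf W^{A}$. Moreover, its two vanishing-expectation steps, as written, require $\xi_j^2\in\mathbf L_G^2(\mathcal F_s)$ and $\xi_j^2\mathbf W^{A_j}_s\in\mathbf L_G^2(\mathcal F_s)$, which is more than $\xi_j\in\mathbf L_G^2$ provides. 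Your bounded case followed by truncation, using Proposition~\ref{prop:quad-var}(iv) and $\bar{\mathbb E}[\xi_j^2\mathbf 1_{\{|\xi_j|>M\}}]\to0$, is what actually covers the hypothesis as stated.

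Two small remarks. First, disjointness of the $A_j$ is never used, since each $j$ is handled separately, so the refinement remark can be dropped. Second, the Step~3 estimate for $\hat{\mathbb E}[(\xi_j^2-(\xi_j^M)^2)|\mathbf W([s,t)\times A_j)|^2]$ needs no independence statement for unbounded $\mathcal F_s$-measurable factors. Steps~1--2 apply verbatim with $\xi_j^2$ replaced by any bounded coefficient in $\mathbf L_G^2(\mathcal F_s)$, e.g.\ the Lipschitz function $(\xi_j^2-(\xi_j^M)^2)\wedge N$ of $\xi_j$. This gives the bound $\overline\sigma^2\lambda_{A_j}(t-s)\bar{\mathbb E}[\xi_j^2\mathbf 1_{\{|\xi_j|>M\}}]$ uniformly in $N$, and you then let $N\to\infty$ by monotone convergence under each $P\in\mathcal P$.
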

\begin{proof}
\begin{align*}
\hat{\mathbb{E}}\left[X+\sum_{j=1}^m\xi_j^2|\mathbf{W}([s,t)\times A_j)|^2\right]&=\hat{\mathbb{E}}\left[X+\sum_{j=1}^m\xi_j^2\left| \mathbf{W}^{A_j}_t-\mathbf{W}^{A_j}_s\right|^2\right]\\
&=\hat{\mathbb{E}}\left[X+\sum_{j=1}^m\xi_j^2\left(\left|\mathbf{W}^{A_j}_t\right|^2+\left|\mathbf{W}^{A_j}_s\right|^2-2\mathbf{W}^{A_j}_t\mathbf{W}^{A_j}_s\right)\right]\\
&=\hat{\mathbb{E}}\left[X+\sum_{j=1}^m\xi_j^2\left(\left|\mathbf{W}^{A_j}_t\right|^2-\left|\mathbf{W}^{A_j}_s\right|^2-2\left(\mathbf{W}^{A_j}_t-\mathbf{W}^{A_j}_s\right)\mathbf{W}^{A_j}_s\right)\right]\\
&=\hat{\mathbb{E}}\left[X+\sum_{j=1}^m\xi_j^2\left(\left|\mathbf{W}^{A_j}_t\right|^2-\left|\mathbf{W}^{A_j}_s\right|^2\right)\right].
\end{align*}
Since 
\begin{equation*}
  \hat{\mathbb{E}}\left[\pm\xi_j^2\int_s^t\mathbf{W}^{A_j}_rd\mathbf{W}^{A_j}_r\right]=0,
\end{equation*}
we have
\begin{align*}
  &\hat{\mathbb{E}}\left[X+\sum_{j=1}^m\xi_j^2\left(\left|\mathbf{W}^{A_j}_t\right|^2-\left|\mathbf{W}^{A_j}_s\right|^2\right)\right]\\
  &=\hat{\mathbb{E}}\left[X+\sum_{j=1}^m\xi_j^2\left(\left|\mathbf{W}^{A_j}_t\right|^2-\left|\mathbf{W}^{A_j}_s\right|^2-2\int_s^t\mathbf{W}^{A_j}_rd\mathbf{W}^{A_j}_r\right)\right]\\
  &=\hat{\mathbb{E}}\left[X+\sum_{j=1}^m\xi_j^2\left(\langle \mathbf{W}^{A_j}\rangle_t-\langle \mathbf{W}^{A_j}\rangle_s\right)\right],
\end{align*}
which completes the proof.
\end{proof}

\begin{proof}[Proof of Lemma \ref{le:Ito-iso}] 
For each $\eta=\sum_{i=0}^{n-1}\sum_{j=1}^mX_{ij}(\omega)\mathbf{1}_{[t_i,t_{i+1})}(t)\mathbf{1}_{A_j}(x)\in \mathbf{M}^{2,0}([0, T ] \times \mathbb{R})$,
\begin{align*}
  \hat{\mathbb{E}}\left[\left|\int_{\mathbb{R}}\int_0^T\eta(t,x)\mathbf{W}(dt,dx)\right|^2\right]
  &=\hat{\mathbb{E}}\left[\left|\sum_{i=0}^{n-1}\sum_{j=1}^mX_{ij}\mathbf{W}([t_i,t_{i+1})\times A_j)\right|^2\right]\\
  &=\hat{\mathbb{E}}\left[\sum_{i=0}^{n-1}\sum_{j=1}^mX^2_{ij}\left|\mathbf{W}([t_i,t_{i+1})\times A_j)\right|^2\right]\\
  &=\hat{\mathbb{E}}\left[\sum_{i=0}^{n-1}\sum_{j=1}^mX^2_{ij}\left(\langle \mathbf{W}^{A_j}\rangle_{t_{i+1}}-\langle \mathbf{W}^{A_j}\rangle_{t_{i}}\right)\right]\\
  &=\hat{\mathbb{E}}\left[\int_{\mathbb{R}}\int_0^T\left|\eta(t,x)\right|^2d\langle \mathbf{W}\rangle\right],
\end{align*}
where the third equality applies Lemma \ref{le:w2-<W>}.

For every $\eta\in\mathbf{M}^{2}_G([0, T ] \times \mathbb{R})$, there exists a sequence of $\{\eta_n\}_{n=1}^\infty$ such that $\eta_n\to \eta$ in $\mathbf{M}^{2}_G([0, T ] \times \mathbb{R})$ as $n\to \infty$. Then 
\begin{align*}
\hat{\mathbb{E}}\left[\left|\int_{\mathbb{R}}\int_0^T\eta(t,x)\mathbf{W}(dt,dx)\right|^2\right]&=\lim_{n\to\infty}\hat{\mathbb{E}}\left[\left|\int_{\mathbb{R}}\int_0^T\eta_n(t,x)\mathbf{W}(dt,dx)\right|^2\right]\\
&=\lim_{n\to\infty}\hat{\mathbb{E}}\left[\int_{\mathbb{R}}\int_0^T\left|\eta_n(t,x)\right|^2d\langle \mathbf{W}\rangle\right]\\
&=\hat{\mathbb{E}}\left[\int_{\mathbb{R}}\int_0^T\left|\eta(t,x)\right|^2d\langle \mathbf{W}\rangle\right]. 
\end{align*} 
\end{proof}

\subsection{Identification under the representing measures and the BDG inequality}

Let $\mathcal{P}$ be the weakly compact family of probability measures that represents $\hat{\mathbb{E}}$ in
Theorem \ref{rep-thm}.  Recall that, for a nonnegative
$\mathcal{B}(\Omega)$-measurable random variable $X$,
\[
\bar{\mathbb{E}}[X]
:=
\sup_{P\in\mathcal{P}}E_P[X].
\]
On $\mathbf{L}_G^1(\mathcal F)$, $\bar{\mathbb E}$ coincides with
$\hat{\mathbb E}$.

We first identify, under each $P\in\mathcal P$, the classical quadratic
variations of the spatial projections of the noise.

\begin{lemma}\label{le:P-mart}
For every $A\in\mathcal B_0(\mathbb R)$ and every $P\in\mathcal P$,
$\mathbf W^A=(\mathbf W_t^A)_{t\geq 0}$ is a continuous
square-integrable $P$-martingale with respect to the usual
$P$-augmentation of the canonical filtration.
\end{lemma}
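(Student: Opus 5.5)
The proof has three parts: pathwise continuity, square integrability under each $P\in\mathcal P$, and the martingale property obtained from the vanishing of $\hat{\mathbb E}[\pm\,\cdot\,]$ on increments. Since every measure in $\mathcal P$ is dominated by $\hat{\mathbb E}$, each symmetric statement under $\hat{\mathbb E}$ becomes an exact identity under $P$.

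\emph{Continuity.} If $A$ is a finite union of bounded open intervals, then $t\mapsto \mathbf W^A_t(\omega)=\omega([0,t)\times A)$ is continuous for every $\omega\in\Omega$, because $\omega\in C_0(\Gamma_1;\mathbb R)$ and $\omega$ is additive. For general $A\in\mathcal B_0(\mathbb R)$, choose such $A_n$ with $\lambda_{A_n\triangle A}\to0$. The difference $\mathbf W^{A_n}-\mathbf W^{A}$ is a $G$-Brownian motion with variance parameter at most $\overline\sigma^2\lambda_{A_n\triangle A}$, up to the splitting $\mathbf W^{A_n}-\mathbf W^A=\mathbf W^{A_n\setminus A}-\mathbf W^{A\setminus A_n}$. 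The $G$-BDG/Doob inequality for $G$-Brownian motion (Peng) then gives
\[
\bar{\mathbb E}\Big[\sup_{t\le T}|\mathbf W^{A_n}_t-\mathbf W^A_t|^2\Big]\le C\,T\,\lambda_{A_n\triangle A}.
\]
Passing to a subsequence, the continuous processes $\mathbf W^{A_n}$ converge uniformly on $[0,T]$ quasi-surely, and hence $P$-a.s. for each $P\in\mathcal P$. This yields a continuous version of $\mathbf W^A$, which we use from now on.

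\emph{Integrability and adaptedness.} Since $E_P[|\mathbf W^A_t|^2]\le\hat{\mathbb E}[|\mathbf W^A_t|^2]=\overline\sigma^2\lambda_A t<\infty$, each $\mathbf W^A_t$ is square integrable under $P$. It is measurable with respect to $\mathcal F_t$ up to $P$-null sets, since it is a $\mathbf L^2_G(\mathcal F_t)$-limit and therefore a $P$-a.s. limit along a subsequence. So $\mathbf W^A$ is adapted to the $P$-augmented filtration.

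\emph{Martingale property.} This is the main step. Fix $s<t$ and a bounded Lipschitz $\varphi$ of finitely many coordinates $\mathbf W([s_i,t_i)\times(x_i,y_i))$ with $t_i\le s$. By Lemma~\ref{nle-1}(i) with $\xi=\varphi$ and with $\xi=-\varphi$, we get $\hat{\mathbb E}[\pm\varphi(\mathbf W^A_t-\mathbf W^A_s)]=0$. Because $E_P\le\hat{\mathbb E}$, this gives $E_P[\varphi(\mathbf W^A_t-\mathbf W^A_s)]\le 0$ and $-E_P[\varphi(\mathbf W^A_t-\mathbf W^A_s)]\le 0$, so
\[
E_P[\varphi\,(\mathbf W^A_t-\mathbf W^A_s)]=0 .
\]
The class of such $\varphi$ is closed under multiplication and generates $\mathcal F_s$. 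Moreover $\mathbf W^A_t-\mathbf W^A_s\in L^2(P)$, so $\varphi\mapsto E_P[\varphi(\mathbf W^A_t-\mathbf W^A_s)]$ is continuous under bounded pointwise convergence. A monotone class argument therefore extends the identity to all bounded $\mathcal F_s$-measurable $\varphi$, and then to the $P$-augmentation, since null sets do not affect expectations. Hence $E_P[\mathbf W^A_t-\mathbf W^A_s\mid\mathcal F_s]=0$.

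The main obstacle I expect is the first step, obtaining a continuous version for general Borel $A$. For interval unions continuity is automatic, and for general $A$ the approximation argument above needs a maximal inequality under $\hat{\mathbb E}$. If quoting the $G$-Doob inequality is not allowed, I would instead apply classical Doob under each $P$ to the martingales $\mathbf W^{A_n}-\mathbf W^{A_m}$ from the interval-union case, using the martingale property already proved for them. This gives $\sup_P E_P[\sup_{t\le T}|\cdot|^2]\le 4\overline\sigma^2 T\lambda_{A_n\triangle A_m}$, a uniform Cauchy bound, so the limit is continuous quasi-surely and agrees with $\mathbf W^A_t$ in $\mathbf L^2_G$ at each fixed $t$.
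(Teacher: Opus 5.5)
Your proposal is correct and follows the paper's proof: the martingale identity comes from Lemma~\ref{nle-1}(i) applied to $\pm\xi$, the domination $E_P\le\hat{\mathbb E}$ and a monotone class argument, and square integrability comes from $E_P[|\mathbf W^A_t|^2]\le\overline{\sigma}^2\lambda_A t$; the one difference is continuity for general $A$, which you prove by interval-union approximation, Doob under each $P$ and Borel--Cantelli under $c$ rather than by quoting the quasi-continuous version of $G$-Brownian motion as the paper does, and this is more self-contained provided you use your second variant, since the $G$-Doob variant already presupposes such a continuous version. You and the paper both pass silently from the $P$-completion of $\mathcal F_s$ to the right-continuous augmentation; this follows by applying the identity on $\mathcal F^P_{s+1/n}$ and letting $n\to\infty$, since $E_P[|\mathbf W^A_{s+1/n}-\mathbf W^A_s|^2]\le\overline{\sigma}^2\lambda_A/n$.
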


\begin{proof}
Let $0\le s<t$.  For each bounded
Lipschitz cylinder random variable $\xi\in L_{ip} (\mathcal{F}_s)$, Lemma~\ref{nle-1}
applied to $\xi$ and $-\xi$ yields
\[
\hat{\mathbb E}\!\left[
\pm \xi(\mathbf W_t^A-\mathbf W_s^A)
\right]
=0.
\]
Since $E_P[X]\le\hat{\mathbb E}[X]$ for every
$X\in\mathbf L_G^1(\mathcal F)$, it follows that
\[
E_P\!\left[\xi(\mathbf W_t^A-\mathbf W_s^A)\right]=0.
\]
A monotone class argument extends this identity to bounded random variables
measurable with respect to the $P$-completion of $\mathcal F_s$. Then $\mathbf W^A$ is a martingale
with respect to the usual $P$-augmentation.   Moreover,
\[
E_P[|\mathbf W_t^A|^2]
\le
\hat{\mathbb E}[|\mathbf W_t^A|^2]
=
\overline{\sigma}^2\lambda_A t<\infty.
\]
The continuity follows from the quasi-surely continuous version of the one-dimensional
$G$-Brownian motion $\mathbf W^A$.  Thus $\mathbf W^A$ is a continuous
square-integrable $P$-martingale.
\end{proof}

For  continuous $P$-martingales $M$ and $N$, denote its classical quadratic
variation by $[M]^P$ and the classical cross-variation of $M,N$ by
$[M,N]^P$.

\begin{proposition}\label{prop:P-qv-identification}
For every $P\in\mathcal P$ and $A\in\mathcal B_0(\mathbb R)$,
\[
[\mathbf W^A]^P_t
=
\langle\mathbf W^A\rangle_t,
\qquad
0\le t\le T,
\quad P\text{-a.s.}
\]
where the equality holds simultaneously for all $t$, i.e. the two
continuous processes are indistinguishable under $P$.  If
$A,B\in\mathcal B_0(\mathbb R)$ are disjoint, then
\[
[\mathbf W^A,\mathbf W^B]^P_t
=
0,
\qquad
0\le t\le T,
\quad P\text{-a.s.}
\]
\end{proposition}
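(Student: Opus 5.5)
The plan is to identify both quantities as limits in probability of the same discrete sums along a fixed sequence of partitions. Then uniqueness of limits, together with continuity of paths, gives indistinguishability.

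Fix $P\in\mathcal P$ and $A\in\mathcal B_0(\mathbb R)$. By Lemma~\ref{le:P-mart}, $\mathbf W^A$ is a continuous square-integrable $P$-martingale. Hence the classical theory gives, for each fixed $t\in[0,T]$ and any deterministic partitions $\pi$ of $[0,t]$ with $\mu(\pi)\to0$,
\[
\sum_{k}(\mathbf W^A_{t_{k+1}}-\mathbf W^A_{t_k})^2\longrightarrow[\mathbf W^A]^P_t
\]
in $P$-probability. On the other hand, the definition of $\langle\mathbf W^A\rangle_t$ states that the same sums converge to $\langle\mathbf W^A\rangle_t$ in $\mathbf L_G^2$. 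Since $E_P[|X|^2]\le\bar{\mathbb E}[|X|^2]=\hat{\mathbb E}[|X|^2]$ for $X\in\mathbf L_G^2(\mathcal F)$, convergence in $\mathbf L_G^2$ implies convergence in $L^2(P)$, and hence in $P$-probability.

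One point needs care. The element $\langle\mathbf W^A\rangle_t\in\mathbf L_G^2$ is defined only up to polar sets, and we must use the quasi-surely continuous version from Proposition~\ref{prop:quad-var}. Two different versions agree q.s., hence $P$-a.s., so the argument is unaffected. Uniqueness of limits in probability then gives $[\mathbf W^A]^P_t=\langle\mathbf W^A\rangle_t$ $P$-a.s. for each fixed $t$. Taking $t$ over the rationals in $[0,T]$ together with $T$, we obtain a single $P$-null set outside of which the two processes agree on a dense set. Both processes are continuous: $[\mathbf W^A]^P$ by classical theory, and $\langle\mathbf W^A\rangle$ $P$-a.s. because it is continuous q.s. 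Hence they agree for all $t$, i.e.\ the processes are indistinguishable.

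For disjoint $A,B$ the argument is the same, using the cross sums $C^\pi_t(A,B)$ of \eqref{eq:cross-sum}. Classically, $C^\pi_t(A,B)\to[\mathbf W^A,\mathbf W^B]^P_t$ in $P$-probability; this follows from polarization and the quadratic-variation convergence for $\mathbf W^A\pm\mathbf W^B$, which are continuous $P$-martingales. Lemma~\ref{lem:cross-zero} shows that $C^\pi_t(A,B)\to0$ in $\mathbf L_G^2$, hence in $L^2(P)$. Therefore $[\mathbf W^A,\mathbf W^B]^P_t=0$ $P$-a.s. for each $t$, and continuity of the bracket together with the rationals argument yields indistinguishability.

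The main technical point is the transfer from $\mathbf L_G^2$-convergence to $P$-convergence, which is routine given $E_P\le\hat{\mathbb E}$. I also need the classical fact that quadratic variation sums of a continuous $L^2$ martingale converge in probability along any deterministic partitions with vanishing mesh, not merely along a subsequence; this is standard. Filtration issues do not arise, because the sums are pathwise functionals and the martingale property holds in the usual augmentation by Lemma~\ref{le:P-mart}.
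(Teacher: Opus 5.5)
Your proposal is correct and follows essentially the same route as the paper: you identify both brackets as the $P$-probability limit of the same partition sums, using $E_P\le\hat{\mathbb E}$ to pass from $\mathbf L_G^2$-convergence to $P$-convergence and Lemma~\ref{lem:cross-zero} for the disjoint case, and then upgrade fixed-time equality to indistinguishability via rational times and path continuity. Your extra remarks on using the quasi-surely continuous version of $\langle\mathbf W^A\rangle$ and on obtaining the classical cross-sum convergence by polarization fill in details that the paper leaves implicit.
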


\begin{proof}
Fix first $t\in[0,T]$ and let
$\pi_n=\{0=t_0^n<\cdots<t_{N_n}^n=t\}$ be deterministic partitions with
$\mu(\pi_n)\to0$.  By the definition of the $G$-quadratic variation,
\[
\sum_{k=0}^{N_n-1}
(\mathbf W^A_{t_{k+1}^n}-\mathbf W^A_{t_k^n})^2
\longrightarrow
\langle\mathbf W^A\rangle_t
\qquad\text{in }\mathbf L_G^2.
\]
Hence the same convergence holds in $P$-probability.  On the other hand,
by Lemma~\ref{le:P-mart} and the classical quadratic-variation theorem,
the same sums converge in $P$-probability to $[\mathbf W^A]^P_t$.
Uniqueness of limits in probability therefore gives
\[
[\mathbf W^A]^P_t=\langle\mathbf W^A\rangle_t,
\qquad P\text{-a.s.}
\]
for each fixed $t$.  Applying this equality simultaneously to all
$t\in\mathbb Q\cap[0,T]$ and then using continuity of both processes yields
the equality for all $t\in[0,T]$ outside one $P$-null set.

Now suppose that $A\cap B=\emptyset$.  By Lemma~\ref{lem:cross-zero},
for every fixed $t$,
\[
\sum_{k=0}^{N_n-1}
(\mathbf W^A_{t_{k+1}^n}-\mathbf W^A_{t_k^n})
(\mathbf W^B_{t_{k+1}^n}-\mathbf W^B_{t_k^n})
\longrightarrow0
\qquad\text{in }\mathbf L_G^2,
\]
and hence in $P$-probability.  Classical quadratic-variation theory gives
convergence of the same cross sums in $P$-probability to
$[\mathbf W^A,\mathbf W^B]^P_t$.  Thus the cross-variation is zero at every
fixed $t$, $P$-a.s.  A rational-time argument and the continuity of the
classical cross-variation process yield the asserted process level identity.
\end{proof}

We next compute the bracket of the stochastic integral for simple
integrands.  Let
\[
\eta(t,x)
=
\sum_{i=0}^{n-1}\sum_{j=1}^{m}
X_{ij}
\mathbf1_{[t_i,t_{i+1})}(t)
\mathbf1_{A_j}(x)
\in\mathbf M^{p,0}([0,T]\times\mathbb R),
\qquad p\ge2,
\]
where the $A_j$ are mutually disjoint, and set
\[
\mathcal{I}_t(\eta)
:=
\int_0^t\int_{\mathbb R}
\eta(s,x)\mathbf W(ds,dx).
\]
Define also
\begin{equation}\label{eq:A-simple}
\mathcal A_t(\eta)
:=
\int_{[0,t]\times\mathbb R}
|\eta(s,x)|^2d\langle\mathbf W\rangle
=
\sum_{i=0}^{n-1}\sum_{j=1}^{m}
X_{ij}^2
\left(
\langle\mathbf W^{A_j}\rangle_{t\wedge t_{i+1}}
-
\langle\mathbf W^{A_j}\rangle_{t\wedge t_i}
\right).
\end{equation}
For simple $\eta$, the process $\mathcal A(\eta)$ is quasi-surely continuous
and nondecreasing.
\begin{lemma}\label{le:simple-bracket}
For every $P\in\mathcal P$,
$\mathcal I(\eta)=(\mathcal I_t(\eta))_{0\le t\le T}$
is a continuous square-integrable $P$-martingale. Moreover,
up to $P$-indistinguishability,
\begin{equation}\label{eq:simple-bracket}
[\mathcal I(\eta)]^P_t
=
\mathcal A_t(\eta),
\qquad 0\le t\le T.
\end{equation}
\end{lemma}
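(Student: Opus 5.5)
The plan is to reduce everything to classical stochastic calculus under a fixed $P\in\mathcal P$, using Lemma~\ref{le:P-mart} and Proposition~\ref{prop:P-qv-identification}.

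First I write the simple integral as a finite sum of elementary integrals. For $t\in[0,T]$,
\[
\mathcal I_t(\eta)=\sum_{i=0}^{n-1}\sum_{j=1}^m X_{ij}\bigl(\mathbf W^{A_j}_{t\wedge t_{i+1}}-\mathbf W^{A_j}_{t\wedge t_i}\bigr)=\sum_{j=1}^m\int_0^t H^j_s\,d\mathbf W^{A_j}_s,
\]
where $H^j_s:=\sum_i X_{ij}\mathbf 1_{(t_i,t_{i+1}]}(s)$ and the integrals are classical Itô integrals under $P$. This identity is pathwise, since both sides are the same finite sum.

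Next I check the integrability needed for the classical theory. $X_{ij}$ is $\mathcal F_{t_i}$-measurable: it is an $\mathbf L^p_G(\mathcal F_{t_i})$-limit of cylinder functions, so it is q.s. equal to an $\mathcal F_{t_i}$-measurable variable, hence measurable for the $P$-augmented $\mathcal F_{t_i}$. Also $E_P[X_{ij}^2]\le\hat{\mathbb E}[X_{ij}^2]<\infty$, since $p\ge2$. By Lemma~\ref{le:P-mart}, each $\mathbf W^{A_j}$ is a continuous square-integrable $P$-martingale. Hence each term $X_{ij}(\mathbf W^{A_j}_{t\wedge t_{i+1}}-\mathbf W^{A_j}_{t\wedge t_i})$ is a continuous $P$-martingale. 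To justify this, I condition on $\mathcal F_{t_i}$ and use the martingale property of $\mathbf W^{A_j}$. Square integrability needs some care: the bound
\[
E_P\bigl[X_{ij}^2(\Delta\mathbf W)^2\bigr]\le \hat{\mathbb E}\bigl[X_{ij}^2(\Delta \mathbf W)^2\bigr]
\]
is finite by Lemma~\ref{le:w2-<W>} together with Proposition~\ref{prop:quad-var}(iv), which bounds it by $\overline\sigma^2\lambda_{A_j}(t_{i+1}-t_i)\hat{\mathbb E}[X_{ij}^2]$. Alternatively, I can first truncate $X_{ij}$ and localize. Summing the finitely many terms shows that $\mathcal I(\eta)$ is a continuous square-integrable $P$-martingale.

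For the bracket I use bilinearity of the classical covariation and the rule $[\int H\,dM,\int K\,dN]=\int HK\,d[M,N]$:
\[
[\mathcal I(\eta)]^P_t=\sum_{j,k}\int_0^t H^j_sH^k_s\,d[\mathbf W^{A_j},\mathbf W^{A_k}]^P_s .
\]
By Proposition~\ref{prop:P-qv-identification}, the off-diagonal terms ($j\ne k$) vanish because the $A_j$ are disjoint, and $[\mathbf W^{A_j}]^P=\langle\mathbf W^{A_j}\rangle$ up to indistinguishability. Since the intervals $(t_i,t_{i+1}]$ are disjoint, $(H^j)^2=\sum_i X_{ij}^2\mathbf 1_{(t_i,t_{i+1}]}$. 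The Stieltjes integral against the continuous process $\langle\mathbf W^{A_j}\rangle$ then equals $\sum_i X_{ij}^2(\langle\mathbf W^{A_j}\rangle_{t\wedge t_{i+1}}-\langle\mathbf W^{A_j}\rangle_{t\wedge t_i})$, which is $\mathcal A_t(\eta)$ as defined in \eqref{eq:A-simple}. The null sets involved are finitely many, so the identity holds for all $t$ simultaneously, $P$-a.s.

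The main obstacle is the measurability and integrability bookkeeping: identifying $X_{ij}$ with a $P$-a.s. $\mathcal F_{t_i}$-measurable, $P$-square-integrable variable, and justifying the classical stochastic-integral and covariation rules for merely square-integrable (possibly unbounded) $X_{ij}$. If this direct route becomes awkward, I would first prove the result for bounded $X_{ij}$, for instance via cylinder approximations, and then pass to the limit using localization of the classical bracket.
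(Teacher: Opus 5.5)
Your proposal is correct and follows essentially the same route as the paper. You write $\mathcal I_t(\eta)=\sum_j\int_0^t H^j_s\,d\mathbf W^{A_j}_s$ with predictable step processes $H^j$ (the paper's $\phi_j$), verify square integrability, and compute the bracket by the classical covariation rule, using Proposition~\ref{prop:P-qv-identification} to kill the cross terms and identify $[\mathbf W^{A_j}]^P$ with $\langle\mathbf W^{A_j}\rangle$. The differences are minor: you check the martingale property term by term and bound $E_P[X_{ij}^2(\Delta\mathbf W)^2]$ through $\hat{\mathbb E}$ and Lemma~\ref{le:w2-<W>}, whereas the paper bounds $E_P[\int_0^T|\phi_j|^2\,d[\mathbf W^{A_j}]^P]$ directly via the quasi-sure bound in Proposition~\ref{prop:quad-var}(iv), and you are more careful than the paper about the $P$-augmented $\mathcal F_{t_i}$-measurability of $X_{ij}$.
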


\begin{proof}
For $1\le j\le m$, define
\[
\phi_j(s)
:=
\sum_{i=0}^{n-1}
X_{ij}\mathbf1_{(t_i,t_{i+1}]}(s).
\]
Since $X_{ij}$ is $\mathcal F_{t_i}$-measurable,
$\phi_j$ is a predictable step process under every
$P\in\mathcal P$.

We first identify the simple space-time stochastic integral with a
classical stochastic integral under $P$. By the definition of
$\mathcal I(\eta)$,
\[
\mathcal I_t(\eta)
=
\sum_{i=0}^{n-1}\sum_{j=1}^m
X_{ij}
\left(
\mathbf W_{t\wedge t_{i+1}}^{A_j}
-
\mathbf W_{t\wedge t_i}^{A_j}
\right).
\]
On the other hand, by the definition of the classical stochastic
integral for predictable step processes,
\[
\int_0^t\phi_j(s)\,d\mathbf W_s^{A_j}
=
\sum_{i=0}^{n-1}
X_{ij}
\left(
\mathbf W_{t\wedge t_{i+1}}^{A_j}
-
\mathbf W_{t\wedge t_i}^{A_j}
\right).
\]
Therefore,
\begin{equation}\label{eq:simple-P-representation}
\mathcal I_t(\eta)
=
\sum_{j=1}^m
\int_0^t\phi_j(s)\,d\mathbf W_s^{A_j},
\qquad 0\le t\le T,
\end{equation}
up to $P$-indistinguishability.
Notice that the original convention
$\mathbf1_{[t_i,t_{i+1})}$ for the simple random field and the
predictable convention $\mathbf1_{(t_i,t_{i+1}]}$ lead to the same
finite-increment expression above; hence no change in the definition of
the space-time stochastic integral is involved.

We next verify square integrability. By
Proposition~\ref{prop:P-qv-identification},
$[\mathbf W^{A_j}]^P
=
\langle\mathbf W^{A_j}\rangle$
up to $P$-indistinguishability. Hence
\begin{align*}
E_P\left[
\int_0^T|\phi_j(s)|^2
\,d[\mathbf W^{A_j}]_s^P
\right]
&=
E_P\left[
\sum_{i=0}^{n-1}
X_{ij}^2
\left(
\langle\mathbf W^{A_j}\rangle_{t_{i+1}}
-
\langle\mathbf W^{A_j}\rangle_{t_i}
\right)
\right].
\end{align*}
For each fixed pair $(t_i,t_{i+1})$,
Proposition~\ref{prop:quad-var} gives
\[
\langle\mathbf W^{A_j}\rangle_{t_{i+1}}
-
\langle\mathbf W^{A_j}\rangle_{t_i}
\le
\overline\sigma^2\lambda_{A_j}
(t_{i+1}-t_i)
\qquad\text{q.s.}
\]
Since only finitely many indices $i$ are involved, the corresponding
exceptional polar sets can be replaced by their finite union. Therefore,
\begin{align*}
E_P\left[
\int_0^T|\phi_j(s)|^2
\,d[\mathbf W^{A_j}]_s^P
\right]
&\le
\overline\sigma^2\lambda_{A_j}
\sum_{i=0}^{n-1}
(t_{i+1}-t_i)E_P[X_{ij}^2]
\\
&\le
\overline\sigma^2\lambda_{A_j}
\sum_{i=0}^{n-1}
(t_{i+1}-t_i)
\hat{\mathbb E}[X_{ij}^2]
<\infty.
\end{align*}
Hence $
\int_0^\cdot \phi_j(s)\,d\mathbf W_s^{A_j}$
is a continuous square-integrable $P$-martingale. Since the sum over
$j$ is finite, $\mathcal I(\eta)$ is also a continuous
square-integrable $P$-martingale.

As
the $A_j$ are mutually disjoint, Proposition~\ref{prop:P-qv-identification}
and the classical bracket formula yield
\begin{align*}
[\mathcal{I}(\eta)]^P_t
&=
\sum_{j=1}^{m}
\int_0^t|\phi_j(s)|^2\,d[\mathbf W^{A_j}]^P_s
\\
&=
\sum_{j=1}^{m}
\int_0^t|\phi_j(s)|^2\,d\langle\mathbf W^{A_j}\rangle_s
=
\mathcal A_t(\eta).
\end{align*}
This proves \eqref{eq:simple-bracket}.
\end{proof}

\begin{proposition}[BDG inequality for simple integrands]
\label{prop:simple-BDG}
Let $p\ge2$.  There exist constants $0<c_p\le C_p<\infty$, depending only
on $p$, such that for every
$\eta\in\mathbf M^{p,0}([0,T]\times\mathbb R)$,
\begin{align}
&c_p\underline{\sigma}^p
\hat{\mathbb E}
\left[
\left(
\int_0^T\int_{\mathbb R}
|\eta(s,x)|^2\,dxds
\right)^{p/2}
\right]
\le
\bar{\mathbb E}
\left[
\sup_{0\le t\le T}
|\mathcal{I}_t(\eta)|^p
\right]
\le
C_p\overline{\sigma}^p
\hat{\mathbb E}
\left[
\left(
\int_0^T\int_{\mathbb R}
|\eta(s,x)|^2\,dxds
\right)^{p/2}
\right].
\label{eq:simple-BDG}
\end{align}
\end{proposition}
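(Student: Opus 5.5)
The plan is to reduce the statement to the classical BDG inequality under each representing measure $P\in\mathcal P$, and then take suprema over $\mathcal P$. By Lemma~\ref{le:simple-bracket}, for each $P\in\mathcal P$ the process $\mathcal I(\eta)$ is a continuous square-integrable $P$-martingale whose bracket is $[\mathcal I(\eta)]^P=\mathcal A(\eta)$ up to $P$-indistinguishability. The classical BDG inequality for continuous martingales then gives, with universal constants $0<c_p\le C_p<\infty$ depending only on $p$,
\[
c_pE_P\bigl[\mathcal A_T(\eta)^{p/2}\bigr]\le E_P\Bigl[\sup_{0\le t\le T}|\mathcal I_t(\eta)|^p\Bigr]\le C_pE_P\bigl[\mathcal A_T(\eta)^{p/2}\bigr].
\]
Here $\sup_t|\mathcal I_t(\eta)|^p$ is $\mathcal B(\Omega)$-measurable by continuity, so $\bar{\mathbb E}$ of it makes sense.

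The key pathwise step is to compare $\mathcal A_T(\eta)$ with $\int_0^T\int_{\mathbb R}|\eta|^2dxds$. By \eqref{eq:A-simple} and Proposition~\ref{prop:quad-var}(iv), applied to the finitely many pairs $(t_i,t_{i+1})$ and sets $A_j$, we have, outside a finite union of polar sets,
\[
\underline\sigma^2\lambda_{A_j}(t_{i+1}-t_i)\le\langle\mathbf W^{A_j}\rangle_{t_{i+1}}-\langle\mathbf W^{A_j}\rangle_{t_i}\le\overline\sigma^2\lambda_{A_j}(t_{i+1}-t_i).
\]
Multiplying by $X_{ij}^2\ge0$ and summing gives, quasi-surely and hence $P$-a.s. for every $P$,
\[
\underline\sigma^2\int_0^T\int_{\mathbb R}|\eta|^2dxds\le\mathcal A_T(\eta)\le\overline\sigma^2\int_0^T\int_{\mathbb R}|\eta|^2dxds.
\]
Raising to the power $p/2$ and inserting this into the classical BDG inequality yields, for every $P$,
\[
c_p\underline\sigma^pE_P[\Theta]\le E_P\Bigl[\sup_t|\mathcal I_t(\eta)|^p\Bigr]\le C_p\overline\sigma^pE_P[\Theta],
\qquad
\Theta:=\Bigl(\int_0^T\int_{\mathbb R}|\eta|^2dxds\Bigr)^{p/2}.
\]

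Finally we take $\sup_{P\in\mathcal P}$ in all three terms; a supremum preserves both inequalities, since each side is bounded termwise for every $P$. It remains to identify $\sup_PE_P[\Theta]$ with $\hat{\mathbb E}[\Theta]$, which holds by Theorem~\ref{rep-thm} once $\Theta\in\mathbf L_G^1(\mathcal F)$. I expect this membership check to be the only delicate point. Writing $\Theta=\bigl(\sum_{i,j}X_{ij}^2(t_{i+1}-t_i)\lambda_{A_j}\bigr)^{p/2}$, it is a continuous function of finitely many $X_{ij}\in\mathbf L_G^p$ with growth $|x|^p$. Approximating each $X_{ij}$ in $\mathbf L_G^p$ by elements of $L_{ip}$ and using the elementary bound $|a^{p/2}-b^{p/2}|\le C(a^{p/2-1}+b^{p/2-1})|a-b|$ together with H\"older's inequality shows that $\Theta$ is an $\mathbf L_G^1$-limit of $L_{ip}$ random variables. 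The paper's definition of $\|\cdot\|_{\mathbf H^p}$ already presupposes this membership. With it, $\bar{\mathbb E}[\Theta]=\hat{\mathbb E}[\Theta]$, which gives \eqref{eq:simple-BDG}.
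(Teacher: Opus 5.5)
Your proposal is correct and follows essentially the same route as the paper: the classical BDG inequality under each $P\in\mathcal P$ via the bracket identification $[\mathcal I(\eta)]^P=\mathcal A(\eta)$ of Lemma~\ref{le:simple-bracket}, the quasi-sure two-sided comparison of $\mathcal A_T(\eta)$ from Proposition~\ref{prop:quad-var}(iv), and then a supremum over $\mathcal P$. Your extra check that $\bigl(\int_0^T\int_{\mathbb R}|\eta|^2\,dxds\bigr)^{p/2}\in\mathbf L_G^1(\mathcal F)$, so that its supremum of $E_P$ over $\mathcal P$ equals its $\hat{\mathbb E}$, makes explicit a step the paper leaves implicit.
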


\begin{proof}
Fix $P\in\mathcal P$.  By Lemma~\ref{le:simple-bracket} and the classical
BDG inequality,
\[
c_pE_P[\mathcal A_T(\eta)^{p/2}]
\le
E_P\left[
\sup_{0\le t\le T}|\mathcal{I}_t(\eta)|^p
\right]
\le
C_pE_P[\mathcal A_T(\eta)^{p/2}].
\]
The constants are independent of $P$. 
Observe that for every simple $\eta$,
\begin{equation}\label{eq:A-two-sided}
\underline{\sigma}^2
\int_0^T\int_{\mathbb R}|\eta(r,x)|^2\,dxdr
\le
\mathcal A_T(\eta)
\le
\overline{\sigma}^2
\int_0^T\int_{\mathbb R}|\eta(r,x)|^2\,dxdr.
\end{equation}
Indeed, after inserting $s$ and $t$ into the time partition of $\eta$,
this follows term by term from Proposition~\ref{prop:quad-var}. 
Taking the supremum over $P\in\mathcal P$ gives
\eqref{eq:simple-BDG}. 
\end{proof}

Notice that the maximal random variable is evaluated
with $\bar{\mathbb E}$; at this stage we do not assume that it already belongs
to $\mathbf L_G^1(\mathcal{F})$.

\begin{theorem}[BDG inequality]\label{th:BDG}
Let $p\ge2$, $T>0$, and
$\eta\in\mathbf H_G^p([0,T]\times\mathbb R)$. Then the stochastic
integral process
\[
\mathcal I_t(\eta)
=
\int_0^t\int_{\mathbb R}
\eta(s,x)\mathbf W(ds,dx),
\qquad 0\le t\le T,
\]
admits a quasi-surely continuous modification, still denoted by
$\mathcal I(\eta)$. For this modification, there exist constants
$0<c_p\le C_p<\infty$, depending only on $p$, such that
\begin{align}
&c_p\underline{\sigma}^p
\hat{\mathbb E}
\left[
\left(
\int_0^T\int_{\mathbb R}
|\eta(s,x)|^2\,dxds
\right)^{p/2}
\right]
\le
\bar{\mathbb E}
\left[
\sup_{0\le t\le T}
|\mathcal I_t(\eta)|^p
\right]
\le
C_p\overline{\sigma}^p
\hat{\mathbb E}
\left[
\left(
\int_0^T\int_{\mathbb R}
|\eta(s,x)|^2\,dxds
\right)^{p/2}
\right].
\label{eq:BDG}
\end{align}
\end{theorem}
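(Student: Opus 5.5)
The plan is to pass from the simple-integrand estimate in Proposition~\ref{prop:simple-BDG} to general $\eta\in\mathbf H_G^p([0,T]\times\mathbb R)$ by density and a Cauchy argument in the maximal norm. Since $\mathbf H_G^p$ is by definition the completion of $\mathbf M^{p,0}$ under $\|\cdot\|_{\mathbf H^p}$, we can choose simple fields $\eta_n$ with $\|\eta_n-\eta\|_{\mathbf H^p}\to0$. By passing to a subsequence we may assume $\|\eta_{n+1}-\eta_n\|_{\mathbf H^p}\le 4^{-n}$. Linearity of $\mathcal I$ on simple fields lets us apply the upper bound in \eqref{eq:simple-BDG} to $\eta_{n+1}-\eta_n$:
\[
\bar{\mathbb E}\Bigl[\sup_{0\le t\le T}|\mathcal I_t(\eta_{n+1})-\mathcal I_t(\eta_n)|^p\Bigr]\le C_p\overline\sigma^p 4^{-np}.
\]

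Next we obtain a quasi-surely continuous limit. Each $\mathcal I(\eta_n)$ has quasi-surely continuous paths, being a finite sum of increments of the continuous processes $\mathbf W^{A_j}$ multiplied by fixed random variables. The Markov inequality for the capacity $c$, which follows from $c(\cdot)=\sup_P P(\cdot)$, gives
\[
c\Bigl(\sup_t|\mathcal I_t(\eta_{n+1})-\mathcal I_t(\eta_n)|>2^{-n}\Bigr)\le C_p\overline\sigma^p2^{-np}.
\]
This is summable, and a Borel--Cantelli lemma for $c$ (valid by countable subadditivity of $c$) shows that outside a polar set $\mathcal I(\eta_n)$ converges uniformly on $[0,T]$. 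We call the limit $\tilde{\mathcal I}(\eta)$; it is continuous q.s. To see that it is a modification of $\mathcal I(\eta)$, fix $t$. Lemma~\ref{nle-1}(v) and $\mathbf H_G^p\subset\mathbf M_G^2$ give $\mathcal I_t(\eta_n)\to\mathcal I_t(\eta)$ in $\mathbf L_G^2$, hence q.s. along a further subsequence. Therefore $\tilde{\mathcal I}_t(\eta)=\mathcal I_t(\eta)$ q.s. for each $t$. One point needs care: the $\mathbf M^2$-norm is dominated by $T$ times the $\mathbf H^2$-norm, which in turn is dominated by the $\mathbf H^p$-norm via H\"older for the sublinear expectation, so the approximation in $\mathbf H^p$ does control the $\mathbf L_G^2$ convergence.

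Finally we pass to the limit in the two-sided inequality. Minkowski's inequality for $\bar{\mathbb E}[|\cdot|^p]^{1/p}$, which holds because $\bar{\mathbb E}$ is a supremum of linear expectations, shows that $\bar{\mathbb E}[\sup_t|\mathcal I_t(\eta_n)-\tilde{\mathcal I}_t(\eta)|^p]\to0$, using a telescoping sum together with Fatou's lemma under each $P$. Hence $\bar{\mathbb E}[\sup_t|\mathcal I_t(\eta_n)|^p]\to\bar{\mathbb E}[\sup_t|\tilde{\mathcal I}_t(\eta)|^p]$. On the right-hand side, $\hat{\mathbb E}[(\int\!\!\int|\eta_n|^2)^{p/2}]\to\hat{\mathbb E}[(\int\!\!\int|\eta|^2)^{p/2}]$ because $\|\cdot\|_{\mathbf H^p}$ is a norm and $\eta_n\to\eta$ in it. Both sides of \eqref{eq:simple-BDG} therefore converge, which yields \eqref{eq:BDG}.

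I expect the main obstacle to be the modification step. There one must reconcile the pathwise uniform limit, which is defined only on the complement of a polar set, with the $\mathbf L_G^2$-defined value $\mathcal I_t(\eta)$. One must also justify the capacity Borel--Cantelli lemma and Fatou's lemma for $\bar{\mathbb E}$ on non-$\mathbf L_G$ maximal functionals. Both are handled by working under each fixed $P\in\mathcal P$ with classical tools and then taking suprema, as in the proof of Proposition~\ref{prop:simple-BDG}.
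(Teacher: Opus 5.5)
Your proposal is correct and follows the paper's proof essentially step by step: simple approximation in $\mathbf H^p$, the simple BDG bound applied to differences, the capacity Markov inequality plus Borel--Cantelli to get a q.s. uniform limit, identification with $\mathcal I_t(\eta)$ through $\mathbf L_G^2$ convergence, and passage to the limit via the triangle inequality for $\bar{\mathbb E}[|\cdot|^p]^{1/p}$. Your telescoping argument and your subsequence-q.s. identification are only cosmetic variants of the paper's Fatou step and its uniqueness-in-capacity step. One small slip: the $\mathbf M^2$- and $\mathbf H^2$-norms coincide exactly, so no factor $T$ belongs there (as written your bound fails for $T<1$), but the domination of both by the $\mathbf H^p$-norm that you actually need is correct.
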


\begin{proof}
Choose
$\{\eta_n\}_{n\ge1}
\subset\mathbf M^{p,0}([0,T]\times\mathbb R)$
such that
$\|\eta_n-\eta\|_{\mathbf H^p}\to 0 $ as $n\to\infty$, 
and write
$\mathcal I_t^n:=\mathcal I_t(\eta_n)$,
 $0\le t\le T.$
 By Proposition~\ref{prop:simple-BDG},
\begin{equation}\label{eq:BDG-Cauchy}
\bar{\mathbb E}
\left[
\sup_{0\le t\le T}
|\mathcal I_t^n-\mathcal I_t^m|^p
\right]
\le
C_p\overline{\sigma}^p
\|\eta_n-\eta_m\|_{\mathbf H^p}^p.
\end{equation}
Hence $\{\mathcal I^n\}_{n\ge1}$ is a Cauchy sequence in the maximal
$L^p$ norm induced by $\bar{\mathbb E}$.

We now construct a quasi-surely continuous limit. Choose a subsequence
$\{\mathcal I^{n_k}\}_{k\ge1}$ such that
\[
\bar{\mathbb E}
\left[
\sup_{0\le t\le T}
|\mathcal I_t^{n_{k+1}}-\mathcal I_t^{n_k}|^p
\right]
\le
2^{-2kp},
\qquad k\ge1.
\]
Set
\[
D_k
:=
\sup_{t\in\mathbb Q\cap[0,T]}
|\mathcal I_t^{n_{k+1}}
-\mathcal I_t^{n_k}|,\qquad k\ge1.
\]
Each $\mathcal I^{n_k}$ has quasi-surely continuous paths, since it is
the stochastic integral of a simple integrand. Removing the countable
union of the corresponding polar exceptional sets, we may assume that
all the processes $\mathcal I^{n_k}$ are continuous. On this common
full-capacity set,
\[
D_k
=
\sup_{0\le t\le T}
|\mathcal I_t^{n_{k+1}}
-\mathcal I_t^{n_k}|.
\]

By the Markov inequality for the capacity $c$,
\[
c(D_k>2^{-k})
\le
2^{kp}\bar{\mathbb E}[D_k^p]
\le
2^{-kp}.
\]
Since $p\ge2$,
\[
\sum_{k=1}^{\infty}
c(D_k>2^{-k})<\infty.
\]
Then by the Borel-Cantelli Lemma under the capacity (see Lemma 6.1.9 in Peng \cite{P2019}),
\[
c\left(
\limsup_{k\to\infty}
\{D_k>2^{-k}\}
\right)=0.
\]
which means that $\limsup_{k\to\infty}
\{D_k>2^{-k}\} $ is  polar. Consequently, outside a polar set,
$D_k\le2^{-k}$
for all sufficiently large $k$. Hence
$\{\mathcal I^{n_k}\}$ is uniformly Cauchy on $[0,T]$ outside a polar
set and therefore converges uniformly  to a continuous process,
which we denote by $\widetilde{\mathcal I}$.

We next show that the whole sequence converges to
$\widetilde{\mathcal I}$ in the maximal $L^p$ norm. For every fixed $n$, by Fatou's
lemma under each $P\in\mathcal P$,
\[
E_P\left[
\sup_{0\le t\le T}
|\mathcal I_t^n-\widetilde{\mathcal I}_t|^p
\right]
\le
\liminf_{k\to\infty}
E_P\left[
\sup_{0\le t\le T}
|\mathcal I_t^n-\mathcal I_t^{n_k}|^p
\right].
\]
Taking the supremum over $P\in\mathcal P$ yields
\begin{align*}
\bar{\mathbb E}\left[
\sup_{0\le t\le T}
|\mathcal I_t^n-\widetilde{\mathcal I}_t|^p
\right]
&\le
\liminf_{k\to\infty}
\bar{\mathbb E}\left[
\sup_{0\le t\le T}
|\mathcal I_t^n-\mathcal I_t^{n_k}|^p
\right]
\\
&\le
C_p\overline{\sigma}^p
\liminf_{k\to\infty}
\|\eta_n-\eta_{n_k}\|_{\mathbf H^p}^p
\\
&=
C_p\overline{\sigma}^p
\|\eta_n-\eta\|_{\mathbf H^p}^p.
\end{align*}
Hence
\begin{equation}\label{eq:maximal-convergence}
\lim_{n\to\infty}\bar{\mathbb E}\left[
\sup_{0\le t\le T}
|\mathcal I_t^n-\widetilde{\mathcal I}_t|^p
\right]
=0.
\end{equation}

It remains to identify $\widetilde{\mathcal I}$ with the stochastic
integral process already defined by completion in $M_G^2([0,T]\times\mathbb R)$. Since $p\ge2$ and 
$\mathbf H_G^p([0,T]\times\mathbb R)
\subset
\mathbf M_G^2([0,T]\times\mathbb R)$,
 we have $\eta_n\rightarrow\eta$
in $\mathbf M_G^2([0,T]\times\mathbb R)$
as $n\to\infty$.
By the continuity of the stochastic integral from
$\mathbf{M}_G^2([0,T]\times\mathbb R)$ into $\mathbf L_G^2(\mathcal{F})$, for every fixed $t\in[0,T]$,
\[
\mathcal I_t^n
\longrightarrow
\mathcal I_t(\eta)
=
\int_0^t\int_{\mathbb R}
\eta(s,x)\mathbf W(ds,dx)
\qquad\text{in }\mathbf L_G^2.
\]
In particular,
$\mathcal I_t^n\to\mathcal I_t(\eta)$ in capacity. On the other hand,
\eqref{eq:maximal-convergence} implies
\[
\mathcal I_t^n
\longrightarrow
\widetilde{\mathcal I}_t
\qquad\text{in capacity}
\]
for every fixed $t$. By uniqueness of limits in capacity,
\[
\widetilde{\mathcal I}_t
=
\mathcal I_t(\eta)
\qquad\text{q.s.}
\]
for every fixed $t$. Thus $\widetilde{\mathcal I}$ is a quasi-surely
continuous modification of the stochastic integral process. We henceforth
denote this modification again by $\mathcal I(\eta)$.

Finally, we pass the simple integrand BDG inequality to the limit.
Taking $p$th roots in \eqref{eq:simple-BDG} gives
\begin{equation}\label{ineq-1}
c_p^{1/p}\underline{\sigma}
\|\eta_n\|_{\mathbf H^p}
\le
\left\{
\bar{\mathbb E}
\left[
\sup_{0\le t\le T}
|\mathcal I_t^n|^p
\right]
\right\}^{1/p}
\le
C_p^{1/p}\overline{\sigma}
\|\eta_n\|_{\mathbf H^p}.
\end{equation}
As $n\to \infty$, since
\[
\bigl|
\|\eta_n\|_{\mathbf H^p}
-
\|\eta\|_{\mathbf H^p}
\bigr|
\le
\|\eta_n-\eta\|_{\mathbf H^p}
\longrightarrow0,
\]
and, by the triangle inequality for the robust maximal $L^p$ norm,
\[
\begin{aligned}
&\left|
\left\{
\bar{\mathbb E}
\left[
\sup_{0\le t\le T}
|\mathcal I_t^n|^p
\right]
\right\}^{1/p}
-
\left\{
\bar{\mathbb E}
\left[
\sup_{0\le t\le T}
|\mathcal I_t(\eta)|^p
\right]
\right\}^{1/p}
\right|
\le
\left\{
\bar{\mathbb E}
\left[
\sup_{0\le t\le T}
|\mathcal I_t^n-\mathcal I_t(\eta)|^p
\right]
\right\}^{1/p}
\longrightarrow0,
\end{aligned}
\]
we may pass to the limit in the preceding two-sided inequality  \eqref{ineq-1}. Raising
the resulting inequality to the $p$th power proves
\eqref{eq:BDG}.
\end{proof}

\begin{corollary}\label{cor:BDG-fixed}
Let $p\ge2$ and
$\eta\in\mathbf H_G^p([0,T]\times\mathbb R)$. Then
$\mathcal I_T(\eta)\in\mathbf L_G^p(\mathcal F_T)$ and
\begin{equation}\label{eq:BDG-fixed}
\hat{\mathbb E}
\left[
|\mathcal I_T(\eta)|^p
\right]
\le
C_p\overline{\sigma}^p
\hat{\mathbb E}
\left[
\left(
\int_0^T\int_{\mathbb R}
|\eta(s,x)|^2\,dxds
\right)^{p/2}
\right].
\end{equation}
\end{corollary}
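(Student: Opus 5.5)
The plan is to deduce Corollary~\ref{cor:BDG-fixed} from Theorem~\ref{th:BDG} together with the approximation used in its proof. There are two separate claims, membership $\mathcal I_T(\eta)\in\mathbf L_G^p(\mathcal F_T)$ and the moment bound, and I will prove them in that order.

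\textbf{Step 1: membership.} Choose simple $\eta_n\in\mathbf M^{p,0}([0,T]\times\mathbb R)$ with $\|\eta_n-\eta\|_{\mathbf H^p}\to0$. Each $\mathcal I_T(\eta_n)=\sum_{i,j}X_{ij}\mathbf W([t_i,t_{i+1})\times A_j)$ is a finite sum of products of an element of $\mathbf L_G^p(\mathcal F_{t_i})$ with a $G$-normal increment independent of $\mathcal F_{t_i}$. I will check that such a product lies in $\mathbf L_G^p(\mathcal F_T)$:
\begin{itemize}
\item for $X_{ij}$ in $L_{ip}(\mathcal F_{t_i})$ this is immediate, since the product is again in $L_{ip}(\mathcal F_T)$;
\item for general $X_{ij}$, approximate it by cylinder variables and use independence to get $\hat{\mathbb E}[|X\,\mathbf W|^p]=\hat{\mathbb E}[|X|^p]\,\hat{\mathbb E}[|\mathbf W|^p]$, so the approximating products converge in $\mathbf L_G^p$.
\end{itemize}
Applying the simple-integrand estimate \eqref{eq:simple-BDG} to $\eta_n-\eta_m$ at the fixed time $T$, and noting that on $\mathbf L_G^p$ we have $\bar{\mathbb E}=\hat{\mathbb E}$, gives
\[
\hat{\mathbb E}\bigl[|\mathcal I_T(\eta_n)-\mathcal I_T(\eta_m)|^p\bigr]\le C_p\overline\sigma^p\|\eta_n-\eta_m\|_{\mathbf H^p}^p .
\]
So $\{\mathcal I_T(\eta_n)\}$ is Cauchy in the complete space $\mathbf L_G^p(\mathcal F_T)$ and converges to some $Y$. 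Since $p\ge2$, $\mathbf L_G^p$-convergence implies $\mathbf L_G^2$-convergence. Also $\eta_n\to\eta$ in $\mathbf M_G^2$, so by Lemma~\ref{nle-1}(v) we have $\mathcal I_T(\eta_n)\to\mathcal I_T(\eta)$ in $\mathbf L_G^2$. Uniqueness of limits then gives $Y=\mathcal I_T(\eta)$ q.s., hence $\mathcal I_T(\eta)\in\mathbf L_G^p(\mathcal F_T)$.

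\textbf{Step 2: the estimate.} The pointwise bound $|\mathcal I_T(\eta)|^p\le\sup_{0\le t\le T}|\mathcal I_t(\eta)|^p$ holds for the q.s. continuous modification of Theorem~\ref{th:BDG}, which agrees q.s. with $\mathcal I_T(\eta)$ at the fixed time $T$. Because $\mathcal I_T(\eta)\in\mathbf L_G^p$, we have $\hat{\mathbb E}[|\mathcal I_T(\eta)|^p]=\bar{\mathbb E}[|\mathcal I_T(\eta)|^p]$. Combining these,
\[
\hat{\mathbb E}[|\mathcal I_T(\eta)|^p]\le\bar{\mathbb E}\Bigl[\sup_{0\le t\le T}|\mathcal I_t(\eta)|^p\Bigr],
\]
and the upper half of \eqref{eq:BDG} gives \eqref{eq:BDG-fixed}. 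Alternatively, one can pass to the limit directly in the fixed-time simple bound via the $\mathbf L_G^p$-convergence from Step 1, which avoids the modification altogether.

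\textbf{Main obstacle.} Step 2 is routine; the only delicate point is Step 1, namely that the simple integrals already lie in $\mathbf L_G^p(\mathcal F_T)$ rather than merely having finite $\bar{\mathbb E}$-moments. The independence and product-norm argument above is what I would use to handle this.
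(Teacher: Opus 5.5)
Your proposal is correct and is essentially the paper's proof. Both show that $\{\mathcal I_T(\eta_n)\}$ is Cauchy in $\mathbf L_G^p(\mathcal F_T)$ via the simple-integrand bound, identify the limit through the $\mathbf L_G^2$-continuity of the integral, and then pass to the limit in the fixed-time estimate; that last step is your ``alternative'', while your main route through the continuous modification of Theorem~\ref{th:BDG} is equally valid.

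The membership of simple integrals in $\mathbf L_G^p(\mathcal F_T)$, which you call the main obstacle, is already taken as given in Section~2.3. If you do verify it, note that for a general Borel $A_j$ the increment $\mathbf W([t_i,t_{i+1})\times A_j)$ is itself an $\mathbf L_G^p$-limit, so the product is not literally in $L_{ip}(\mathcal F_T)$; one further approximation, using the same independence argument, is needed.
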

\begin{proof}
Let $\eta_n\in\mathbf M^{p,0}([0,T]\times\mathbb R)$ be the simple
approximations used in the proof of Theorem~\ref{th:BDG}. For $m,n\ge1$,
the upper bound in Proposition~\ref{prop:simple-BDG} gives
\[
\begin{aligned}
\|\mathcal I_T(\eta_n)-\mathcal I_T(\eta_m)\|_{\mathbf L_G^p}^p
&=\hat{\mathbb E}\!
\left[|\mathcal I_T(\eta_n-\eta_m)|^p\right]\\
&\le
\bar{\mathbb E}\!
\left[\sup_{0\le t\le T}|\mathcal I_t(\eta_n-\eta_m)|^p\right]\\
&\le
C_p\overline\sigma^p
\|\eta_n-\eta_m\|_{\mathbf H^p}^p.
\end{aligned}
\]
Hence $\{\mathcal I_T(\eta_n)\}$ is Cauchy in the complete space
$\mathbf L_G^p(\mathcal F_T)$; let its limit be $Y$.
Letting $n\to\infty$,
since $p\ge2$ and $\mathbf H_G^p([0,T]\times\mathbb R)\subset\mathbf M_G^2([0,T]\times\mathbb R)$, we also have
$\eta_n\to\eta$ in $\mathbf M_G^2([0,T]\times\mathbb R)$ and then
\[
\mathcal I_T(\eta_n)\longrightarrow\mathcal I_T(\eta)
\qquad\text{in }\mathbf L_G^2(\mathcal F_T).
\]
On the other hand, convergence to $Y$ in $\mathbf L_G^p$ implies
convergence to $Y$ in $\mathbf L_G^2$. Therefore
$Y=\mathcal I_T(\eta)$ in $\mathbf L_G^2(\mathcal F_T)$, and hence quasi-surely. This
proves $\mathcal I_T(\eta)\in\mathbf L_G^p(\mathcal F_T)$.
Finally, letting $n\to\infty$ in the corresponding fixed-time estimate for
$\eta_n$ gives \eqref{eq:BDG-fixed}.
\end{proof}
\begin{remark}
The use of $\bar{\mathbb E}$ in the maximal terms of
\eqref{eq:simple-BDG} and \eqref{eq:BDG} is deliberate.
The representation
$\hat{\mathbb E}[X]
=
\sup_{P\in\mathcal P}E_P[X]
$
is initially available for $X\in\mathbf L_G^1(\mathcal{F})$, whereas, before the
BDG inequality is established, it is not yet known that
$\sup_{0\le t\le T}
|\mathcal I_t(\eta)|^p$
belongs to $\mathbf L_G^1(\mathcal{F})$. For a nonnegative Borel measurable random
variable $X$, however, the extended upper expectation
$\bar{\mathbb E}[X]
:=
\sup_{P\in\mathcal P}E_P[X]$
is well defined. Thus the use of $\bar{\mathbb E}$ avoids any circularity.
Whenever $X\in\mathbf L_G^1(\mathcal{F})$, one has
$\bar{\mathbb E}[X]=\hat{\mathbb E}[X]$.
\end{remark}

\section{Stochastic wave equations driven by space-time $G$-white noise}

We now apply the stochastic integration theory and the BDG inequality  to study one-dimensional stochastic wave equations under sublinear expectation. 
Throughout this section,
$L>0$ is fixed and the equations are considered on
$[0,\infty)\times(0,L)$ with homogeneous Dirichlet boundary conditions. Whenever
a finite time horizon is needed for an estimate or a functional space argument,
$T>0$ is arbitrary. 

Our treatment follows Walsh's random field approach for the classical stochastic wave
equation. 
In particular, we use the deterministic Green kernel estimates from
\cite[Secs.~3.4, 4.3.2 and Appendix~B.8]{DS}. For notational convenience,
$\mathbf S_G^p([0,T]\times[0,L])$ and $\mathbf H_G^p([0,T]\times[0,L])$ 
denote the corresponding spaces obtained by
restricting the spatial variable to $[0,L]$; stochastic integrands are extended
by zero outside $[0,L]$ whenever necessary.

\subsection{The Dirichlet wave kernel and convolution estimates}

Let $G_L$ denote the Green kernel of the one-dimensional wave operator $\partial_{tt}-\partial_{xx}$ on
$(0,L)$ with homogeneous Dirichlet boundary conditions. By Duffy \cite{Duffy} and Dalang and  Sanz-Sol\'e \cite{DS}, 
\begin{align}
G_L(t,x,y)&=\frac{1}{2}\sum_{m=-\infty}^{\infty}\left[\mathbf{1}_{|x-y-2mL|\leq t}-\mathbf{1}_{|x+y-2mL|\leq t}\right]\label{eq:GL-image}\\
&=\sum_{m=1}^\infty\frac{2}{\pi m}\sin\left(\frac{m\pi x}{L}\right)\sin\left(\frac{m\pi y}{L}\right)\sin\left(\frac{m\pi t}{L}\right), \quad t\geq0, x,y\in[0,L].\label{eq:GL-series}
\end{align} 
Then for any fixed $(t,x)\in(0,\infty)\times[0,L]$, it is easy to check that $G_L(t-\cdot,x,\cdot)\in L^2([0,t]\times[0,L])$. Fix an arbitrary $T>0$. By Lemma B.8.1 in Dalang and Sanz-Sol\'e \cite{DS}, for
$0\le s\le t\le T$ and $x,y\in[0,L]$,
\begin{align}
\int_0^t\int_0^L
|G_L(t-r,x,z)-G_L(t-r,y,z)|^2\,dzdr
&\le t|x-y|,
\label{eq:est-g-x}\\
\int_0^s\int_0^L
|G_L(t-r,x,z)-G_L(s-r,x,z)|^2\,dzdr
&\le 2s|t-s|,
\label{eq:est-g-t}\\
\int_s^t\int_0^L
|G_L(t-r,x,z)|^2\,dzdr
&\le L|t-s|.
\label{eq:est-g-t2}
\end{align}

 For $(t,x)\in[0,T]\times[0,L]$, define the deterministic space-time kernel
\begin{equation}
\label{eq:Ktx}
K_{t,x}(r,z)
:=
\mathbf 1_{[0,t)}(r)G_L(t-r;x,z),
\qquad (r,z)\in[0,T]\times[0,L].
\end{equation}
We have the following useful  estimates.
\begin{lemma}
\label{lem:wave-kernel}
For every $T>0$, there exists $C=C(T,L)>0$ such that, for all
$(t,x),(s,y)\in[0,T]\times[0,L]$,
\begin{align}
\sup_{r\ge0}\sup_{x\in[0,L]}
\int_0^L |G_L(r;x,z)|^2dz
&\le \frac{L}{3},
\label{eq:GL-uniform-L2}\\
\|K_{t,x}-K_{s,y}\|_{L^2([0,T]\times[0,L])}^2
&\le C\bigl(|t-s|+|x-y|\bigr),
\label{eq:K-L2-inc}\\
\|K_{t,x}-K_{s,y}\|_{L^1([0,T]\times[0,L])}
&\le C\bigl(|t-s|+|x-y|\bigr).
\label{eq:K-L1-inc}
\end{align}
\end{lemma}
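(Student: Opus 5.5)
We will prove the three estimates directly from the series and image representations of $G_L$ and the estimates \eqref{eq:est-g-x}--\eqref{eq:est-g-t2}.

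\emph{Proof of \eqref{eq:GL-uniform-L2}.} Use the series \eqref{eq:GL-series}. For fixed $r,x$, the functions $z\mapsto\sqrt{2/L}\sin(m\pi z/L)$ form an orthonormal basis of $L^2(0,L)$. Parseval's identity then gives
\[
\int_0^L|G_L(r;x,z)|^2dz=\frac{L}{2}\sum_{m\ge1}\frac{4}{\pi^2m^2}\sin^2\!\Big(\frac{m\pi x}{L}\Big)\sin^2\!\Big(\frac{m\pi r}{L}\Big)\le \frac{2L}{\pi^2}\sum_{m\ge1}\frac1{m^2}=\frac{L}{3}.
\]
This bound is uniform in $r\ge0$ and $x\in[0,L]$. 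For each fixed $r$, the series converges in $L^2(dz)$, so Parseval applies without any further justification.

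\emph{Proof of \eqref{eq:K-L2-inc}.} By symmetry, assume $s\le t$, and split via the intermediate point $(t,y)$:
\[
\|K_{t,x}-K_{s,y}\|_{L^2}^2\le 2\|K_{t,x}-K_{t,y}\|_{L^2}^2+2\|K_{t,y}-K_{s,y}\|_{L^2}^2.
\]
The first term equals $\int_0^t\int_0^L|G_L(t-r,x,z)-G_L(t-r,y,z)|^2dzdr$, which is at most $t|x-y|\le T|x-y|$ by \eqref{eq:est-g-x}. For the second term, $K_{t,y}-K_{s,y}$ equals $G_L(t-r,y,z)-G_L(s-r,y,z)$ on $[0,s)$ and $G_L(t-r,y,z)$ on $[s,t)$. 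The two pieces are controlled by \eqref{eq:est-g-t} and \eqref{eq:est-g-t2} respectively, which gives a bound $(2T+L)|t-s|$. Hence $C=2\max(T,2T+L)$ suffices here.

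\emph{Proof of \eqref{eq:K-L1-inc}.} This is the step needing care, since Cauchy--Schwarz would only give a square root. Instead we use the image representation \eqref{eq:GL-image}. For $r<t\le T$ and $z\in[0,L]$, only finitely many $m$ contribute: those with $|m|\le (T+2L)/(2L)$, say $N=N(T,L)$ of them. Each term is $\pm\frac12$ times an indicator of a band of the form $\{(r,z):|x\mp z-2mL|\le t-r\}$, i.e.\ a triangle/band in the $(r,z)$-plane bounded by characteristic lines through $(t,x\pm 2mL)$ or its reflection. Thus
\[
|K_{t,x}-K_{s,y}|\le \frac12\sum_{|m|\le N}\sum_{\pm}\big|\mathbf 1_{\{r<t,\,|x\mp z-2mL|\le t-r\}}-\mathbf 1_{\{r<s,\,|y\mp z-2mL|\le s-r\}}\big|.
\]
Each summand is the indicator of the symmetric difference of two backward light cones: one with apex $(t,x')$ and one with apex $(s,y')$, where $|x'-y'|=|x-y|$, both intersected with $[0,T]\times[0,L]$. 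A backward cone $\{(r,w):r<t,|w-x'|\le t-r\}$ has $z$-section of length $2(t-r)$ at time $r$. When the apex moves by $(t-s,x'-y')$, the endpoints of each section move by at most $|t-s|+|x-y|$, so each section's symmetric difference has length at most $2(|t-s|+|x-y|)$. For $r\in[s,t)$ the section has length at most $2(t-r)\le 2|t-s|$. Integrating over $r\in[0,T]$ bounds each symmetric-difference area by $2T(|t-s|+|x-y|)+2|t-s|^2\le (2T+2T)(|t-s|+|x-y|)$. Summing over the $4N+2$ terms yields \eqref{eq:K-L1-inc} with a constant depending only on $T,L$.

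The main obstacle is exactly this $L^1$ bound. The point is that the kernel is a finite signed sum of indicators of light cones, so that its $L^1$ increments are linear in the displacement rather than of square-root order.
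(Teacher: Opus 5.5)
Your proof is correct. For \eqref{eq:GL-uniform-L2} and \eqref{eq:K-L2-inc} it coincides with the paper's: Parseval on the sine series, and splitting the increment so that \eqref{eq:est-g-x}--\eqref{eq:est-g-t2} apply. Your route to the $L^1$ bound \eqref{eq:K-L1-inc}, however, is genuinely different.

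The paper never looks at light cones. It notes that, off a null set of characteristic lines, the image sum \eqref{eq:GL-image} takes only the values $0,\pm\frac12$. This holds because the image points $\pm z+2mL$ alternate in sign along $\mathbb R$, so every interval contains a signed count in $\{-1,0,1\}$. Hence $d=K_{t,x}-K_{s,y}\in\{0,\pm\frac12,\pm1\}$ a.e.\ and $|d|\le 2d^2$. Integrating reduces \eqref{eq:K-L1-inc} to \eqref{eq:K-L2-inc} with constant $2C$ and no new computation.

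You instead use only that $O(1+T/L)$ images meet $[0,T]\times[0,L]$, and you bound each symmetric difference of backward cones through the displacement of the section endpoints. This sidesteps the value-structure claim, which is true but needs the alternating-sign observation that the paper leaves implicit. Your argument is also fully self-contained. Since $|d|\le 2N+1$, you could even recover \eqref{eq:K-L2-inc} from your $L^1$ bound via $d^2\le(2N+1)|d|$, without the cited kernel estimates. The price is a constant of order $T(1+T/L)$ that grows with the number of reflections.

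One minor slip: you first call $N$ the number of contributing $m$, then use it as the bound on $|m|$. Under the latter reading your count of $4N+2$ terms is right.
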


\begin{proof}{}
The $L^2$ increment estimate \eqref{eq:K-L2-inc} follows directly from
\eqref{eq:est-g-x}, \eqref{eq:est-g-t} and \eqref{eq:est-g-t2}. The uniform bound \eqref{eq:GL-uniform-L2} follows from the sine series
representation \eqref{eq:GL-series} and Parseval's identity:
\[
\int_0^L |G_L(r;x,z)|^2dz
=
\frac{2L}{\pi^2}\sum_{m=1}^{\infty}\frac{1}{m^2}
\sin^2\frac{m\pi x}{L}\sin^2\frac{m\pi r}{L}\le
\frac{2L}{\pi^2}
\sum_{m=1}^{\infty}\frac1{m^2}
=
\frac{L}{3}.
\]
Finally, the image representation \eqref{eq:GL-image} shows that, away from a
Lebesgue-null set of characteristic boundaries, $K_{t,x}$ takes values in
$\{0,-\tfrac12,\tfrac12\}$. Hence
$|K_{t,x}-K_{s,y}|\le2|K_{t,x}-K_{s,y}|^2$ a.e., and
\eqref{eq:K-L1-inc} follows from \eqref{eq:K-L2-inc}.
\end{proof}

\begin{lemma}
\label{lem:weighted-wave}
Let $p\ge2$, $T>0$, and
$\eta\in\mathbf S_G^p([0,T]\times[0,L])$.
\begin{enumerate}
\item[(i)] If $h\in L^2([0,T]\times[0,L])$ is deterministic, then
$h\eta\in\mathbf H_G^p([0,T]\times[0,L])$ and
\begin{equation}
\label{eq:h-eta-Hp}
\hat{\mathbb E}
\left[
\left(
\int_0^T\int_0^L
h(r,z)^2|\eta(r,z)|^2\,dzdr
\right)^{p/2}
\right]
\le
\|h\|_{L^2}^p
\|\eta\|_{\mathbf S^p}^p.
\end{equation}
Consequently,
\begin{equation}
\label{eq:weighted-stochastic-wave}
\left\|
\int_0^T\int_0^L
h(r,z)\eta(r,z)\mathbf W(dr,dz)
\right\|_{\mathbf L_G^p}
\le
C_p^{1/p}\overline\sigma
\|h\|_{L^2}\|\eta\|_{\mathbf S^p}.
\end{equation}

\item[(ii)] If $h\in L^1([0,T]\times[0,L])$ is deterministic, then the Bochner
integral of $h\eta$ is well defined in
$\mathbf L_G^p(\mathcal F_T)$ and
\begin{equation}
\label{eq:weighted-deterministic-wave}
\left\|
\int_0^T\int_0^L
h(r,z)\eta(r,z)\,dzdr
\right\|_{\mathbf L_G^p}
\le
\|h\|_{L^1}\|\eta\|_{\mathbf S^p}.
\end{equation}
\end{enumerate}
\end{lemma}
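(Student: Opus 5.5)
The plan is to prove both estimates first for simple $\eta$ and then pass to the limit by density in $\mathbf S^p$. For $\eta$ simple, i.e.
\[
\eta=\sum_{i,j}X_{ij}\mathbf 1_{[t_i,t_{i+1})}(r)\mathbf 1_{A_j}(z)\in\mathbf M^{p,0},
\]
the integral $\int\!\!\int h^2|\eta|^2\,dzdr$ equals $\sum_{i,j}X_{ij}^2\,a_{ij}$ with $a_{ij}=\int_{t_i}^{t_{i+1}}\int_{A_j}h^2\ge0$ and $\sum a_{ij}=\|h\|_{L^2}^2$.

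For (i) I would apply Minkowski's inequality in $\mathbf L_G^{p/2}$, which is a norm since $p/2\ge1$ and $\hat{\mathbb E}$ is sublinear and monotone. This gives
\[
\Bigl\|\int\!\!\int h^2|\eta|^2\Bigr\|_{p/2}\le\int_0^T\!\!\int_0^L h(r,z)^2\,\||\eta(r,z)|^2\|_{p/2}\,dzdr\le\|h\|_{L^2}^2\,\|\eta\|_{\mathbf S^p}^2 .
\]
Here I use $\||\eta(r,z)|^2\|_{p/2}=\hat{\mathbb E}[|\eta(r,z)|^p]^{2/p}\le\|\eta\|_{\mathbf S^p}^2$, and for simple $\eta$ the inner integral is really the finite sum $\sum a_{ij}\,\hat{\mathbb E}[|X_{ij}|^p]^{2/p}$. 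Raising to the power $p/2$ gives \eqref{eq:h-eta-Hp}.

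To see that $h\eta\in\mathbf H_G^p$, I need $h\eta$ to be approximable by simple fields in $\mathbf H^p$. I would first approximate $h$ in $L^2$ by step functions $h_k$, so that $h_k\eta$ is simple when $\eta$ is simple, after refining the partitions. By the same bound applied to $(h-h_k)\eta$, we get $\|(h-h_k)\eta\|_{\mathbf H^p}\le\|h-h_k\|_{L^2}\|\eta\|_{\mathbf S^p}\to0$. For general $\eta\in\mathbf S_G^p$ take simple $\eta_n\to\eta$ in $\mathbf S^p$. The bound gives $\|h(\eta_n-\eta_m)\|_{\mathbf H^p}\le\|h\|_{L^2}\|\eta_n-\eta_m\|_{\mathbf S^p}$, so $h\eta_n$ is Cauchy in $\mathbf H_G^p$. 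Its limit must be identified with $h\eta$, i.e. with the field used in the definition of $\int h\eta\,d\mathbf W$ via Lemma \ref{Ito ineq}. This identification is the step I expect to be the main (mostly bookkeeping) obstacle. I would handle it by noting that convergence in $\mathbf H^p$ implies convergence in $\mathbf M^2$, so the stochastic integrals of $h\eta_n$ converge in $\mathbf L_G^2$. By \eqref{neq-1} they also converge to the integral defined through $\mathbf S^2$-approximation, so the two definitions agree.

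Then \eqref{eq:weighted-stochastic-wave} follows from Corollary \ref{cor:BDG-fixed} applied to $h\eta\in\mathbf H_G^p$ together with \eqref{eq:h-eta-Hp}. One takes $p$th roots and extends $h\eta$ by zero outside $[0,L]$.

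For (ii), for simple $\eta$ the Bochner integral is the finite sum $\sum X_{ij}b_{ij}$ with $b_{ij}=\int_{t_i}^{t_{i+1}}\int_{A_j}h$. Minkowski's inequality in $\mathbf L_G^p$ gives
\[
\Bigl\|\sum X_{ij}b_{ij}\Bigr\|_p\le\sum|b_{ij}|\,\|X_{ij}\|_p\le\|h\|_{L^1}\|\eta\|_{\mathbf S^p}.
\]
Applying this to $\eta_n-\eta_m$ shows the integrals form a Cauchy sequence in $\mathbf L_G^p(\mathcal F_T)$. The limit is therefore well defined, independent of the approximating sequence by the same bound, and consistent with \eqref{b-int1} when $h\in L^2$. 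The estimate \eqref{eq:weighted-deterministic-wave} passes to the limit.
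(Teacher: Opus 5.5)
Your proof is correct in substance. It follows the paper's overall scheme (simple case, Cauchy sequence, limit, fixed-time BDG), but two steps are done differently.

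\emph{Simple-case bound.} For simple $\eta$ you bound $\|h\eta\|_{\mathbf H^p}^2$ by Minkowski's inequality in $\mathbf L_G^{p/2}$. The paper instead uses H\"older's inequality with respect to $h^2\,dzdr$ followed by sub-additivity, i.e.\ $\hat{\mathbb E}[(\int\!\!\int h^2|\eta|^2)^{p/2}]\le\|h\|_{L^2}^{p-2}\int\!\!\int h^2\,\hat{\mathbb E}[|\eta|^p]$. Both give \eqref{eq:h-eta-Hp}.

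\emph{What you approximate.} Your bound already holds for arbitrary $h\in L^2$ (resp.\ $h\in L^1$ in (ii)) once $\eta$ is simple, so you only approximate $\eta$; the paper approximates $h$ and $\eta$ simultaneously. In (ii) you define the $L^1$-weighted Bochner integral directly as the $\mathbf L_G^p$-limit of $\sum_{i,j}X_{ij}b_{ij}$, rather than passing through $\mathbf M_G^1$. This is slightly leaner and fully adequate.

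The real difference is the identification step in (i). You identify the $\mathbf H_G^p$-limit $\xi$ of $h\eta_n$ only through its stochastic integral, showing that $\int\xi\,d\mathbf W$ equals the integral $\int h\eta\,d\mathbf W$ of Section~2. That is exactly what \eqref{eq:weighted-stochastic-wave} and its later uses need, so that part is complete.

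However, the Section~2 integral is defined without reference to any element of $\mathbf M_G^2$ representing $h\eta$. Agreement of the two integrals therefore does not show that the concrete pointwise product $h\eta$ represents $\xi$. For non-simple $\eta$ you have thus not literally proved the first assertion of (i), nor \eqref{eq:h-eta-Hp} with $\int\!\!\int h^2|\eta|^2$ on the left.

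The paper closes this by fixing $P\in\mathcal P$, estimating $E_P[(\int\!\!\int|h_n\eta_n-h\eta|^2)^{p/2}]$ classically, and taking the supremum over $\mathcal P$. Your own Minkowski bound does the same job: run it under each $P$ on (a jointly measurable version of) $\eta_n-\eta$, using $E_P[|\cdot|^p]\le\hat{\mathbb E}[|\cdot|^p]$ pointwise in $(r,z)$, to get
\[
E_P\Bigl[\Bigl(\int_0^T\!\!\int_0^L h^2|\eta_n-\eta|^2\,dzdr\Bigr)^{p/2}\Bigr]^{2/p}\le\|h\|_{L^2}^2\,\|\eta_n-\eta\|_{\mathbf S^p}^2 .
\]
Taking the supremum over $P$ shows $h\eta_n\to h\eta$ in the robust $\mathbf H^p$ norm.
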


\begin{proof}
(i) First, for $p\geq2$,
suppose  that $\eta
\in\mathbf M^{p,0}([0,T]\times [0,L])$ is a simple  random field and that $h$ is
a deterministic simple function. By taking a common refinement of their
time-space partitions, $h\eta$ belongs to $\mathbf M^{p,0}([0,T]\times[0,L])$.

By H\"older's inequality,
\begin{align*}
\hat{\mathbb E}\left[
\left(
\int_0^T\int_0^L
h(r,z)^2|\eta(r,z)|^2\,dzdr
\right)^{p/2}
\right]
&\le \hat{\mathbb{E}}\left[\left(
\int_0^T\int_0^L h(r,z)^2\,dzdr
\right)^{p/2-1}
\int_0^T\int_0^L
h(r,z)^2|\eta(r,z)|^p\,dzdr\right]\\
&\le \left(
\int_0^T\int_0^L h(r,z)^2\,dzdr
\right)^{p/2-1}
\int_0^T\int_0^L
h(r,z)^2\hat{\mathbb{E}}\left[|\eta(r,z)|^p\right]\,dzdr\\
&\le
\|h\|_{L^2}^{p-2}
\|h\|_{L^2}^{2}
\|\eta\|_{\mathbf S^p}^p
=
\|h\|_{L^2}^{p}
\|\eta\|_{\mathbf S^p}^p.
\end{align*}
Equivalently,
\begin{equation}
\label{eq:simple-H-norm}
\|h\eta\|_{\mathbf H_G^p}
\le
\|h\|_{L^2}\|\eta\|_{\mathbf S^p}.
\end{equation}

Next, for $h\in L^2([0,T]\times[0,L])$ and  $\eta\in\mathbf S_G^p([0,T]\times[0,L])$, there exist deterministic simple functions $\{h_n\}_{n=1}^\infty$ and $\{\eta_n\}_{n=1}^\infty\subset\mathbf M^{p,0}([0,T]\times[0,L])$ such that $\|h_n-h\|_{L^2}\rightarrow0$ and $\|\eta_n-\eta\|_{\mathbf S^p}\rightarrow0 $ as $n\to\infty$.
After taking common refinements of the underlying time-space partitions,
$h_n\eta_n$ belongs to 
$\mathbf{M}^{p,0}([0,T]\times[0,L])$.

For $m,n\ge1$, by \eqref{eq:simple-H-norm},
\begin{align*}
\|h_n\eta_n-h_m\eta_m\|_{\mathbf H_G^p}
&\le
\|(h_n-h_m)\eta_n\|_{\mathbf H_G^p}
+
\|h_m(\eta_n-\eta_m)\|_{\mathbf H_G^p}
\\
&\le
\|h_n-h_m\|_{L^2}\|\eta_n\|_{\mathbf S^p}
+
\|h_m\|_{L^2}
\|\eta_n-\eta_m\|_{\mathbf S^p}.
\end{align*}
Since $\{\eta_n\}$ is bounded in $\mathbf S_G^p$ and $\{h_n\}$ is
bounded in $L^2$, the right-hand side tends to zero as $m,n\to\infty$.
Thus $\{h_n\eta_n\}$ is a Cauchy sequence in $\mathbf H_G^p([0,T]\times[0,L])$.

To identify the limit with the pointwise product $h\eta$, fix
$P\in\mathcal P$. By the classical weighted $L^p(P)$ estimate,
\begin{align*}
& E_P\left[
\left(
\int_0^T\int_0^L
|h_n(r,z)\eta_n(r,z)-h(r,z)\eta(r,z)|^2\,dzdr
\right)^{p/2}
\right]^{1/p}
\\
&\qquad\le
\|h_n-h\|_{L^2}\|\eta_n\|_{\mathbf S^p}
+
\|h\|_{L^2}\|\eta_n-\eta\|_{\mathbf S^p}.
\end{align*}
Taking the supremum over $P\in\mathcal P$ shows that the left-hand side
converges to zero in the robust $\mathbf H^p$ norm. Hence the completion
limit of $h_n\eta_n$ is represented by the pointwise product $h\eta$, and
therefore
$h\eta\in\mathbf H_G^p([0,T]\times[0,L])$.

By continuity of the norm and
\eqref{eq:simple-H-norm},
\begin{align*}
\|h\eta\|_{\mathbf H^p}
&=
\lim_{n\to\infty}
\|h_n\eta_n\|_{\mathbf H^p}
\le
\lim_{n\to\infty}
\|h_n\|_{L^2}\|\eta_n\|_{\mathbf S^p}=
\|h\|_{L^2}\|\eta\|_{\mathbf S^p}.
\end{align*}
Raising both sides to the power $p$ gives
\eqref{eq:h-eta-Hp}.

Since $h\eta\in\mathbf H_G^p([0,T]\times[0,L])$, the stochastic integral in the left-hand side of \eqref{eq:weighted-stochastic-wave} is well defined.
By the  BDG inequality,
\begin{align*}
&
\hat{\mathbb E}
\left[
\left|
\int_0^T\int_0^L
h(r,z)\eta(r,z)\mathbf W(dr,dz)
\right|^p
\right]
\\
&\qquad\le
C_p\overline\sigma^p
\hat{\mathbb E}
\left[
\left(
\int_0^T\int_0^L
h(r,z)^2|\eta(r,z)|^2\,dzdr
\right)^{p/2}
\right]
\\
&\qquad\le
C_p\overline\sigma^p
\|h\|_{L^2}^p
\|\eta\|_{\mathbf S^p}^p.
\end{align*}
Taking the $p$th root proves
\eqref{eq:weighted-stochastic-wave}.

(ii) Let $h\in L^1([0,T]\times[0,L])$ and
$\eta\in\mathbf S_G^p([0,T]\times[0,L])$.
Choose deterministic simple functions $h_n$ and  simple random
fields $\eta_n$ such that
$\|h_n-h\|_{L^1}\to0,
\|\eta_n-\eta\|_{\mathbf S^p}\to0$ as $n\to \infty$.
After taking common refinements of the underlying partitions,
$h_n\eta_n\in\mathbf M^{1,0}([0,T]\times[0,L])$.

For $m,n\ge1$,
\begin{align*}
\|h_n\eta_n-h_m\eta_m\|_{\mathbf M^1}
&\le
\|h_n-h_m\|_{L^1}\|\eta_n\|_{\mathbf S^p}
+
\|h_m\|_{L^1}\|\eta_n-\eta_m\|_{\mathbf S^p}.
\end{align*}
Hence $\{h_n\eta_n\}$ is a Cauchy sequence in $\mathbf M_G^1([0,T]\times[0,L])$, and therefore
converges to some $\xi\in\mathbf M_G^1([0,T]\times[0,L])$.
The preceding estimates also show that this limit coincides with the
pointwise product $h\eta$. Thus
$h\eta\in\mathbf M_G^1([0,T]\times[0,L])$, and the deterministic
integral introduced in Section~2 is well defined in
$\mathbf L_G^1(\mathcal F_T)$.

Set
\[
Y_n
:=
\int_0^T\int_0^L
h_n(r,z)\eta_n(r,z)\,dzdr,\quad n\ge1,
\]
and 
\[
\|Y_n\|_{\mathbf L_G^p}
\le
\|h_n\|_{L^1}\|\eta_n\|_{\mathbf S^p}.
\]
Similarly,
\begin{align*}
\|Y_n-Y_m\|_{\mathbf L_G^p}
&\le
\|h_n-h_m\|_{L^1}\|\eta_n\|_{\mathbf S^p}
+
\|h_m\|_{L^1}
\|\eta_n-\eta_m\|_{\mathbf S^p}.
\end{align*}
Thus $\{Y_n\}$ is Cauchy in
$\mathbf L_G^p(\mathcal F_T)$, and hence converges to some
$Y\in\mathbf L_G^p(\mathcal F_T)$.

Since convergence in $\mathbf L_G^p$ implies convergence in
$\mathbf L_G^1$, while $h_n\eta_n\to h\eta$ in $\mathbf M_G^1$,
the continuity of the deterministic integral defined in Section~2
shows that
\[
Y=
\int_0^T\int_0^L h(r,z)\eta(r,z)\,dzdr.
\]
Consequently,
\[
\left\|
\int_0^T\int_0^L
h(r,z)\eta(r,z)\,dzdr
\right\|_{\mathbf L_G^p}
\le
\|h\|_{L^1}\|\eta\|_{\mathbf S^p},
\]
which proves \eqref{eq:weighted-deterministic-wave}.
\end{proof}

For $\eta\in\mathbf S_G^p([0,T]\times[0,L])$, $(t,x)\in[0,T]\times[0,L]$, define
\begin{align}
(\mathcal S\eta)(t,x)
&:=
\int_0^t\int_0^L
G_L(t-s;x,y)\eta(s,y)\mathbf W(ds,dy),
\label{eq:stochastic-convolution}\\
(\mathcal D\eta)(t,x)
&:=
\int_0^t\int_0^L
G_L(t-s;x,y)\eta(s,y)dyds.
\label{eq:deterministic-convolution}
\end{align}

\begin{proposition}
\label{prop:wave-convolution}
Let $p\ge2$. Then $\mathcal S$ and $\mathcal D$ map
$\mathbf S_G^p([0,T]\times[0,L])$ into itself. Moreover, for some
$C=C(p,T,L,\overline\sigma)>0$ and all
$(t,x),(s,y)\in[0,T]\times[0,L]$,
\begin{align}
\|(\mathcal S\eta)(t,x)-(\mathcal S\eta)(s,y)\|_{\mathbf L_G^p}
&\le
C\|\eta\|_{\mathbf S^p}
\left(|t-s|^{1/2}+|x-y|^{1/2}\right),
\label{eq:S-joint-inc}\\
\|(\mathcal D\eta)(t,x)-(\mathcal D\eta)(s,y)\|_{\mathbf L_G^p}
&\le
C\|\eta\|_{\mathbf S^p}
\left(|t-s|+|x-y|\right).
\label{eq:D-joint-inc}
\end{align}
\end{proposition}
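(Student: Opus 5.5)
The proof rests on one observation. Both operators are integrals of $\eta$ against the deterministic kernel $K_{t,x}$ from \eqref{eq:Ktx}:
\[
(\mathcal S\eta)(t,x)=\int_0^T\int_0^L K_{t,x}(r,z)\eta(r,z)\mathbf W(dr,dz),\qquad (\mathcal D\eta)(t,x)=\int_0^T\int_0^L K_{t,x}(r,z)\eta(r,z)\,dzdr,
\]
and both are linear in the kernel. So the increments in \eqref{eq:S-joint-inc} and \eqref{eq:D-joint-inc} are integrals of $\eta$ against $h:=K_{t,x}-K_{s,y}$.

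\textbf{Step 1: the increment bounds.} Apply Lemma~\ref{lem:weighted-wave}(i) with $h=K_{t,x}-K_{s,y}$. Together with \eqref{eq:K-L2-inc} this gives
\[
\|(\mathcal S\eta)(t,x)-(\mathcal S\eta)(s,y)\|_{\mathbf L_G^p}\le C_p^{1/p}\overline\sigma\, C^{1/2}\|\eta\|_{\mathbf S^p}\,(|t-s|+|x-y|)^{1/2},
\]
and $(a+b)^{1/2}\le a^{1/2}+b^{1/2}$ yields \eqref{eq:S-joint-inc}. Likewise, Lemma~\ref{lem:weighted-wave}(ii) with \eqref{eq:K-L1-inc} gives \eqref{eq:D-joint-inc}.

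\textbf{Step 2: uniform bounds.} Take $h=K_{t,x}$. By \eqref{eq:GL-uniform-L2}, $\|K_{t,x}\|_{L^2}^2\le TL/3$, and by Cauchy--Schwarz on $[0,T]\times[0,L]$, $\|K_{t,x}\|_{L^1}\le (TL)^{1/2}\|K_{t,x}\|_{L^2}$. Lemma~\ref{lem:weighted-wave} then gives $\sup_{t,x}\|(\mathcal S\eta)(t,x)\|_{\mathbf L_G^p}$ and $\sup_{t,x}\|(\mathcal D\eta)(t,x)\|_{\mathbf L_G^p}$ bounded by $C\|\eta\|_{\mathbf S^p}$. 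Adaptedness also holds: $(\mathcal S\eta)(t,x)$ and $(\mathcal D\eta)(t,x)$ lie in $\mathbf L_G^p(\mathcal F_t)$, because $K_{t,x}$ vanishes for $r\ge t$. Hence the integrals can be computed on $[0,t]$, where the approximating simple fields only use information up to time $t$.

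\textbf{Step 3: membership in $\mathbf S_G^p$ (the main obstacle).} A finite $\mathbf S^p$ norm is not the same as lying in $\mathbf S_G^p$, which is the completion of simple fields. I would build explicit approximations from Step 1. Take uniform partitions of $[0,T]$ and $[0,L]$ with mesh $2^{-n}$ and set
\[
\xi_n(t,x):=\sum_{i,j}(\mathcal S\eta)(t_i,x_j)\mathbf 1_{[t_i,t_{i+1})}(t)\mathbf 1_{[x_j,x_{j+1})}(x).
\]
This is a simple field with $(\mathcal S\eta)(t_i,x_j)\in\mathbf L_G^p(\mathcal F_{t_i})$. By \eqref{eq:S-joint-inc},
\[
\|\xi_n-\mathcal S\eta\|_{\mathbf S^p}\le 2C\|\eta\|_{\mathbf S^p}2^{-n/2}\to0,
\]
so $\mathcal S\eta\in\mathbf S_G^p$. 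The points $t=T$ and $x=L$ are covered by the stated convention, and the joint continuity in $\mathbf L_G^p$ handles the endpoints. The same argument applies to $\mathcal D\eta$ using \eqref{eq:D-joint-inc}. The delicate point is that the approximating coefficients must be $\mathcal F_{t_i}$-measurable elements of $\mathbf L_G^p$, which is exactly what Step 2's adaptedness provides. This is why the left endpoint $t_i$ is used in $\xi_n$.
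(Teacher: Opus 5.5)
Your proposal is correct and follows the paper's proof essentially step by step. Like the paper, you get the increments from Lemma~\ref{lem:weighted-wave} with $h=K_{t,x}-K_{s,y}$ combined with \eqref{eq:K-L2-inc} and \eqref{eq:K-L1-inc}, adaptedness from the support of $K_{t,x}$ in $[0,t)$, and membership in $\mathbf S_G^p$ from grid approximations with left-endpoint coefficients $(\mathcal S\eta)(t_i,x_j)\in\mathbf L_G^p(\mathcal F_{t_i})$; the only differences are cosmetic (your dyadic mesh versus the paper's $T/n$ and $L/n$, and your explicit uniform bound, which the paper leaves implicit in its well-definedness step).
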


\begin{proof}
For $(t,x)\in[0,T]\times[0,L]$, recall that
\[
K_{t,x}(r,z)
=
\mathbf 1_{[0,t)}(r)G_L(t-r;x,z),
\qquad
(r,z)\in[0,T]\times[0,L].
\]

We first verify that the two convolutions are well defined.
By \eqref{eq:GL-uniform-L2},
\[
\|K_{t,x}\|_{L^2([0,T]\times[0,L])}^2
=
\int_0^t\int_0^L
|G_L(t-r;x,z)|^2\,dzdr
\le
\frac{LT}{3}.
\]
Hence $K_{t,x}\in L^2([0,T]\times[0,L])$. By
Lemma~\ref{lem:weighted-wave}(i),
$K_{t,x}\eta\in\mathbf H_G^p([0,T]\times[0,L])$.
Since $K_{t,x}$ is supported on $[0,t)\times[0,L]$, the stochastic
integral may equivalently be constructed on the time horizon $[0,t]$;
therefore
$(\mathcal S\eta)(t,x)\in\mathbf L_G^p(\mathcal F_t)$.


Similarly, since the  space-time cylinder has finite measure,
\[
\|K_{t,x}\|_{L^1}
\le
(TL)^{1/2}\|K_{t,x}\|_{L^2}<\infty.
\]
As $K_{t,x}$ is supported on $[0,t)\times[0,L]$, the same
construction in Lemma~\ref{lem:weighted-wave}(ii) may be carried out on
the time horizon $[0,t]$. Consequently,
$(\mathcal D\eta)(t,x)\in\mathbf L_G^p(\mathcal F_t)$.

We next prove the increment estimates. By the linearity of the stochastic
integral,
\begin{align*}
(\mathcal S\eta)(t,x)-(\mathcal S\eta)(s,y)
&=
\int_0^T\int_0^L
\bigl(K_{t,x}(r,z)-K_{s,y}(r,z)\bigr)
\eta(r,z)\mathbf W(dr,dz).
\end{align*}
Applying \eqref{eq:weighted-stochastic-wave} with
$h=K_{t,x}-K_{s,y}$ yields
\begin{align*}
\|(\mathcal S\eta)(t,x)-(\mathcal S\eta)(s,y)\|_{\mathbf L_G^p}
&\le
C_p^{1/p}\overline\sigma
\|K_{t,x}-K_{s,y}\|_{L^2}
\|\eta\|_{\mathbf S^p}.
\end{align*}
By \eqref{eq:K-L2-inc},
\[
\|K_{t,x}-K_{s,y}\|_{L^2}
\le
C
\bigl(|t-s|+|x-y|\bigr)^{1/2}
\le
C\left(
|t-s|^{1/2}+|x-y|^{1/2}
\right).
\]
Consequently,
\[
\|(\mathcal S\eta)(t,x)-(\mathcal S\eta)(s,y)\|_{\mathbf L_G^p}
\le
C\|\eta\|_{\mathbf S^p}
\left(
|t-s|^{1/2}+|x-y|^{1/2}
\right),
\]
which proves \eqref{eq:S-joint-inc}.

Similarly, by the linearity of the Bochner integral,
\[
(\mathcal D\eta)(t,x)-(\mathcal D\eta)(s,y)
=
\int_0^T\int_0^L
(K_{t,x}-K_{s,y})(r,z)\eta(r,z)\,dzdr.
\]
Hence, by \eqref{eq:weighted-deterministic-wave} and \eqref{eq:K-L1-inc},
\begin{align*}
\|(\mathcal D\eta)(t,x)-(\mathcal D\eta)(s,y)\|_{\mathbf L_G^p}
&\le
\|K_{t,x}-K_{s,y}\|_{L^1}
\|\eta\|_{\mathbf S^p}
\\
&\le
C\|\eta\|_{\mathbf S^p}
\bigl(|t-s|+|x-y|\bigr),
\end{align*}
which proves \eqref{eq:D-joint-inc}.

It remains to prove that the two random fields $\mathcal S\eta$ and $\mathcal D\eta$ belong to
$\mathbf S_G^p([0,T]\times[0,L])$.
For $n\ge1$, let
$t_i^n=\frac{iT}{n}$,
 and
$x_j^n=\frac{jL}{n}$,
$0\le i,j\le n$,
and let $I_i^n=[t_i^n,t_{i+1}^n)$ be the time partition intervals,
with the value at $t=T$ understood according to the $\mathbf S^p$ terminal-value
convention introduced in Section~2. Let $J_j^n$ be the corresponding
spatial partition intervals, with the last spatial interval closed at
$x=L$. Define
\[
S_n(t,x)
=
\sum_{i=0}^{n-1}\sum_{j=0}^{n-1}
(\mathcal S\eta)(t_i^n,x_j^n)
\mathbf1_{I_i^n}(t)\mathbf1_{J_j^n}(x).
\]
Since
$
(\mathcal S\eta)(t_i^n,x_j^n)
\in\mathbf L_G^p(\mathcal F_{t_i^n}),
$
$\{S_n\}_{n=1}^\infty\subset \mathbf{M}^{p,0}([0,T]\times[0,L])$ are simple random fields.

If $(t,x)\in I_i^n\times J_j^n$, then
\[
|t-t_i^n|\le\frac{T}{n},
\qquad
|x-x_j^n|\le\frac{L}{n}.
\]
Therefore, by \eqref{eq:S-joint-inc},
\begin{align*}
\|(\mathcal S\eta)(t,x)-S_n(t,x)\|_{\mathbf L_G^p}
&\le
C\|\eta\|_{\mathbf S^p}
\left[
\left(\frac{T}{n}\right)^{1/2}
+
\left(\frac{L}{n}\right)^{1/2}
\right].
\end{align*}
Taking the supremum over $(t,x)\in[0,T]\times[0,L]$ gives
\[
\sup_{(t,x)\in[0,T]\times[0,L]}
\|(\mathcal S\eta)(t,x)-S_n(t,x)\|_{\mathbf L_G^p}
\le
C\|\eta\|_{\mathbf S^p}
(T^{1/2}+L^{1/2})n^{-1/2}
\longrightarrow0, \quad\text{as } n\to\infty.
\]
Since $S_n\in\mathbf M^{p,0}([0,T]\times[0,L])$, it follows from the
definition of $\mathbf S_G^p$ as the completion of the simple fields
under the supremum norm that
$\mathcal S\eta\in\mathbf S_G^p([0,T]\times[0,L])$.

Likewise, define
\[
D_n(t,x)
=
\sum_{i=0}^{n-1}\sum_{j=0}^{n-1}
(\mathcal D\eta)(t_i^n,x_j^n)
\mathbf1_{I_i^n}(t)\mathbf1_{J_j^n}(x).
\]
Since
$(\mathcal D\eta)(t_i^n,x_j^n)
\in\mathbf L_G^p(\mathcal F_{t_i^n})$,
$D_n$ is  simple random field belonging to $\mathbf{M}^{p,0}([0,T]\times[0,L])$. By \eqref{eq:D-joint-inc},
\[
\sup_{(t,x)\in[0,T]\times[0,L]}
\|(\mathcal D\eta)(t,x)-D_n(t,x)\|_{\mathbf L_G^p}
\le
C\|\eta\|_{\mathbf S^p}
(T+L)n^{-1}
\longrightarrow0 \quad\text{as } n\to\infty.
\]
Therefore
$
\mathcal D\eta
\in
\mathbf S_G^p([0,T]\times[0,L]).
$
The proof is complete.
\end{proof}

\subsection{Semilinear $G$-stochastic wave equations}

Consider a semilinear stochastic wave equation driven by the multiplicative space-time $G$-white noise on a bounded interval:
\begin{equation}\label{eq:nswe}
\begin{cases}
  \frac{\partial^2}{\partial t^2}u(t,x)-\frac{\partial^2}{\partial x^2}u(t,x)=b(u(t,x))+a(u(t,x))\dot{\mathbf{W}}(t,x),\quad t>0,0<x<L,\\
  u(0,x)=u_0(x),\quad  0\le x\le L,\\
  \frac{\partial}{\partial t}u(0,x)=v_0(x),\quad  0<x<L,\\
  u(t,0)=u(t,L)=0,\quad t\ge 0,
\end{cases}
\end{equation}
where $\dot{\mathbf{W}}(t,x)$
is the generalized derivative of the space-time $G$-white noise $\mathbf{W}(t,x)$ introduced in \cite{JP25}. 

Assume that
\begin{equation}
\label{eq:wave-initial-assumption}
u_0\in C([0,L]),\qquad u_0(0)=u_0(L)=0,
\qquad v_0\in L^1(0,L),
\end{equation}
and suppose that $a,b:\mathbb R\to\mathbb R$ are globally Lipschitz functions, that is, there exists a constant $C$ such that
\begin{equation}\label{eq:Lipschitz-ab}
|a(x)-a(y)|+|b(x)-b(y)|\leq C|x-y|,\ \forall x,y\in\mathbb{R}.
\end{equation} 
If $v_0$ is only in
$L^1(0,L)$, the initial velocity condition is understood in the usual weak
(distributional) sense.

Let $u_0^{\mathrm{op}}$ and $v_0^{\mathrm{op}}$ denote the odd
$2L$-periodic extensions of $u_0$ and $v_0$ to $\mathbb R$, respectively.
Define
\begin{equation}
\label{eq:I0}
I_0(t,x)
:=
\frac12\bigl(
u_0^{\mathrm{op}}(x+t)+u_0^{\mathrm{op}}(x-t)
\bigr)
+
\frac12\int_{x-t}^{x+t}v_0^{\mathrm{op}}(r)\,dr,
\qquad t\ge0,\ x\in[0,L].
\end{equation}
Then $I_0$ is bounded and continuous on every compact cylinder
$[0,T]\times[0,L]$, satisfies
\[
I_0(t,0)=I_0(t,L)=0,
\qquad t\ge0,
\]
and is the distributional solution of the homogeneous Dirichlet wave
equation with initial conditions $u_0$ and  $v_0$ (see \cite[Sec.~3.4.1]{DS}). 

By Duhamel's principle, the mild solution of $G$-stochastic wave equation \eqref{eq:nswe} is defined as follows.

\begin{definition}\label{def:nswe}
A spatio-temporal random field $u=\{u(t, x) : (t, x)\in [0,\infty) \times [0,L]\}$ is
said to be a mild solution of the semilinear $G$-stochastic wave equation \eqref{eq:nswe} if it satisfies the
following conditions:
\begin{itemize}
  \item[(i)] For every $T>0$, $(u(t,x))_{(t, x)\in [0,T] \times [0,L]}\in \mathbf{S}_G^2
([0, T] \times [0,L])$;
  \item[(ii)] For every $t\geq0$ and $x\in[0,L]$,
  \begin{align}\label{eq:nswe-df}
  u(t,x)=&I_0(t,x)
  +\int_0^t\int_0^LG_L(t-s,x,y)b(u(s,y))dyds+\int_0^t\int_0^LG_L(t-s,x,y)a(u(s,y)){\mathbf{W}}(ds,dy).
  \end{align}
  Equivalently,
\[
u=I_0+\mathcal D(b\circ u)+\mathcal S(a\circ u)
\]
on every finite time horizon.
\end{itemize}
\end{definition}

\begin{theorem}
\label{th:main}
Assume \eqref{eq:wave-initial-assumption} and \eqref{eq:Lipschitz-ab}. Then the semilinear $G$-stochastic wave equation
\eqref{eq:nswe} admits a unique global mild solution on
$[0,\infty)\times[0,L]$. Moreover, for every $T>0$ and $p\ge2$,
\begin{equation}
\label{eq:solution-Sp}
u|_{[0,T]\times[0,L]}
\in\mathbf S_G^p([0,T]\times[0,L]).
\end{equation}
\end{theorem}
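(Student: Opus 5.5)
The proof will be a Picard iteration in $\mathbf S_G^p([0,T]\times[0,L])$ on an arbitrary finite horizon $T$, followed by a consistency argument across horizons. Fix $T>0$ and $p\ge2$. First I check that $I_0$, which is bounded and continuous and deterministic, belongs to $\mathbf S_G^p([0,T]\times[0,L])$. I approximate it by the step fields $\sum_{i,j}I_0(t_i^n,x_j^n)\mathbf 1_{I_i^n}(t)\mathbf 1_{J_j^n}(x)$. Uniform continuity of $I_0$ on the compact cylinder gives convergence in the $\mathbf S^p$ norm. Next, if $v\in\mathbf S_G^p$, then $a\circ v$ and $b\circ v$ also lie in $\mathbf S_G^p$. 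To see this, let $v_n$ be simple fields converging to $v$. Then $a\circ v_n$ is still simple, because $a(X_{ij})\in\mathbf L_G^p(\mathcal F_{t_i})$ when $a$ is Lipschitz, and $\|a\circ v_n-a\circ v\|_{\mathbf S^p}\le C\|v_n-v\|_{\mathbf S^p}$. By Proposition~\ref{prop:wave-convolution}, the map
\[
\Phi(v):=I_0+\mathcal D(b\circ v)+\mathcal S(a\circ v)
\]
therefore sends $\mathbf S_G^p([0,T]\times[0,L])$ into itself.

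For the contraction estimate I will use a time-weighted norm rather than the global bounds. For $v,w\in\mathbf S_G^p$, set $M(s):=\sup_{y}\|v(s,y)-w(s,y)\|^p_{\mathbf L_G^p}$. Take $h=K_{t,x}$, restricted in time and space, in Lemma~\ref{lem:weighted-wave}(i), but keep the H\"older step at the pointwise level before bounding $\hat{\mathbb E}|\eta|^p$ by its supremum. Together with the BDG inequality (Corollary~\ref{cor:BDG-fixed}) and \eqref{eq:GL-uniform-L2}, this gives
\[
\hat{\mathbb E}\bigl[|\mathcal S(a\circ v-a\circ w)(t,x)|^p\bigr]\le C_p\overline\sigma^p\Bigl(\tfrac{LT}{3}\Bigr)^{p/2-1}\mathrm{Lip}(a)^p\,\tfrac{L}{3}\int_0^t M(s)\,ds .
\]
An analogous bound holds for $\mathcal D$, using $|G_L|\le 1/2$ from \eqref{eq:GL-image} and H\"older's inequality. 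Hence
\[
\sup_x\|\Phi(v)(t,x)-\Phi(w)(t,x)\|^p_{\mathbf L_G^p}\le C\int_0^t M(s)\,ds .
\]
Iterating this inequality yields $\|\Phi^n(v)-\Phi^n(w)\|^p_{\mathbf S^p}\le (CT)^n/n!\,\|v-w\|^p_{\mathbf S^p}$. So some iterate $\Phi^n$ is a strict contraction on the Banach space $\mathbf S_G^p$, and $\Phi$ has a unique fixed point $u^{(p)}$ there. Equivalently, one can run the Picard scheme $u_{n+1}=\Phi(u_n)$ with $u_0=I_0$ and sum the factorial bounds.

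The main obstacle is justifying this pointwise-in-time version of Lemma~\ref{lem:weighted-wave}. As stated, the lemma uses only $\|\eta\|_{\mathbf S^p}$ over the whole horizon, whereas I need the bound with $\int_0^t M(s)\,ds$. The simple-field computation in its proof already gives the integrated form, because H\"older's inequality is applied before taking $\hat{\mathbb E}$ inside the $dz\,dr$ integral, using sub-additivity. I plan to redo that computation and pass to limits along $\mathbf S^p$-approximations. An alternative route avoids the refinement altogether: show that $\Phi$ is a contraction on short intervals $[0,\delta]$, where the constant $C_p\overline\sigma^p(L\delta/3)^{p/2}\mathrm{Lip}(a)^p+\cdots$ is less than $1$, and then patch intervals. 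Patching is awkward for the wave equation because the solution also depends on earlier times, so the weighted-integral route is preferable. I also expect the $\int_0^t$ variant to survive the completion in the way just described.

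Uniqueness and global existence then follow from consistency. Any mild solution in $\mathbf S_G^2$ is unique by the same argument with $p=2$, using Lemma~\ref{Ito ineq}. Since $\mathbf S_G^p\subset\mathbf S_G^2$, the fixed points $u^{(p)}$ all coincide with the $\mathbf S_G^2$ solution; this proves \eqref{eq:solution-Sp}. Finally, for $T<T'$ the restriction to $[0,T]$ of the solution on $[0,T']$ satisfies \eqref{eq:nswe-df} on $[0,T]$, because $K_{t,x}$ is supported in $[0,t)$. Uniqueness on $[0,T]$ then shows the solutions agree there, and taking $T=1,2,3,\dots$ defines the global mild solution on $[0,\infty)\times[0,L]$.
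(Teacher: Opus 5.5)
Your proposal is correct and follows essentially the same route as the paper: Picard iteration (equivalently, a contraction for some iterate of $\Phi$) in $\mathbf S_G^p([0,T]\times[0,L])$. It rests on the integral inequality $\sup_x\hat{\mathbb E}\bigl[|\Phi(v)(t,x)-\Phi(w)(t,x)|^p\bigr]\le C\int_0^t M(s)\,ds$ with factorial bounds, uses uniqueness at $p=2$ to identify all the $\mathbf S_G^p$ limits, and finishes with consistency across horizons. The pointwise-in-time refinement of Lemma~\ref{lem:weighted-wave} that you flag is exactly what the paper uses implicitly when it invokes H\"older's inequality, \eqref{eq:BDG-fixed} and \eqref{eq:GL-uniform-L2}, and your justification (H\"older before $\hat{\mathbb E}$, sub-additivity on simple fields, then passage to the limit) is sound.
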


\begin{proof}
Fix $T>0$ and $p\ge2$. Since $I_0$ is deterministic, bounded, and continuous
on $[0,T]\times[0,L]$, it 
belongs to $\mathbf S_G^p([0,T]\times[0,L])$. Define the Picard sequence on
$[0,T]\times[0,L]$ by
\[
u^{(0)}:=I_0,
\qquad
u^{(n+1)}
:=I_0+\mathcal D(b\circ u^{(n)})+\mathcal S(a\circ u^{(n)}),
\qquad n\ge0.
\]
Global Lipschitz continuity implies that $a\circ v,b\circ v\in\mathbf S_G^p([0,T]\times[0,L])$
whenever $v\in\mathbf S_G^p([0,T]\times[0,L])$. Hence Proposition~\ref{prop:wave-convolution}
shows inductively that the Picard iteration sequence is well defined in
$\mathbf S_G^p([0,T]\times[0,L])$.

For $n\ge1$, set
\[
D_n(t):=
\sup_{0\le x\le L}
\hat{\mathbb E}
\left[|u^{(n)}(t,x)-u^{(n-1)}(t,x)|^p\right].
\]
Using the Lipschitz property of $a,b$, H\"older's inequality, the
fixed-time BDG estimate \eqref{eq:BDG-fixed}, and
\eqref{eq:GL-uniform-L2}, we obtain, for $0\le t\le T$,
\[
D_{n+1}(t)
\le C\int_0^tD_n(s)\,ds,
\]
where $C=C(p,T,L,a,b,\overline\sigma)$ is independent of $n$. Moreover,
$D_1$ is bounded on $[0,T]$, because $I_0$ is bounded and $a,b$ have
linear growth. Let
$M:=\sup_{0\le t\le T}D_1(t)<\infty$. Iterating the preceding inequality
gives
\[
D_n(t)
\le
M\frac{(Ct)^{n-1}}{(n-1)!},
\qquad n\ge1.
\]
Consequently,
\[
\sum_{n=1}^\infty
\|u^{(n)}-u^{(n-1)}\|_{\mathbf S^p}
\le
M^{1/p}
\sum_{n=1}^\infty
\frac{(CT)^{(n-1)/p}}{[(n-1)!]^{1/p}}
<\infty.
\]
Thus $\{u^{(n)}\}$ is Cauchy in the complete space
$\mathbf S_G^p([0,T]\times[0,L])$. Let its limit be $u^{[p]}$.
The Lipschitz continuity of $a,b$ and Proposition~\ref{prop:wave-convolution}
allow us to pass to the limit in the Picard equation, so $u^{[p]}$
satisfies \eqref{eq:nswe-df}.

Taking $p=2$ gives a mild solution $u$. If $u$ and $v$ are two mild
solutions in $\mathbf S_G^2([0,T]\times[0,L])$, the same estimates give
\[
D(t):=
\sup_{0\le x\le L}\hat{\mathbb E}[|u(t,x)-v(t,x)|^2]
\le C\int_0^tD(s)\,ds.
\]
Gronwall's lemma yields $D\equiv0$, proving uniqueness. For $p>2$,
convergence in $\mathbf S_G^p$ implies convergence in $\mathbf S_G^2$;
therefore uniqueness identifies $u^{[p]}$ with the same solution $u$.
This proves \eqref{eq:solution-Sp}.

Finally, $T>0$ was arbitrary. If $0<T_1<T_2$, uniqueness implies that the
solution constructed on $[0,T_2]\times[0,L]$, restricted to
$[0,T_1]\times[0,L]$, coincides with the solution constructed on the
smaller cylinder. These consistent finite-horizon solutions define a
unique global mild solution on $[0,\infty)\times[0,L]$.
\end{proof}

Furthermore, we show that the mild solution \eqref{eq:nswe-df}  is also a weak solution to \eqref{eq:nswe}.

For $T>0$, set
\begin{equation}
\label{eq:test-class}
\mathcal T_T
:=
\left\{
\varphi\in C^\infty([0,T]\times[0,L]):
\begin{array}{l}
\varphi(T,\cdot)=\partial_t\varphi(T,\cdot)=0,
\varphi(\cdot,0)=\varphi(\cdot,L)=0
\end{array}
\right\}.
\end{equation}

\begin{definition}
\label{def:weak-wave}
A spatio-temporal random field
$u=\{u(t,x):(t,x)\in[0,\infty)\times[0,L]\}$ is called a weak
solution of the semilinear $G$-stochastic wave equation \eqref{eq:nswe}  if it satisfies the
following conditions:
\begin{itemize}
  \item[(i)] For every $T>0$,
$u|_{[0,T]\times[0,L]}\in\mathbf S_G^2([0,T]\times[0,L])$;
\item[(ii)] For each fixed
$t\ge0$,
$u(t,0)=u(t,L)=0$;
\item[(iii)] For every $T>0$ and every
$\varphi\in\mathcal T_T$,
\begin{align}
&\int_0^T\int_0^L
u(t,x)(\partial_{tt}\varphi-\partial_{xx}\varphi)(t,x)dxdt
+
\int_0^L u_0(x)\partial_t\varphi(0,x)dx
-
\int_0^L v_0(x)\varphi(0,x)dx
\notag\\
&\qquad=
\int_0^T\int_0^L b(u(t,x))\varphi(t,x)dxdt
+
\int_0^T\int_0^L a(u(t,x))\varphi(t,x)\mathbf W(dt,dx).
\label{eq:semilinear-weak}
\end{align}
\end{itemize}
\end{definition}

\begin{theorem}
\label{thm:semilinear-mild-weak}
Under the assumptions of Theorem~\ref{th:main}, the unique global mild
solution of \eqref{eq:nswe} is also a weak solution.
\end{theorem}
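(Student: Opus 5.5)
We plan to verify the three conditions of Definition~\ref{def:weak-wave} for the mild solution $u=I_0+\mathcal D(b\circ u)+\mathcal S(a\circ u)$ given by Theorem~\ref{th:main}.

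Condition (i) is part of Theorem~\ref{th:main}. For (ii), fix $t$ and note that $I_0(t,0)=I_0(t,L)=0$. Also $G_L(r;0,z)=G_L(r;L,z)=0$ for all $r,z$, by the sine series \eqref{eq:GL-series}. Hence $K_{t,0}=K_{t,L}=0$, so both convolution terms vanish at $x=0,L$: the stochastic one by linearity of the integral (its $\mathbf L_G^2$ norm is $0$ by \eqref{eq:weighted-stochastic-wave}), the deterministic one trivially. Thus $u(t,0)=u(t,L)=0$ q.s.

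For (iii), fix $T>0$ and $\varphi\in\mathcal T_T$, and write $\psi:=\partial_{tt}\varphi-\partial_{xx}\varphi$, which is smooth and bounded. We substitute the mild formula into $\int_0^T\int_0^L u\,\psi\,dxdt$ and treat the three pieces separately.

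\emph{Initial-data piece.} The deterministic identity
\[
\int_0^T\int_0^L I_0\psi\,dxdt+\int_0^L u_0\,\partial_t\varphi(0,\cdot)\,dx-\int_0^L v_0\,\varphi(0,\cdot)\,dx=0
\]
is exactly the statement that $I_0$ is the distributional solution of the homogeneous Dirichlet wave equation, which we quote from \cite[Sec.~3.4.1]{DS}.

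\emph{Key kernel identity.} For $s<T$ and $y\in(0,L)$ set
\[
\Phi(s,y):=\int_s^T\int_0^L G_L(t-s;x,y)\,\psi(t,x)\,dxdt .
\]
We will show $\Phi(s,y)=\varphi(s,y)$. The function $w(t,x):=\varphi(T-t,x)$ satisfies the wave equation with forcing $\tilde\psi(t,x)=\psi(T-t,x)$, zero initial data and Dirichlet boundary values. Duhamel's formula together with the symmetry $G_L(r;x,y)=G_L(r;y,x)$ then gives the claim. Alternatively, one can expand in the sine basis and integrate by parts twice in $t$ for each mode, using $\varphi(T)=\partial_t\varphi(T)=0$. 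This identity should be checked carefully, but it is classical.

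\emph{Drift piece.} Since $b\circ u\in\mathbf S_G^p$ and $\psi$ is bounded, a deterministic Fubini argument applies. We prove it first for simple $b\circ u$, where it is elementary, and pass to the limit using Lemma~\ref{lem:weighted-wave}(ii) with $h=K_{t,x}\psi$ in $L^1$. Combined with the kernel identity, this gives $\int_0^T\int_0^L\mathcal D(b\circ u)\psi\,dxdt=\int_0^T\int_0^L b(u)\varphi\,dyds$.

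\emph{Noise piece (main obstacle).} We need to exchange $dxdt$ with $\mathbf W(ds,dy)$ in $\int_0^T\int_0^L\mathcal S(a\circ u)(t,x)\psi(t,x)\,dxdt$. We would invoke the stochastic Fubini theorem of \cite{JP25}. Its hypotheses concern a deterministic kernel $(t,x,s,y)\mapsto\mathbf 1_{s<t}G_L(t-s;x,y)\psi(t,x)$ that is square integrable, together with an integrand in $\mathbf S_G^2$. Here the kernel is bounded by $\tfrac12\|\psi\|_\infty$, and $a\circ u\in\mathbf S_G^2$, so these hypotheses should hold. If that theorem is only stated for the heat kernel, we would reprove it in this setting:
\begin{itemize}
\item[(a)] For simple $\eta=a\circ u$ the exchange is finite-sum linearity plus Lemma~\ref{lem:weighted-wave}.
\item[(b)] For general $\eta$, approximate by simple $\eta_n\to\eta$ in $\mathbf S^2$. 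The left side converges in $\mathbf L_G^2$ because $\|\mathcal S(\eta_n-\eta)(t,x)\|_{\mathbf L_G^2}\le C\|\eta_n-\eta\|_{\mathbf S^2}$ uniformly in $(t,x)$ (Proposition~\ref{prop:wave-convolution}), and the Bochner integral is continuous.
\item[(c)] The right side $\int\int\Phi\,\eta_n\,\mathbf W(ds,dy)$ converges by \eqref{eq:weighted-stochastic-wave} with $h=\Phi\in L^2$.
\end{itemize}
The kernel identity then turns the result into $\int_0^T\int_0^L a(u)\varphi\,\mathbf W(ds,dy)$.

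Summing the three pieces yields \eqref{eq:semilinear-weak}.
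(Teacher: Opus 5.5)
Your proposal is correct and follows the paper's proof essentially step for step: the same three-way split of the mild formula, the distributional identity for $I_0$, the Green-adjoint identity $\int_s^T\int_0^L G_L(t-s;x,y)(\partial_{tt}\varphi-\partial_{xx}\varphi)(t,x)\,dxdt=\varphi(s,y)$, deterministic Fubini for the drift, the stochastic Fubini theorem of \cite{JP25} for the noise term, and the vanishing of $G_L$ at $x=0,L$ for the boundary condition. Your additions are a justification of the adjoint identity (by time reversal plus Duhamel, or by mode-by-mode integration by parts), which the paper only asserts, and a fallback approximation proof of the stochastic Fubini step; in its simple-integrand case you implicitly use that the bounded map $h\mapsto\int h\,d\mathbf W$ commutes with the $dx\,dt$ Bochner integral, which holds because $(t,x)\mapsto K_{t,x}$ is $L^2$-continuous by \eqref{eq:K-L2-inc}.
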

\begin{proof}
Fix $T>0$ and $\varphi\in\mathcal T_T$, and write
$q(t,x):=
(\partial_{tt}\varphi-\partial_{xx}\varphi)(t,x)$.
Since $u\in\mathbf S_G^2([0,T]\times[0,L])$ and $a,b$ are globally
Lipschitz, the definition of
$\mathbf S_G^2([0,T]\times[0,L])$ by completion of  simple random fields implies  that
$a\circ u,\ b\circ u
\in\mathbf S_G^2([0,T]\times[0,L]).$
Moreover, $\varphi$ and $q$ are bounded on the compact cylinder.
Hence all the deterministic and stochastic integrals  below are well defined in $\mathbf L_G^2(\mathcal F_T)$ by Lemma~\ref{lem:weighted-wave} and
Proposition~\ref{prop:wave-convolution}.

By the mild formulation,
\begin{equation}
\label{eq:mild-for-weak-proof}
u(t,x)
=
I_0(t,x)
+
(\mathcal D(b\circ u))(t,x)
+
(\mathcal S(a\circ u))(t,x).
\end{equation}
We treat the three terms on the right-hand side separately.

Since $I_0$ is the distributional solution of the homogeneous Dirichlet
wave equation with initial condition $(u_0,v_0)$,
\begin{equation}
\label{eq:I0-weak}
\int_0^T\int_0^L
I_0(t,x)q(t,x)\,dxdt
+
\int_0^L u_0(x)\partial_t\varphi(0,x)\,dx
-
\int_0^L v_0(x)\varphi(0,x)\,dx
=0.
\end{equation}

For the drift term, since $b\circ u\in\mathbf S_G^2([0,T]\times[0,L])$, the boundedness of
$q$ and the wave-kernel estimates imply that
\[
\mathbf1_{\{s<t\}}
G_L(t-s;x,y)b(u(s,y))q(t,x)
\]
is Bochner integrable as an
$\mathbf L_G^2(\mathcal F_T)$-valued function on
$[0,T]^2\times[0,L]^2$.
Hence, the Fubini theorem for Bochner integrals yields
\begin{align*}
&\int_0^T\int_0^L
(\mathcal D(b\circ u))(t,x)q(t,x)\,dxdt
\\
&=
\int_0^T\int_0^L
\int_0^t\int_0^L
G_L(t-s;x,y)b(u(s,y))q(t,x)
\,dyds\,dxdt
\\
&=
\int_0^T\int_0^L
b(u(s,y))
\left[
\int_s^T\int_0^L
G_L(t-s;x,y)q(t,x)\,dxdt
\right]dyds.
\end{align*}
Since
\begin{equation}\label{eq:Green-adjoint}
\int_s^T\int_0^L
G_L(t-s;x,y)q(t,x)\,dxdt
=
\varphi(s,y),
\end{equation}
we have
\begin{equation}
\label{eq:drift-fubini}
\int_0^T\int_0^L
(\mathcal D(b\circ u))(t,x)q(t,x)\,dxdt
=
\int_0^T\int_0^L
b(u(s,y))\varphi(s,y)\,dyds.
\end{equation}

For the stochastic convolution, consider the four-variable integrand
\[
\mathbf1_{\{s<t\}}
G_L(t-s;x,y)a(u(s,y))q(t,x).
\]
As $a\circ u\in\mathbf S_G^2([0,T]\times[0,L])$ and
$\mathbf1_{\{s<t\}}G_L(t-s;x,y)q(t,x)
\in
L^2([0,T]^2\times[0,L]^2)$,
approximating $a\circ u$ by  simple random fields and the deterministic
kernel by rectangular simple functions shows that this integrand belongs
to the four-variable completion required by the stochastic Fubini theorem
of \cite[Theorem~4.1]{JP25}. 
Then 
the stochastic Fubini theorem
 gives
\begin{align*}
&\int_0^T\int_0^L
(\mathcal S(a\circ u))(t,x)q(t,x)\,dxdt
\\
&\quad=
\int_0^T\int_0^L
a(u(s,y))
\left[
\int_s^T\int_0^L
G_L(t-s;x,y)q(t,x)\,dxdt
\right]\mathbf W(ds,dy).
\end{align*}
Applying \eqref{eq:Green-adjoint}, we obtain
\begin{equation}
\label{eq:stochastic-fubini-wave}
\int_0^T\int_0^L
(\mathcal S(a\circ u))(t,x)q(t,x)\,dxdt
=
\int_0^T\int_0^L
a(u(s,y))\varphi(s,y)\mathbf W(ds,dy).
\end{equation}

Multiplying \eqref{eq:mild-for-weak-proof} by $q(t,x)$, integrating over
$[0,T]\times[0,L]$, and combining
\eqref{eq:I0-weak}, \eqref{eq:drift-fubini}, and
\eqref{eq:stochastic-fubini-wave}, we obtain
\begin{align*}
&\int_0^T\int_0^L
u(t,x)
(\partial_{tt}\varphi-\partial_{xx}\varphi)(t,x)\,dxdt
+
\int_0^L u_0(x)\partial_t\varphi(0,x)\,dx
-
\int_0^L v_0(x)\varphi(0,x)\,dx
\\
&\qquad=
\int_0^T\int_0^L
b(u(t,x))\varphi(t,x)\,dxdt
+
\int_0^T\int_0^L
a(u(t,x))\varphi(t,x)\mathbf W(dt,dx),
\end{align*}
in $\mathbf L_G^2(\mathcal F_T)$.

Finally, by construction,
\[
I_0(t,0)=I_0(t,L)=0,
\]
and the Dirichlet Green kernel satisfies
\[
G_L(r;0,y)=G_L(r;L,y)=0,
\qquad r>0.
\]
Hence both convolution terms in \eqref{eq:mild-for-weak-proof} vanish at
$x=0$ and $x=L$. Consequently, for each fixed $t\ge0$,
$u(t,0)=u(t,L)=0$.
Thus $u$ is a weak solution of \eqref{eq:nswe} in the sense of
Definition~\ref{def:weak-wave}.
\end{proof}




\subsection{Moment estimates and quasi-sure H\"older regularity}

For $\gamma\in(0,1]$, write
\[
C_0^\gamma([0,L])
:=
\left\{
f\in C([0,L]):
f(0)=f(L)=0,\ 
\sup_{x\ne y}
\frac{|f(x)-f(y)|}{|x-y|^\gamma}<\infty
\right\}.
\]

\begin{lemma}
\label{lem:I0-regularity}
Assume $u_0\in C_0^\gamma([0,L])$ and $v_0\in L^\infty(0,L)$. Then, for
every $T>0$, there exists $C=C(T,L,u_0,v_0,\gamma)$ such that
\begin{equation}
\label{eq:I0-holder}
|I_0(t,x)-I_0(s,y)|
\le
C\bigl(|t-s|^\gamma+|x-y|^\gamma\bigr),
\qquad (t,x),(s,y)\in[0,T]\times[0,L].
\end{equation}
\end{lemma}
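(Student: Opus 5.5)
The plan is to work directly from the d'Alembert-type formula \eqref{eq:I0}. I will split $I_0=I_0^{(1)}+I_0^{(2)}$, where
\[
I_0^{(1)}(t,x)=\tfrac12\bigl(u_0^{\mathrm{op}}(x+t)+u_0^{\mathrm{op}}(x-t)\bigr),
\qquad
I_0^{(2)}(t,x)=\tfrac12\int_{x-t}^{x+t}v_0^{\mathrm{op}}(r)\,dr,
\]
and bound each increment separately. In both cases the arguments stay in the bounded set $[-T,L+T]$.

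\textbf{The $u_0$ term.} First I will show that $u_0^{\mathrm{op}}$ is globally $\gamma$-H\"older on $\mathbb R$, with constant $2[u_0]_\gamma$ (or some fixed multiple of it), where $[u_0]_\gamma$ is the H\"older seminorm on $[0,L]$.

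For the odd extension to $[-L,L]$, take $x\in[-L,0]$ and $y\in[0,L]$. The condition $u_0(0)=0$ gives
\[
|u_0^{\mathrm{op}}(x)-u_0^{\mathrm{op}}(y)|\le |u_0(-x)-u_0(0)|+|u_0(0)-u_0(y)|\le [u_0]_\gamma(|x|^\gamma+|y|^\gamma)\le 2[u_0]_\gamma|x-y|^\gamma .
\]
Passing to the $2L$-periodic extension is consistent because $u_0(L)=0$ makes the odd extension vanish at $\pm L$. Pairs straddling a multiple of $L$ are handled by inserting that point, exactly as above. If $|x-y|\ge L$, I simply use the bound $2\|u_0\|_\infty\le 2\|u_0\|_\infty L^{-\gamma}|x-y|^\gamma$.

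With this, each of $u_0^{\mathrm{op}}(x\pm t)-u_0^{\mathrm{op}}(y\pm s)$ is bounded by $C|x-y\pm(t-s)|^\gamma\le C(|t-s|^\gamma+|x-y|^\gamma)$, using subadditivity of $r\mapsto r^\gamma$ for $\gamma\le1$.

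\textbf{The $v_0$ term.} Since $\|v_0^{\mathrm{op}}\|_\infty=\|v_0\|_{L^\infty}$, the difference of the two integrals is an integral over the symmetric difference of $[x-t,x+t]$ and $[y-s,y+s]$. That set has length at most $2(|t-s|+|x-y|)$. This yields the Lipschitz bound
\[
|I_0^{(2)}(t,x)-I_0^{(2)}(s,y)|\le \|v_0\|_{L^\infty}(|t-s|+|x-y|).
\]
Because all increments satisfy $|t-s|\le T$ and $|x-y|\le L$, we have $|t-s|\le T^{1-\gamma}|t-s|^\gamma$ and $|x-y|\le L^{1-\gamma}|x-y|^\gamma$. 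So the Lipschitz bound converts into the H\"older form \eqref{eq:I0-holder}.

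\textbf{Main obstacle.} The only genuinely delicate point is the H\"older continuity of the odd periodic extension across the reflection points $mL$. This is exactly where the boundary conditions $u_0(0)=u_0(L)=0$ built into $C_0^\gamma([0,L])$ are needed, and the argument above handles it by splitting at the nearest reflection point.
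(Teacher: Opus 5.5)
Your proposal is correct and follows essentially the same route as the paper. Both split $I_0$ into the $u_0^{\mathrm{op}}$ part, which is jointly $\gamma$-H\"older, and the $v_0^{\mathrm{op}}$ part, which is jointly Lipschitz; the paper writes the latter via the antiderivative $F(r)=\int_0^r v_0^{\mathrm{op}}(q)\,dq$, which is equivalent to your symmetric-difference bound, and both then absorb the Lipschitz part into the H\"older bound on the compact cylinder. Your explicit check that the odd $2L$-periodic extension is H\"older across the reflection points, using $u_0(0)=u_0(L)=0$, fills in a step the paper only asserts.
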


\begin{proof}
The odd $2L$-periodic extension $u_0^{\mathrm{op}}$ is $\gamma$-H\"older
on bounded intervals. Since $v_0\in L^\infty(0,L)$, its odd $2L$-periodic extension
$v_0^{\mathrm{op}}$ belongs to $L^\infty(\mathbb R)$. Hence
\[
F(r):=\int_0^r v_0^{\mathrm{op}}(q)\,dq
\]
is globally Lipschitz, with
\[
|F(r)-F(r')|\le
\|v_0\|_{L^\infty(0,L)}|r-r'|,
\qquad r,r'\in\mathbb R.
\]
Formula \eqref{eq:I0} can be written as
\[
I_0(t,x)
=
\frac12\left(u_0^{\mathrm{op}}(x+t)+u_0^{\mathrm{op}}(x-t)\right)
+
\frac12\bigl(F(x+t)-F(x-t)\bigr).
\]
The first term is jointly $\gamma$-H\"older in $(t,x)$, while the second is
jointly Lipschitz. On a compact cylinder, every Lipschitz function is also
$\gamma$-H\"older for $0<\gamma\le1$, which proves \eqref{eq:I0-holder}.
\end{proof}

\begin{theorem}
\label{thm:wave-moment}
Assume \eqref{eq:Lipschitz-ab},
$u_0\in C_0^\gamma([0,L])$ and $v_0\in L^\infty(0,L)$ for some
$\gamma\in(0,1]$, and let $u$ be the global mild solution of
\eqref{eq:nswe}. Set
\[
\theta:=\gamma\wedge\frac12.
\]
Then, for every $T>0$ and $p\ge2$, there is a constant
$C=C(p,T,L,a,b,u_0,v_0,\overline\sigma)<\infty$ such that
\begin{equation}
\label{eq:u-joint-moment}
\hat{\mathbb E}
\left[|u(t,x)-u(s,y)|^p\right]
\le
C
\left(
|t-s|^\theta+|x-y|^\theta
\right)^p
\end{equation}
for all $(t,x),(s,y)\in[0,T]\times[0,L]$.
\end{theorem}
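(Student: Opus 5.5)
The plan is to use the mild formulation $u=I_0+\mathcal D(b\circ u)+\mathcal S(a\circ u)$ on $[0,T]\times[0,L]$ and bound the increment of each of the three terms separately in $\mathbf L_G^p$. By sub-additivity and the elementary inequality $|\alpha+\beta+\gamma|^p\le 3^{p-1}(|\alpha|^p+|\beta|^p+|\gamma|^p)$, we have
\[
\hat{\mathbb E}\left[|u(t,x)-u(s,y)|^p\right]
\le 3^{p-1}\Bigl(|I_0(t,x)-I_0(s,y)|^p+\hat{\mathbb E}\bigl[|\Delta\mathcal D|^p\bigr]+\hat{\mathbb E}\bigl[|\Delta\mathcal S|^p\bigr]\Bigr),
\]
where $\Delta\mathcal D$ and $\Delta\mathcal S$ denote the increments of $\mathcal D(b\circ u)$ and $\mathcal S(a\circ u)$ between $(s,y)$ and $(t,x)$.

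The key preliminary is that $b\circ u$ and $a\circ u$ belong to $\mathbf S_G^p([0,T]\times[0,L])$ with finite norms. This follows from Theorem~\ref{th:main}, which gives $u\in\mathbf S_G^p([0,T]\times[0,L])$ for every $p\ge2$. Combined with the linear growth $|a(r)|+|b(r)|\le C(1+|r|)$, it yields
\[
\|a\circ u\|_{\mathbf S^p}+\|b\circ u\|_{\mathbf S^p}\le C\bigl(1+\|u\|_{\mathbf S^p}\bigr)<\infty .
\]
Membership of $a\circ u$ and $b\circ u$ in $\mathbf S_G^p$ itself, and not merely finiteness of the sup-norm, holds because composing the approximating simple fields with Lipschitz functions stays simple and converges in $\mathbf S^p$. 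This point was already used in the proof of Theorem~\ref{th:main}.

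Next I bound the three increments.
\begin{itemize}
\item For the stochastic convolution, Proposition~\ref{prop:wave-convolution}, estimate \eqref{eq:S-joint-inc}, gives
\[
\hat{\mathbb E}\bigl[|\Delta\mathcal S|^p\bigr]\le C\|a\circ u\|_{\mathbf S^p}^p\bigl(|t-s|^{1/2}+|x-y|^{1/2}\bigr)^p .
\]
\item For the drift, estimate \eqref{eq:D-joint-inc} gives
\[
\hat{\mathbb E}\bigl[|\Delta\mathcal D|^p\bigr]\le C\|b\circ u\|_{\mathbf S^p}^p\bigl(|t-s|+|x-y|\bigr)^p .
\]
\item The deterministic part is controlled by Lemma~\ref{lem:I0-regularity}:
\[
|I_0(t,x)-I_0(s,y)|^p\le C\bigl(|t-s|^\gamma+|x-y|^\gamma\bigr)^p .
\]
\end{itemize}

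Finally, all exponents must be reduced to $\theta=\gamma\wedge\frac12$. Since $|t-s|\le T$ and $|x-y|\le L$ on the compact cylinder, for any exponent $\beta\ge\theta$ we have
\[
|t-s|^\beta\le T^{\beta-\theta}|t-s|^\theta,\qquad |x-y|^\beta\le L^{\beta-\theta}|x-y|^\theta .
\]
I apply this with $\beta\in\{1,\tfrac12,\gamma\}$. Summing the three bounds then gives \eqref{eq:u-joint-moment}, with a constant depending on $p,T,L,a,b,u_0,v_0,\overline\sigma$ through $\|u\|_{\mathbf S^p}$, the Lipschitz constants of $a,b$, and the constants in Lemma~\ref{lem:I0-regularity} and Proposition~\ref{prop:wave-convolution}.

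The main obstacle is essentially bookkeeping, namely verifying that Proposition~\ref{prop:wave-convolution} applies directly to $\eta=a\circ u$ and $\eta=b\circ u$. This rests on the $\mathbf S_G^p$ membership discussed above, so the main care is in confirming that step rather than in any new estimate.
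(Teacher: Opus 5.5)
Your proposal is correct and follows the paper's proof essentially step for step. Both use the mild decomposition $u=I_0+\mathcal D(b\circ u)+\mathcal S(a\circ u)$, Lemma~\ref{lem:I0-regularity} for $I_0$, and the increment bounds \eqref{eq:S-joint-inc} and \eqref{eq:D-joint-inc} applied to $a\circ u,\,b\circ u\in\mathbf S_G^p$ via Theorem~\ref{th:main}, then reduce all exponents to $\theta$ on the compact cylinder. The only cosmetic difference is that you combine the three terms with the factor $3^{p-1}$ on $p$th moments, whereas the paper applies the triangle inequality in $\mathbf L_G^p$ before raising to the $p$th power.
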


\begin{proof}
By Theorem~\ref{th:main},
$u\in\mathbf S_G^p([0,T]\times[0,L])$ for every $p\ge2$. Since $a$ and $b$ are globally Lipschitz, composition with $a$ and $b$
preserves $\mathbf S_G^p([0,T]\times[0,L])$. Hence
$a\circ u,\ b\circ u
\in\mathbf S_G^p([0,T]\times[0,L])$.
From \eqref{eq:nswe-df},
Lemma~\ref{lem:I0-regularity}, and
Proposition~\ref{prop:wave-convolution},
\begin{align*}
\|u(t,x)-u(s,y)\|_{\mathbf L_G^p}
&\le
C\bigl(|t-s|^\gamma+|x-y|^\gamma\bigr)
+C\bigl(|t-s|+|x-y|\bigr)\\
&\quad+
C\bigl(|t-s|^{1/2}+|x-y|^{1/2}\bigr).
\end{align*}
On the compact cylinder, the right-hand side  is bounded by
$C(|t-s|^\theta+|x-y|^\theta)$, and raising to the $p$th power proves
\eqref{eq:u-joint-moment}.
\end{proof}

\begin{theorem}
\label{thm:Holder}
Under the assumptions of Theorem~\ref{thm:wave-moment}, the global mild
solution admits a modification $\widetilde u$ whose paths are locally
H\"older continuous on $[0,\infty)\times[0,L]$ quasi-surely. More precisely:
\begin{enumerate}
\item[(i)] If $\gamma\ge\tfrac12$, then, quasi-surely, on every compact
cylinder $[0,T]\times[0,L]$ the paths of $\widetilde u$ are jointly
H\"older continuous of every order $\alpha\in(0,\tfrac12)$. 
\item[(ii)] If $0<\gamma<\tfrac12$, then, quasi-surely, the paths of
$\widetilde u$ are jointly $\gamma$-H\"older continuous on every compact
cylinder. 
\end{enumerate}
\end{theorem}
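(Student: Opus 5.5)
The plan is to combine the moment estimate of Theorem~\ref{thm:wave-moment} with a Kolmogorov continuity criterion under the capacity $c$. That criterion is proved by transferring the classical dyadic chaining argument to the upper expectation $\bar{\mathbb E}$.

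Fix $T>0$ and work first on the cylinder $Q_T:=[0,T]\times[0,L]$, a two-parameter domain. Theorem~\ref{thm:wave-moment} gives
\[
\hat{\mathbb E}[|u(t,x)-u(s,y)|^p]\le C_p\,|(t,x)-(s,y)|^{\theta p}
\]
for every $p\ge2$, with $\theta=\gamma\wedge\frac12$. Since the dimension is $d=2$, we need $\theta p>2$, so $p$ must exceed $2/\theta$. The Kolmogorov argument then yields Hölder continuity of every order $\alpha<(\theta p-2)/p=\theta-2/p$. Letting $p\to\infty$ along integers shows that every $\alpha<\theta$ is admissible.

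The criterion itself runs as follows. Let $D_n$ be the dyadic grid of mesh $2^{-n}$ in $Q_T$ (rescaled), and set
\[
M_n:=\max\bigl\{|u(\xi)-u(\xi')|:\ \xi,\xi'\in D_n \text{ adjacent}\bigr\}.
\]
There are $O(4^n)$ adjacent pairs, and each $u(\xi)$ is a quasi-continuous element of $\mathbf L_G^p$, so $M_n$ is Borel. Sub-additivity of $c$ together with Markov's inequality under each $P\in\mathcal P$ gives
\[
c(M_n>2^{-n\alpha})\le C\,4^n\,2^{n\alpha p}\,2^{-n\theta p}.
\]
This is summable when $\alpha<\theta-2/p$. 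The Borel--Cantelli lemma under capacity (Lemma 6.1.9 in \cite{P2019}, already used in Theorem~\ref{th:BDG}) then shows that, outside a polar set, $M_n\le 2^{-n\alpha}$ for all large $n$. The standard chaining argument, which is purely deterministic, gives uniform $\alpha$-Hölder continuity of $u$ restricted to the countable dyadic set $D=\bigcup_n D_n$. We define $\widetilde u$ as the continuous extension of this restriction off the polar set, and $\widetilde u:=0$ on the polar set.

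To check that $\widetilde u$ is a modification, fix $(t,x)$ and pick dyadic points $\xi_k\to(t,x)$. The moment estimate gives $u(\xi_k)\to u(t,x)$ in $\mathbf L_G^p$, hence in capacity, while $u(\xi_k)\to\widetilde u(t,x)$ quasi-surely. Passing to a subsequence converging quasi-surely (again via capacity Borel--Cantelli) shows $\widetilde u(t,x)=u(t,x)$ q.s.

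For global consistency, take $T=N\in\mathbb N$. The modifications on $Q_N$ and $Q_{N+1}$ agree on dyadic points q.s., hence agree everywhere on $Q_N$ by continuity. Taking the countable union of the polar exceptional sets gives a single $\widetilde u$ on $[0,\infty)\times[0,L]$.

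The cases then follow from the range of $\alpha$:
\begin{enumerate}
\item[(i)] If $\gamma\ge\frac12$, then $\theta=\frac12$. Every $\alpha<\frac12$ is obtained by choosing $p$ large, and the countably many exceptional sets for $\alpha_k\uparrow\frac12$ are again polar.
\item[(ii)] If $\gamma<\frac12$, the argument above only gives every $\alpha<\gamma$. Reaching the endpoint $\gamma$ requires more.
\end{enumerate}

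The main obstacle is case (ii), attaining the exact exponent $\gamma$, since Kolmogorov's criterion always loses an $\epsilon$. The approach is to split
\[
\widetilde u=I_0+\widetilde{\mathcal D(b\circ u)}+\widetilde{\mathcal S(a\circ u)}.
\]
Lemma~\ref{lem:I0-regularity} shows that $I_0$ is deterministic and exactly $\gamma$-Hölder. By Proposition~\ref{prop:wave-convolution}, the two convolution terms satisfy moment bounds of order $\frac12$ and $1$ respectively. The Kolmogorov argument applied to each of them separately yields Hölder continuity of some order $\alpha\in(\gamma,\frac12)$, which on a compact cylinder implies $\gamma$-Hölder continuity. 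Summing the three terms gives exact $\gamma$-Hölder paths quasi-surely. One has to verify that the modifications of the separate terms add up to a modification of $u$; this holds pointwise q.s. by the mild equation~\eqref{eq:nswe-df}, and then on the whole cylinder by continuity.
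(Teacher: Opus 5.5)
Your proposal is correct and follows essentially the same route as the paper: Kolmogorov's criterion under capacity with $p$ large for (i), and for (ii) the splitting $u=I_0+\mathcal D(b\circ u)+\mathcal S(a\circ u)$, with Kolmogorov applied separately to the two convolution terms to reach the endpoint $\gamma$, followed by patching over the cylinders $[0,n]\times[0,L]$. The only difference is that you derive the capacity Kolmogorov criterion yourself via dyadic chaining and the capacity Borel--Cantelli lemma, whereas the paper cites Peng's Theorem 6.1.40.
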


\begin{proof}
Fix $T>0$. Suppose first that $\gamma\ge1/2$. Let $\alpha<1/2$ and choose
$p>4$ so large that
$\alpha<1/2-2/p$. By \eqref{eq:u-joint-moment},
\[
\hat{\mathbb E}[|u(t,x)-u(s,y)|^p]
\le
C(|t-s|+|x-y|)^{p/2}.
\]
Applying Kolmogorov  criterion under sublinear expectation \cite[Theorem 6.1.40]{P2019} with
$2+\varepsilon=p/2$ gives an $\alpha$-H\"older modification on
$[0,T]\times[0,L]$. Taking a countable sequence
$\alpha_k\uparrow1/2$ and identifying the corresponding continuous
modifications on the countable dense set of rational points yields one
modification possessing every exponent $\alpha<1/2$ outside a single polar
set.

Now suppose $0<\gamma<1/2$. Write
\[
u=I_0+V+Z,
\qquad
V:=\mathcal D(b\circ u),
\qquad
Z:=\mathcal S(a\circ u).
\]
By Lemma~\ref{lem:I0-regularity}, $I_0$ is $\gamma$-H\"older. Choose
$\beta_Z$ with $\gamma<\beta_Z<1/2$. By \eqref{eq:S-joint-inc},
\[
\hat{\mathbb E}[|Z(t,x)-Z(s,y)|^p]
\le
C_p(|t-s|+|x-y|)^{p/2}.
\]
Choose $p$ large enough that $\beta_Z<1/2-2/p$. Then the Kolmogorov criterion
gives a $\beta_Z$-H\"older modification
of $Z$. Similarly, choose $\beta_V$ with $\gamma<\beta_V<1$. From
\eqref{eq:D-joint-inc},
\[
\hat{\mathbb E}[|V(t,x)-V(s,y)|^p]
\le
C_p(|t-s|+|x-y|)^p.
\]
Taking $p$ sufficiently large that $\beta_V<1-2/p$ and applying
the Kolmogorov criterion gives a $\beta_V$-H\"older modification
of $V$. Hence
\[
\widetilde u:=I_0+\widetilde V+\widetilde Z
\]
is a modification of $u$ and is jointly $\gamma$-H\"older on the compact
cylinder, because $\beta_Z,\beta_V>\gamma$. This is why the endpoint
$\gamma$ is attained when $\gamma<1/2$.

Finally apply the preceding construction on
$[0,n]\times[0,L]$, $n\in\mathbb N$. On each overlap, the modifications for
horizons $n$ and $n+1$ agree quasi-surely at every rational point because both
are modifications of the same random field. Removing the countable union of
the corresponding polar sets and using continuity shows that they agree on the
whole overlap. Hence the finite-horizon modifications patch together to a
single global modification $\widetilde u$ with the asserted local H\"older
properties.
\end{proof}

\end{document}